\theoremstyle{plain}
\newtheorem{theorem}{Theorem}[section]
\newtheorem{corollary}[theorem]{Corollary}
\newtheorem{theoremletter}{Theorem}
\newtheorem{corollaryletter}[theoremletter]{Corollary}
\newtheorem{propositionletter}[theoremletter]{Proposition}
\newtheorem{lemma}[theorem]{Lemma}
\newtheorem{proposition}[theorem]{Proposition}
\newtheorem{claim}[theorem]{Claim}
\theoremstyle{definition}
\newtheorem{definition}[theorem]{Definition}
\newtheorem{example}[theorem]{Example}
\newtheorem{question}[theorem]{Question}
\newtheorem{remark}[theorem]{Remark}
\newtheorem{fact}[theorem]{Fact}
\newenvironment{cproof}{\begin{proof}[Proof of the
		claim]}{\end{proof}}
\numberwithin{equation}{section}
\newcommand{\field}{\mathbb{f}}
\newcommand{\R}{\mathbb{R}}
\newcommand{\Z}{\mathbb{Z}}	
\newcommand{\N}{\mathbb{N}}	
\newcommand{\fsym}[1]{\mathsf{FSym}(#1)}
\newcommand{\sym}[1]{\mathsf{Sym}(#1)}
\newcommand{\shuf}[1]{\mathsf{Shuffler}(#1)}
\newcommand{\shufn}[2]{\mathsf{Shuffler}^{\circ {#1}}(#2)}
\newcommand{\juggler}[2]{\mathsf{Shuffler}_{#1}(#2)}
\newcommand{\jugglern}[3]{\mathsf{Shuffler}_{#2}^{\circ {#1}}(#3)}
\newcommand{\cloner}[1]{\mathsf{Cloner}_{\field}(#1)}
\newcommand{\clonern}[2]{\mathsf{Cloner}_{\field}^{\circ {#1}}(#2)}
\newcommand{\designer}[1]{\mathsf{Designer}_{F}(#1)}
\newcommand{\designern}[2]{\mathsf{Designer}_{F}^{\circ {#1}}(#2)}
\newcommand{\upcloner}[1]{\mathsf{Upcloner}_{\field}(#1)}
\newcommand{\fol}[1]{\textup{F\o l}_{#1}} 
\newcommand{\supp}{\mathrm{supp}\ } 
\newcommand{\prof}[1]{j_{1,#1}}
\newcommand{\profp}[1]{j_{p,#1}}
\newcommand{\folp}[1]{\textup{F\o l}_{p,#1}}
\newcommand{\halo}{\mathscr{L}}
\newcommand*{\defeq}{\mathrel{\rlap{%
                     \raisebox{0.3ex}{$\m@th\cdot$}}%
                     \raisebox{-0.3ex}{$\m@th\cdot$}}%
                     =}
\begin{document}

\begin{titlepage}
\setcounter{page}{1}
\title{Isoperimetric profiles of lamplighter-like groups}
\author{\small{Corentin Correia and Vincent Dumoncel}}

\date{\today}
\maketitle

\begin{abstract}
Given a finitely generated amenable group $H$ satisfying some mild assumptions, we relate isoperimetric profiles of the lampshuffler group $\shuf{H}=\fsym{H}\rtimes H$ to those of $H$. Our results are sharp for all exponential growth groups for which isoperimetric profiles are known, including Brieussel-Zheng groups. This refines previous estimates obtained by Erschler and Zheng and by Saloff-Coste and Zheng. \par
The most difficult part is to find an optimal upper bound, and our strategy consists in finding suitable lamplighter subgraphs in lampshufflers. This novelty applies more generally for many examples of \textit{halo products}, a class of groups introduced recently by Genevois and Tessera as a natural generalisation of wreath products.\par Lastly, we also give applications of our estimates on isoperimetric profiles to the existence problem of regular maps between such groups.
\end{abstract}

\smallskip

{
		\small	
		\noindent\textbf{{Keywords:}}  Halo products, isoperimetric profile, quasi-isometry, regular maps. 
	}
	
	\smallskip
	
	{
		\small	
		\noindent\textbf{{MSC-classification:}}	
		Primary 20F65; Secondary 20F69.
	}


\tableofcontents

\section{Introduction}

It is a recurrent theme in geometric group theory to understand the collection of all maps between two given finitely generated groups that are compatible with their large-scale geometries. Such collections include for instance quasi-isometries, coarse embeddings, and more generally regular maps. The motivation behind this program is that the large-scale geometry of a group is in fact deeply related to its algebraic structure. 

\smallskip

\sloppy Several milestones have been achieved in the study of the quasi-isometries of many classes of groups, among which abelian free groups, lamplighters over virtually cyclic groups and other $\text{SOL}$-like groups~\cite{EFW12, EFW13}, Baumslag-Solitar groups~\cite{FM98, FM99, Why01}, or lamplighters over one-ended groups~\cite{GT24b}.

\smallskip

If one rather wants to exclude the existence of such maps between some spaces, an efficient strategy is to use invariants, or even monotonuous quantities, that are computable in practice. Numerous invariants have been introduced for quasi-isometries, including 
 the isoperimetric profiles which are monotonuous under regular maps, see~\cite[Theorem~5.5]{DKLMT22}. We refer the reader to Section~\ref{sec:appQIandRegularMaps} for other examples of invariants (asymptotic dimension, volume growth).

\smallskip

The aim of the paper is to compute isoperimetric profiles of lampshufflers, and more generally of the so-called \textit{halo products} introduced in~\cite{GT24a}. 

\paragraph{Isoperimetric profiles and F\o lner functions.} For a finitely generated group $G$ with a finite generating set $S_{G}$, and given $p\geq 1$, its $\ell^p$-\textit{isoperimetric profile} is the function $\profp{G}\colon \N\longrightarrow\R_{+}$ given by 
\begin{equation*}
    \profp{G}(n) \defeq\sup_{\substack{f\colon G\to\R_+\\|\supp{f}|\leq n}}{\frac{\|f\|_p}{\|\nabla f\|_p}}
\end{equation*}
where the support of $f\colon G\longrightarrow\R_+$ is $\supp{f}\defeq\lbrace g\in G : f(g)\neq 0\rbrace$ and the $\ell^p$-norm of its gradient is defined by 
\begin{equation*}
   \|\nabla f\|_{p}^{p}\defeq\sum_{g\in G,\; s\in S_{G}}\left|f(g)-f(gs)\right|^p. 
\end{equation*}

\begin{remark}\label{rem:ConventionProfile}
    We warn the reader that many authors introduce the $\ell^p$-isoperimetric profile with $\|\cdot\|_p^p$ instead of $\|\cdot\|_p$ in the definition, so their $\ell^p$-isoperimetric profile is $\profp{G}(x)^p$ with our conventions.
\end{remark}

The $\ell^p$-isoperimetric profile of a group $G$ is the generalised inverse of its $\ell^p$-\textit{F\o lner function} $\folp{G}\colon\N\longrightarrow \R_{+}$, defined as
 \begin{equation*}
     \folp{G}(n) \defeq \inf\left\lbrace |\supp{f}| : \frac{\|\nabla f\|_p}{\|f\|_p} \le \frac{1}{n} \right\rbrace.
 \end{equation*}

In the case $p=1$, these functions are simply called \textit{isoperimetric profile} and \textit{F\o lner function}, and have a simpler definition (up to asymptotic behaviour), namely
\begin{equation*}
    \prof{G}(n) \defeq \sup_{|A|\le n}\frac{|A|}{|\partial_{G} A|}\;\text{ and }\;\fol{G}(n) \defeq \inf\left\lbrace |A| : \frac{|\partial_{G}A|}{|A|} \le \frac{1}{n} \right\rbrace,
\end{equation*}
where $\partial_{G}A \defeq AS_{G}\setminus A=\lbrace g\in G\setminus A : \exists s\in S_{G}, \exists h\in A, g=hs\rbrace$ is the boundary of $A$ in $G$. Note that we only find the $\ell^1$-F\o lner function in the literature. In this paper we introduce the more general $\ell^p$ versions for $p\ge 1$.

Without loss of generality, we may and do assume that the $\ell^p$-isoperimetric profile and the $\ell^p$-F\o lner function are real inverses of each other, and not only generalised inverses; see Remark~\ref{rem:InverseEachOther}.

\smallskip

Notice that the $\ell^p$-isoperimetric profile of a finitely generated group is bounded if and only if the group is not amenable. Therefore, we will only be interested in $\ell^p$-isoperimetric profiles of amenable groups. The asymptotic behaviour of the $\ell^p$-isoperimetric profile is, somehow, a measurement of its amenability; the faster it goes to infinity, the "more amenable" the group is.

\smallskip

Among amenable groups, the $\ell^p$-isoperimetric profile (or equivalently the $\ell^p$-F\o lner function) has been computed for many finitely generated groups. Given $p\ge 1$, we have for instance:
\begin{itemize}
    \item $\profp{G}(n) \simeq n^{\frac{1}{d}}$ if $G$ has polynomial growth of degree $d\ge 1$;
    \item $\profp{G}(n)\simeq \ln(n)$ for $G=\text{BS}(1,k)$ for $k\ge 2$, or $G=F\wr\Z$, where $F$ is a non-trivial finite group;
    \item $\profp{G}(n) \simeq \ln(n)$ for any polycyclic group $G$ with exponential growth~\cite{Pit95, Pit00}, or more generally any exponential growth group within the class GES of Tessera~\cite[Corollary 5]{Tes13};
    \item $\profp{F\wr N}(n)\simeq (\ln(n))^{\frac{1}{d}}$ with $F$ finite, and $N$ having polynomial growth of degree $d\ge 1$~\cite{Ers03};
    \item for any non-decreasing function $f\colon \R_{+}\longrightarrow \R_{+}$ such that $x\longmapsto \frac{x}{f(x)}$ is non-decreasing, Brieussel and Zheng constructed in~\cite[Theorem~1.1]{BZ21} a finitely generated group $H$ with exponential volume growth having isoperimetric profile $\profp{H}(x)\simeq \frac{\ln(x)}{f(\ln(x))}$; we will refer to such a group as a Brieussel-Zheng's group. 
\end{itemize}

In fact, isoperimetric profiles and F\o lner functions have been studied in the more general framework of bounded degree graphs; see Section~\ref{sec:preliminaries} for details. For now, let us simply mention that, in this setup, Erschler's estimates for $\ell^1$-F\o lner functions of lamplighter graphs~\cite{Ers03,Ers06}  will be a key ingredient in our strategy (see Section~\ref{sec:secondstrategy}).

\smallskip

It is a well-known fact that if $p>q$, then $\profp{G}(x)\preccurlyeq j_{q,G}(x)$, see~Lemma~\ref{lem:MonotonuousProfile}. Moreover, a stronger phenomenon is conjectured: $\profp{G}(x)\simeq j_{q,G}(x)$ for all $p,q\ge 1$.

\paragraph{Lampshuffler groups.} As a starting point of our work, let us first focus on \textit{lampshuffler groups}. Later in the introduction, we will present the more general notion of halo products, constructed in a similar way to wreath products (as lampshufflers). 

\smallskip

Given a group $H$, the \textit{lampshuffler group} over $H$ is the semi-direct product 
\begin{equation*}
    \shuf{H} \defeq \fsym{H}\rtimes H
\end{equation*}
where $\fsym{H}$ is the group of finitely supported bijections $H\longrightarrow H$, and where $H$ acts on the latter as $(h\cdot\sigma)(x)\defeq h\sigma(h^{-1}x)$. These groups already appeared several times in the literature, in relations with many topics of interest in group theory, see for instance~\cite{Yad09, HO16, BZ19, EZ21, SCZ21, GT24a, Sil24}.

\smallskip

Let us first present what is known about profiles of lampshufflers and our results for this class of groups.

\paragraph{Isoperimetric profiles of lampshufflers.} In~\cite[Theorem~6.8]{SCZ21}, Saloff-Coste and Zheng establish a general lower bound on $\profp{\shuf{H}}$ for a finitely generated group $H$, of the form 
\begin{equation*}
    \profp{H}\left(\frac{\ln(x)}{\ln(\ln(x))}\right) \preccurlyeq \profp{\shuf{H}}(x).
\end{equation*}
Their proof will be useful since we will present a natural generalisation for halo products, see the lower bound in Corollary~\ref{cor:ProfHalo intro}.

\smallskip

From~\cite[Theorem~6.7]{SCZ21}, we also know upper bounds on $\profp{\shuf{H}}$ for general groups $H$, and~\cite[Corollary~1.4]{EZ21} provides a lower bound on $\textup{F\o l}_{1,\shuf{H}}$, equivalently an upper bound on $\prof{\shuf{H}}$.

\begin{theorem}[{\cite[Theorem~6.7]{SCZ21}}]
Let $H$ be a finitely generated group, with a finite generating set $S_{H}$. For $p=1,\; 2$, we have
\begin{equation*}
    \profp{\shuf{H}}(x)\preccurlyeq V_{H,S_{H}}^{-1}\left(\frac{\ln(x)}{\ln(\ln(x))}\right).
\end{equation*}
\end{theorem}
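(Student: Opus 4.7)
The plan is to prove the equivalent lower bound on the F\o lner function: any non-empty finite $A \subseteq \shuf{H}$ satisfying $|\partial_{\shuf{H}} A|/|A| \leq 1/n$ must have $|A| \gtrsim V_H(n)!$. Since $\ln(V_H(n)!) \simeq V_H(n) \ln V_H(n)$, inverting this inequality yields $\profp{\shuf{H}}(x) \preccurlyeq V_H^{-1}(\ln x /\ln \ln x)$.

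The starting point is to decompose $A$ along the projection $\pi \colon \shuf{H} \to H$, $(\sigma, h) \mapsto h$: write $A = \bigsqcup_{h \in \pi(A)}\{h\} \times A_h$ with $A_h \subseteq \fsym{H}$. Choosing the natural generating set $S_H \cup \{\tau_s : s \in S_H\}$ of $\shuf{H}$, where $\tau_s$ is the element with $\fsym{H}$-component the transposition $(e, s)$ and trivial $H$-component, a direct computation of right multiplications gives
\begin{equation*}
    \sum_{s \in S_H \cup \{\tau_s\}} |As \setminus A| = \sum_{s \in S_H}\sum_{h \in H} |A_h \setminus A_{hs}| + \sum_{s \in S_H}\sum_{h \in H} |A_h \cdot (h, hs) \setminus A_h|.
\end{equation*}
The first sum encodes the stability of the family $(A_h)_h$ under $H$-shifts, while the second encodes the near-invariance of each fiber $A_h$ under right multiplication by the transposition $(h, hs) \in \fsym{H}$, for every $s \in S_H$.

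The key propagation step combines both: iterating along any word of length at most $n/C$ in $S_H^{\pm}$ (with $C$ a large absolute constant), I would deduce that some distinguished fiber $A_{h_0}$ is approximately invariant under every transposition $(h', h's)$ with $h' \in B_H(h_0, n/C)$ and $s \in S_H$. These transpositions generate inside $\fsym{H}$ the finite symmetric group on $B_H(h_0, n/C+1)$, whose cardinality is at least $V_H(n/C)!$. An isoperimetric estimate in this finite symmetric group (for $p=1$) or a Poincar\'e inequality for the adjacent-transposition walk on $S_N$ (for $p=2$) then forces $|A_{h_0}| \gtrsim V_H(n/C)!$. Since $C$ is an absolute constant and $\ln V_H(n/C)! \simeq V_H(n) \ln V_H(n)$ up to constants absorbed by $\preccurlyeq$, this delivers the claimed bound on $|A|$.

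The main obstacle is controlling the cumulative approximation error in the propagation: each shift step $h \rightsquigarrow hs$ degrades the invariance by an amount of order $1/n$, so iterating over a word of length $n/C$ accumulates errors that must remain below the Cheeger constant of the Cayley graph of $\fsym{B_H(h_0, n/C)}$ with adjacent transpositions for the isoperimetric step to conclude. Calibrating $C$ in terms of this Cheeger constant, together with standard Erschler-type additive bookkeeping of errors, is the delicate point of the argument, and governs the ``$\ln \ln$'' correction in the target scale $\ln x / \ln \ln x$.
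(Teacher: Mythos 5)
The paper does not actually prove this statement: it is quoted verbatim from \cite[Theorem~6.7]{SCZ21} as background. The paper's own upper bounds on $\profp{\shuf{H}}$ (Theorem~\ref{th:boundsForProfile intro}, Corollary~\ref{cor:upperboundonProf}) are derived by a different route entirely --- finding a lamplighter \emph{subgraph} $L(\lbrace 1_H,s_0\rbrace)\wr X_0$ inside $\shuf{H}$ (Theorem~\ref{th:lowerboundonFol}) and invoking Erschler's lower bound on F\o lner functions of lamplighter graphs --- not by the fiberwise boundary estimate you are sketching, which is closer in spirit to~\cite{EZ21}.

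Assessed on its own terms, your sketch has a genuine gap exactly at the point you flag. The boundary condition gives only the $\ell^1$-in-$h$ bound $\sum_{h,s}\left(|A_h\setminus A_{hs}|+|A_h\tau_{h,hs}\setminus A_h|\right)\le |A|/n$, and to control $|A_{h_0}\,\triangle\,A_{h_0}\tau_{h,hs}|$ for all $h$ in $B_H(h_0,r)$ you must pay the drift $|A_{h_0}\triangle A_h|$ along a path of length up to $r$; summed over the ball this accumulates an error of order $r|A|/n$. You then want this to be beaten by the Cheeger constant $\mathsf{h}$ of $\mathsf{Sym}\bigl(B_H(h_0,r)\bigr)$ with the generating set $\lbrace\tau_{h,hs}\rbrace$ of graph-adjacent transpositions. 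But that Cheeger constant is \emph{not} a universal constant: by the Aldous spectral-gap theorem the interchange process on a ball of radius $r$ has gap $\Theta(1/r^2)$, so $\mathsf{h}\le\sqrt{2\lambda}\lesssim 1/r$. The comparison $r|A|/n\lesssim \mathsf{h}\,|A_{h_0}|\le \mathsf{h}\,|A|$ then forces $r$ to be a power of $n$ strictly below $1$ (roughly $r\lesssim n^{1/3}$ for $H=\Z$), yielding only $|A|\gtrsim V_H(n^{1/3})!$ rather than $V_H(n)!$; after inversion this loses a power in the exponent of the profile bound. Your remark that $C$ should be ``calibrated in terms of this Cheeger constant'' is internally inconsistent with ``$C$ a large absolute constant,'' since that Cheeger constant shrinks with $n$. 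Closing this gap requires a substantively different counting (the entropy-style arguments of~\cite{Ers03,Ers06,EZ21}, or the analytic methods of~\cite{SCZ21}), not ``standard additive bookkeeping.''
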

Here, $V_{H,S_{H}}$ refers to the growth function of $H$ with respect to the finite generating set $S_{H}$. Note that the statement in their paper appears with an exponent $p$, since we do not use the same convention; see Remark~\ref{rem:ConventionProfile}.

\begin{theorem}[{\cite[Corollary~1.4]{EZ21}}]
Let $H$ be a finitely generated group, with a finite generating set $S_H$. Then, we have
\begin{equation*}
    \textup{F\o l}_{1,\shuf{H}}(x)\succcurlyeq V_{H,S_H}(x)^{V_{H,S_H}(x)}.
\end{equation*}
\end{theorem}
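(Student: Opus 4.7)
The plan is to prove the lower bound on $\text{F\o l}_{1,\shuf{H}}$ by its contrapositive: show that any finite subset $A \subseteq \shuf{H}$ with $|\partial A|/|A| \le 1/x$ has cardinality of order at least $V_{H,S_{H}}(x)^{V_{H,S_{H}}(x)}$.

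The key algebraic observation is that for any finite subset $F \subseteq H$, the full symmetric group $\sym{F}$ embeds as a finite subgroup of $\fsym{H} \le \shuf{H}$. Taking $F$ to be the ball of radius $x$ in the Cayley graph $\mathrm{Cay}(H, S_{H})$, of cardinality $V_{H, S_{H}}(x)$, yields a finite subgroup of order $V_{H, S_{H}}(x)!$, which by Stirling is of order $V_{H, S_{H}}(x)^{V_{H, S_{H}}(x)}$ modulo a harmless exponential factor. The target is to show that $A$ must contain essentially one complete right coset of this finite subgroup.

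To achieve this, I would exploit the slice decomposition of $A$: for each $h \in H$, set $A_{h} \defeq \lbrace \sigma \in \fsym{H} : (\sigma, h) \in A\rbrace$. The Følner condition splits into two parts: a $\fsym{H}$-boundary bound, forcing each slice $A_{h}$ to be, on average, almost invariant under right multiplication by the transposition $(h, hs_{0})$; and an $H$-boundary bound, forcing neighbouring slices $A_{h}$ and $A_{hs}$ to have almost-equal intersections. Combining both, an iteration argument propagates the \emph{almost-invariance under transposition} property from one slice to nearby slices, leading to the conclusion that a typical slice $A_{h}$ is almost invariant under the right action of $\sym{B}$ for a suitably large ball $B$ around $h$. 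Since this action is free on $\fsym{H}$, non-empty orbits have cardinality $|\sym{B}| = |B|!$, forcing the desired lower bound on $|A|$.

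The main obstacle is the cost-bookkeeping in the iteration: each propagation step from slice $h$ to slice $h'$ via an $H$-path consumes a fraction of the boundary budget proportional to the path length, so a delicate accounting is needed to ensure that the radius $r$ of the invariance ball $B$ can be chosen comparable to $x$ itself rather than to some sublinear function of it. A naive implementation (e.g.\ decomposing each permutation of the ball as a product of adjacent transpositions and realising each via a walk in $H$) yields word-length bounds of order $x \cdot V_{H,S_{H}}(x)^{2}$ and hence only the weaker Følner lower bound $V_{H,S_{H}}(y)^{V_{H,S_{H}}(y)}$ with $y$ sublinear in $x$. Matching the sharp bound of Erschler--Zheng requires a refined analysis, likely combining the slice iteration with entropy or heat-kernel estimates for simple random walks on $\shuf{H}$.
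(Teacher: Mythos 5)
First, a remark on context: the paper does not prove this statement; it is imported verbatim from~\cite[Corollary~1.4]{EZ21} and used as a black box (e.g.\ in Proposition~\ref{prop:profileofshufnofpolynomialgrowthgroups}). So there is no internal proof to compare yours against, and your attempt must be judged on its own.

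As written, your proposal has a genuine gap, and you in fact concede it yourself in the last paragraph. The strategy --- locate the finite subgroup $K_{r}\defeq\sym{B(1_{H},r)}\leqslant\fsym{H}$ and show that a set $A$ with $|\partial A|/|A|\le 1/x$ must almost contain a full coset of $K_{r}$ with $r$ comparable to $x$ --- is the natural first idea, but the bookkeeping caps out strictly below the claimed bound. Concretely: the coset-saturation argument needs $A$ to be $\varepsilon$-almost invariant under \emph{every} element of $K_{r}$ with $\varepsilon$ bounded away from $1$, and the Følner hypothesis only gives $|A\triangle Ag|\le C\,|g|_{\Sigma_{H}}\,|A|/x$. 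Since the diameter of $K_{r}$ inside $\shuf{H}$ is of order $r\,V_{H,S_{H}}(r)$ (each of the $V_{H,S_{H}}(r)$ labels must be transported, at cost up to $r$ each), one is forced to take $r$ with $r\,V_{H,S_{H}}(r)\lesssim x$, not $r\lesssim x$. Already for $H=\Z$ this yields only $\text{F\o l}_{1,\shuf{\Z}}(x)\succcurlyeq \sqrt{x}^{\,\sqrt{x}}$ instead of $x^{x}$, and the discrepancy is not absorbed by the multiplicative rescaling allowed in $\succcurlyeq$. The same sublinear loss appears in your ``propagation along slices'' variant, because propagating almost-invariance from the slice over $h$ to the slice over $h'$ costs a multiple of $d_{H}(h,h')/x$ of the boundary budget. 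Your closing sentence (``requires a refined analysis, likely combining\ldots entropy or heat-kernel estimates'') is precisely the missing proof: the passage from the exponent $V_{H,S_{H}}(y)$ with $y$ sublinear to the sharp exponent $V_{H,S_{H}}(x)$ is the entire content of the Erschler--Zheng result, and no mechanism for it is supplied. So the proposal should be regarded as an (honest) failed attempt rather than a proof.
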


From these theorems, one can deduce for instance the isoperimetric profile of lampshufflers over polynomial growth groups:
\begin{equation*}
    \prof{\shuf{H}}(x)\simeq\left(\frac{\ln(x)}{\ln(\ln(x))}\right)^{\frac{1}{d}},
\end{equation*}
when $H$ has polynomial growth of degree $d\ge 1$.

\smallskip

For many groups, the lower and upper bounds provided by these results are not the same, so they do not provide precise estimates on the isoperimetric profiles of lampshufflers. Theorem~\ref{th:boundsForProfile intro} below provides finer estimates. It is in fact an application of Corollary~\ref{cor:ProfHalo intro}, that we deduce from Theorem~\ref{th:recallThm intro}, stated in the more general context of halo products. Before defining these groups, let us focus on lampshufflers and the consequences of Theorem~\ref{th:boundsForProfile intro}.

\smallskip

In our statements, saying that $\profp{H}$ satisfies Assumption~$(\star)$ means that $\profp{H}(Cx)=O(\profp{H}(x))$ for any $C>0$.

\begin{theoremletter}[see Corollary~\ref{cor:boundonProf}]\label{th:boundsForProfile intro}
Let $p\geq 1$. Let $H$ be a finitely generated amenable group whose $\ell^p$-isoperimetric profile $\profp{H}$ satisfies Assumption~$(\star)$. Then,
the $\ell^p$-isoperimetric profile $\profp{\shuf{H}}$ of $\shuf{H}$ satisfies
\begin{equation*}
    \profp{H}\left(\frac{\ln(x)}{\ln(\ln(x))}\right) \preccurlyeq \profp{\shuf{H}}(x) \preccurlyeq \prof{H}(\ln(x)). 
\end{equation*}
\end{theoremletter}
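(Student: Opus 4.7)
The inequality combines a lower and an upper bound on $\profp{\shuf{H}}$. The lower bound extends the Saloff-Coste--Zheng construction to general $p\geq 1$: for each $n$, let $B_n \subset H$ be a near-optimal F\o lner set for $\profp{H}(n)$, i.e.\ $|B_n|=n$ and $\|\mathbf{1}_{B_n}\|_p/\|\nabla \mathbf{1}_{B_n}\|_p \gtrsim \profp{H}(n)$. The test function $\mathbf{1}_{\mathrm{Sym}(B_n)\cdot B_n}$ on $\shuf{H}$ has support of size $n\cdot n!$, so by Stirling $n \simeq \ln(x)/\ln\ln(x)$ when $x \simeq n\cdot n!$. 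Since $\mathrm{Sym}(B_n)\cdot B_n$ is invariant under the transposition generators supported inside $B_n$, the $\ell^p$-gradient in $\shuf{H}$ is governed only by $\|\nabla \mathbf{1}_{B_n}\|_p$, and the Rayleigh quotient is $\gtrsim \profp{H}(n)$. This yields $\profp{\shuf{H}}(x) \succcurlyeq \profp{H}(\ln(x)/\ln\ln(x))$.

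\textbf{Upper bound via a lamplighter subgraph.} For the upper bound, the strategy is to exhibit inside $\shuf{H}$ a lamplighter subgraph $\Z/2\wr H$ whose own profile is exactly $\prof{H}(\ln x)$ (by Erschler's classical estimate) and to transfer this to a lower bound on F\o lner functions of $\shuf{H}$. To produce such a subgraph, fix $s_0 \in S_H$ of infinite order (replacing $H$ by a finite-index subgroup if necessary) and a transversal $T \subset H$ of $\langle s_0\rangle$-cosets. The transpositions $\tau_h = (h, hs_0)$ for $h \in T$ are pairwise disjoint, so they generate a subgroup $\bigoplus_T \Z/2 \leq \fsym{H}$, and together with the shift action of $H$ this identifies $\Z/2\wr H$ with a subgraph of $\shuf{H}$.

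\textbf{Main obstacle.} A plain subgroup inclusion would bound profiles in the wrong direction, so one cannot invoke $\Z/2\wr H \hookrightarrow \shuf{H}$ directly. Instead, I would argue combinatorially on a F\o lner set $A \subset \shuf{H}$ with $|\partial A|/|A| \leq 1/n$: (i)~the projection $\pi(A) \subset H$ is F\o lner of comparable ratio, so $|\pi(A)| \gtrsim \text{F\o l}_{H}(n)$; (ii)~for $h$ in the interior of $\pi(A)$, the smallness of $\partial A$ along the transposition generators $\tau_{h'}$, with $h'$ ranging over a ball of radius $\simeq n$ in $H$ around $h$, forces the fiber $A_h \subset \fsym{H}$ to be approximately invariant under a subgroup of the form $\bigoplus_{h'\in B_h} \Z/2$ with $|B_h| \gtrsim \text{F\o l}_{H}(n)$, hence $|A_h| \gtrsim 2^{\text{F\o l}_{H}(n)}$; (iii)~summing over $h$ yields $|A| \gtrsim \exp(\text{F\o l}_{H}(n))$, which inverts to $\profp{\shuf{H}}(x) \preccurlyeq \prof{H}(\ln x)$. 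Assumption~$(\star)$ is used at the end to absorb multiplicative constants in $\profp{H}$. The crux is step (ii): converting a weak averaged almost-invariance along transposition generators into rigid algebraic invariance on the fiber is precisely where the lamplighter coordinate system provided by the disjoint transpositions $\tau_h$ becomes indispensable, and this is the combinatorial heart of the halo-product generalisation underlying Theorem~A.
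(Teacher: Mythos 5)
Your lower bound is essentially the paper's (Proposition~\ref{prop:upperboundonFol} and Corollary~\ref{cor:lowerboundonProf}): extend an optimal F\o lner sequence for $H$ to $\shuf{H}$ by tensoring with the indicator of a block, then invert the factorial lamp growth via Stirling. One caveat: for general $p$ you must work with arbitrary finitely supported functions $f_n$ realising the $\ell^p$-F\o lner function (the paper uses $g_n(\sigma,h)=f_n(h)\,\mathds{1}_{\sigma\in L(V_n)}$), not indicator functions of sets, which are only known to suffice at $p=1$.

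The upper bound has genuine gaps. First, your claim that a subgroup inclusion $\Z/2\wr H\hookrightarrow\shuf{H}$ would bound profiles in the wrong direction is backwards: by Theorem~\ref{th:profilemonotonuousSubgroup} such an inclusion would give $\profp{\shuf{H}}\preccurlyeq\profp{\Z/2\wr H}\simeq\prof{H}(\ln x)$, which is exactly the desired bound. The real obstacle is that your construction does not produce that subgroup, nor a subgraph quasi-isometric to $\Z/2\wr H$: the group $\bigoplus_T\Z/2$ generated by the disjoint transpositions $\tau_h=(h,hs_0)$, $h\in T$, is not normalised by the conjugation action of $H$ (since $h\cdot\tau_{h',h's_0}=\tau_{hh',hh's_0}$ and $hh'\notin T$ in general), and moreover the lamps are indexed by a $\langle s_0\rangle$-transversal $T$, which is much smaller than $H$ when $s_0$ has infinite order (for $H=\Z^2$, $s_0=(1,0)$, $T$ is a copy of $\Z$) — so at best you would reach a lamplighter over $T$, not over $H$. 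Second, the combinatorial argument you sketch on a F\o lner set $A\subset\shuf{H}$, extracting $2^{\text{F\o l}_H(n)}$ mass in each fiber, is precisely Erschler's lower bound $\text{F\o l}_{B\wr A}(x)\succcurlyeq(\text{F\o l}_B(x))^{C\,\text{F\o l}_A(x)}$ for lamplighter graphs~\cite[Theorem~4.3]{Ers06}, a nontrivial result you do not actually supply. The paper's Theorem~\ref{th:lowerboundonFol} avoids reproving it: it takes a $(D+2)$-separated \emph{net} $X_0\subset H$ (quasi-isometric to $H$, unlike your $T$), realises a lamplighter graph $Y_\star\cong L(\{1_H,s_0\})\wr X_0$ as a subgraph of $\shuf{H}$ using large-scale commutativity, partitions $\shuf{H}$ into translates of such subgraphs indexed by cosets of $\mathcal{T}=\bigoplus_{x\in X_0}L(\{x,xs_0\})$ in $L(H)$, invokes the monotonicity of F\o lner functions under such partitions (adapting~\cite[Lemma~4]{Ers03}), and only then cites Erschler's lamplighter-graph estimate. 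Your steps (ii)--(iii) would have to reprove these last two ingredients from scratch, and they are not routine.
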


Assumption $(\star)$ already appeared several times in the literature, see e.g.~\cite{Ers03, Corr24} for the case $p=1$, and does not seem to be restrictive. In fact, to our knowledge, there is no known example of a finitely generated amenable group whose isoperimetric profiles do not satisfy Assumption~$(\star)$. For instance, it is easy to check that Brieussel-Zheng's groups satisfy this assumption (see~\cite[after Corollary 4.1]{Corr24}), as well as all the examples of isoperimetric profiles we mentioned above.

\smallskip 

An immediate consequence of Theorem~\ref{th:boundsForProfile intro} is the next statement.

\begin{corollaryletter}\label{cor:corB}
Let $p\geq 1$. Let $H$ be a finitely generated amenable group whose $\ell^p$-isoperimetric profile $\profp{H}$ satisfies Assumption~$(\star)$. Assume moreover that
\begin{equation*}
   \profp{H}\left(\frac{x}{\ln(x)}\right) \simeq \profp{H}(x)\text{ and }\profp{H}(x)\simeq\prof{H}(x).
\end{equation*}
Then one has 
\begin{equation*}
    \profp{\shuf{H}}(x) \simeq \profp{H}(\ln(x)).
\end{equation*}
\end{corollaryletter}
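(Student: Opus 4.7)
The plan is to deduce the corollary directly from Theorem~\ref{th:boundsForProfile intro}, using the two asymptotic equivalences stated in the hypotheses to collapse its lower and upper bounds into the same function $\profp{H}(\ln(x))$.

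First, I would apply the lower bound of Theorem~\ref{th:boundsForProfile intro}, which yields
\begin{equation*}
    \profp{H}\!\left(\frac{\ln(x)}{\ln(\ln(x))}\right) \preccurlyeq \profp{\shuf{H}}(x).
\end{equation*}
By the first assumption, $\profp{H}\!\left(\tfrac{\ln(x)}{\ln(\ln(x))}\right) \simeq \profp{H}(\ln(x))$, so we immediately obtain $\profp{H}(\ln(x)) \preccurlyeq \profp{\shuf{H}}(x)$.

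Next, for the upper bound, I would start with $\profp{\shuf{H}}(x) \preccurlyeq \prof{H}(\ln(x))$, also given by Theorem~\ref{th:boundsForProfile intro}. Substituting $\ln(x)$ for $x$ in the second assumption $\profp{H}(x) \simeq \prof{H}(x)$ yields $\prof{H}(\ln(x)) \simeq \profp{H}(\ln(x))$, and therefore $\profp{\shuf{H}}(x) \preccurlyeq \profp{H}(\ln(x))$. Combining the two inequalities gives the desired equivalence $\profp{\shuf{H}}(x) \simeq \profp{H}(\ln(x))$.

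There is no real obstacle here: the statement is formulated precisely so that the two hypotheses exactly bridge the gap between the lower and upper bounds of Theorem~\ref{th:boundsForProfile intro}. The only care needed is that the first hypothesis is interpreted as holding with respect to the variable $x$ (so that the two expressions $\profp{H}(\ln(x)/\ln(\ln(x)))$ and $\profp{H}(\ln(x))$ are compared as functions of $x$), which is unambiguous from the way Assumption~$(\star)$ is formulated.
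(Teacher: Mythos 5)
Your proof takes the same route the paper intends; the paper itself labels Corollary~\ref{cor:corB} ``an immediate consequence of Theorem~\ref{th:boundsForProfile intro}'' and offers no separate argument, so deducing the result by collapsing the two bounds of that theorem via the two hypotheses is exactly the intended reasoning, and your handling of the lower bound (transitivity of $\preccurlyeq$, no substitution needed) is clean.

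One step deserves more care than you give it: the claim that ``substituting $\ln(x)$ for $x$'' in $\profp{H}(x)\simeq\prof{H}(x)$ yields $\profp{H}(\ln(x))\simeq\prof{H}(\ln(x))$. With the paper's convention, $f\preccurlyeq g$ allows a multiplicative constant $C$ inside the argument, i.e.\ $f(x)=O(g(Cx))$, and such a relation is \emph{not} in general preserved by precomposition with $\ln$: for instance $e^x\simeq e^{x/2}$, but $x\not\simeq\sqrt{x}$. What rescues the argument here is Assumption~$(\star)$ for $\profp{H}$. From $\prof{H}\preccurlyeq\profp{H}$ one gets $\prof{H}(y)\le K\profp{H}(Cy)$, and $(\star)$ lets you absorb $C$ to obtain a pointwise bound $\prof{H}(y)\le K'\profp{H}(y)$ for large $y$; \emph{that} inequality survives the substitution $y=\ln(x)$, giving $\prof{H}(\ln(x))\preccurlyeq\profp{H}(\ln(x))$, which is all you need for the upper bound. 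So the conclusion is correct, but the justification for the upper bound should explicitly invoke $(\star)$ rather than appeal to a generic substitution, which would be invalid without it.
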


In practice, this result applies for many groups having slow enough profiles, for instance solvable Baumslag-Solitar groups $\text{BS}(1,n)$, lamplighters over $\Z^d$, or polycyclic groups with exponential growth. In particular, for the latter class, we recover~\cite[Corollary~6.9]{SCZ21}.

\begin{remark}
Corollary~\ref{cor:corB} implies that, if the $\ell^p$-isoperimetric profiles (for $p\ge 1$) of $H$ all have the same asymptotic behaviour, then the same holds for $\shuf{H}$, under mild assumptions on $H$.
\end{remark}

One class of groups for which estimates from~\cite{EZ21} and from~\cite{SCZ21} are not optimal is the one of iterated lampshufflers, defined inductively by $\shufn{n}{H}\defeq \shuf{\shufn{n-1}{H}}$ if $n\ge 1$ and $\shufn{0}{H}\defeq H$. It turns out that iterations of a finer version of Theorem~\ref{th:boundsForProfile intro} (see Corollary~\ref{cor:boundonProf} which leads to Corollary~\ref{cor:boundonProfIterated}) yield finer estimates, that we record in the two following statements. 

\begin{theoremletter}[see Theorem~\ref{th:profilesoflampjugglers}]\label{th:profilesoflampjugglers intro}
Let $p\geq 1$ and let $n\geq 0$ be an integer. Let $H$ be a finitely generated amenable group such that $\profp{H}(x)\simeq \prof{H}(x)$. Then the following holds for the $\ell^p$-isoperimetric profile of $\shufn{n}{H}$.
\begin{itemize}
    \item If $H$ has polynomial growth of degree $d\geq 1$, then
    \begin{equation*}
        \profp{\shufn{n}{s}{H}}(x)\simeq\left (\frac{\ln^{\circ n}(x)}{\ln^{\circ (n+1)}(x)}\right )^{\frac{1}{d}}.
    \end{equation*}
    \item If the $\ell^p$-isoperimetric profile $\profp{H}$ of $H$ satisfies Assumption~$(\star)$ and $\profp{H}\left(\frac{x}{\ln(x)}\right)\simeq \profp{H}(x)$, then
    \begin{equation*}
        \profp{\shufn{n}{H}}(x)\simeq\profp{H}(\ln^{\circ n}(x)).
    \end{equation*}
\end{itemize}
\end{theoremletter}

\paragraph{Halo products.} In~\cite{GT24a}, Genevois and Tessera introduced a general class of groups, called \textit{halo products}, as a natural generalisation of wreath products. This class encompasses lampshufflers, lampjugglers and lampcloners, and constitutes the suitable framework for our main result Theorem~\ref{th:recallThm intro}.

\begin{definition}
Let $X$ be a set. A \textit{halo of groups $\halo$ over $X$} is the data, for any subset $S\subset X$, of a group $L(S)$ such that:
\begin{itemize}
    \item for all $R,S\subset X$, if $R\subset S$ then $L(R)\leqslant L(S)$;
    \item $L(\emptyset)=\lbrace 1\rbrace$ and $L(X)=\langle L(S) : S\subset X \;\text{finite}\rangle$;
    \item for all $R,S\subset X$, $L(R\cap S)=L(R)\cap L(S)$.
\end{itemize}
\end{definition}

Given an action $H\curvearrowright X$ and a morphism $\alpha\colon H \longrightarrow \text{Aut}(L(X))$ satisfying $\alpha(h)(L(S))=L(hS)$ for any $S\subset X$ and $h\in H$, the \textit{permutational halo product} $\halo_{X,\alpha}X$ is the semi-direct product 
\begin{equation*}
    \halo_{X,\alpha}H \defeq L(X)\rtimes_{\alpha}H.
\end{equation*}

\sloppy In this paper, we focus on the case $X=H$. As mentioned, examples of halo products include
\begin{itemize}
    \item wreath products $F\wr H=(\bigoplus_H F)\rtimes H$, for which $L(S)=\bigoplus_S F$;
    \item lampshufflers $\shuf{H}=\fsym{H}\rtimes H$, for which $L(S)=\fsym{S}$,
\end{itemize}
and many other examples are introduced and studied in~\cite{GT24a}, such as
\begin{itemize}
    \item lampjugglers $\juggler{s}{H}=\fsym{H\times\lbrace 1,\dots,s\rbrace}\rtimes H$, with an integer $s\geq 1$, for which $L(S)=\fsym{S\times\lbrace 1,\dots,s\rbrace}$;
    \item lampcloners $\cloner{H}=\text{FGL}(H)\rtimes H$, with a field $\field$, for which $L(S)=\text{FGL}(S)$;
    \item lampdesigners $\designer{H}=(F\wr_{H}\fsym{H})\rtimes H$, with a non-trivial finite group $F$, for which $L(S)=F\wr_{S}\fsym{S}$, 
\end{itemize}
where $S$ denotes any subset of $H$. Here, $\text{FGL}(H)$ denotes the group of linear automorphisms of the abstract $\field$-vector space $V_{H}$ admitting $H$ as a basis, fixing all but finitely many basis vectors. We refer the reader to Section~\ref{sec:halo} for more details.

\smallskip

The motivation in~\cite{GT24a} to introduce such a general framework is that the semi-direct product structure provides a foliation of these spaces that must be, if $H$ satisfies additional mild assumptions, "quasi-preserved" by quasi-isometries, allowing the authors to show strong rigidity phenomena for quasi-isometries between such spaces, and thus extending the classification already obtained in~\cite{GT24b}. 

\paragraph{Isoperimetric profiles of halo products.} In this paper, our aim is to show that the halo structure is also particularly well-suited for tracking isoperimetric profiles of these groups. Namely, we prove the following two estimates on their F\o lner functions. The terminologies and notations are explained just after the statement.

\begin{theoremletter}[see Proposition~\ref{prop:upperboundonFol} and Theorem~\ref{th:lowerboundonFol}]\label{th:recallThm intro}
Let $p\ge 1$. Let $H$ be a finitely generated amenable group and let $S_{H}$ be a finite generating set of $H$. Let $\halo H$ be a naturally generated halo product over $H$.
\begin{enumerate}[label=(\roman*)]
    \item\label{item:1} If $\halo H$ is large-scale commutative and has finitely generated blocks, then for any $s_{0}\in S_{H}$, there exists a constant $C>0$ such that
\begin{equation*}
    \left(\fol{L(\lbrace1_{H},s_{0}\rbrace)}(x)\right)^{C\fol{H}(x)} \preccurlyeq \folp{\halo H}(x).
\end{equation*}
    \item 
If $\halo H$ has consistent blocks, then there exists a constant $C>0$ such that
\begin{equation*}
    \folp{\halo H}(x) \preccurlyeq \folp{H}(x)\cdot \Lambda_{\halo H}(C\cdot \folp{H}(x)).
\end{equation*}
\end{enumerate}
\end{theoremletter}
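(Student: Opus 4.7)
The plan is to treat the two parts separately. Part~(ii), giving the upper bound on $\folp{\halo H}$, is comparatively standard and proceeds by exhibiting an explicit $\ell^p$-F\o lner function. Part~(i), the lower bound, is the main technical contribution and follows the lamplighter-subgraph strategy announced in the introduction.

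For part~(ii), I would start from an almost-optimal function $f \colon H \to \R_+$ realising $\|f\|_p/\|\nabla f\|_p \ge x$ with $|\supp f| \simeq \folp{H}(x)$. Let $A \defeq \supp f$ and let $A'$ be a bounded $H$-neighbourhood of $A$, large enough so that the support of every ambient generator of $\halo H$ meeting $A$ lies in blocks indexed by $A'$; the consistent blocks assumption then yields $|L(A')| \preccurlyeq \Lambda_{\halo H}(C|A|)$ for a suitable constant $C$. Define $F(\ell,h) \defeq \mathbf{1}_{L(A')}(\ell) f(h)$ on $\halo H = L(H) \rtimes H$. Horizontal generators from $S_H$ contribute essentially $|L(A')| \cdot \|\nabla f\|_p^p$ to $\|\nabla F\|_p^p$, while the vertical lamp generators contribute only through the $H$-boundary slice of $L(A')$, which is negligible when $A$ is almost $\ell^p$-F\o lner in $H$. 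Normalising gives $\|F\|_p / \|\nabla F\|_p \gtrsim x$ with $|\supp F| \simeq \folp{H}(x) \cdot \Lambda_{\halo H}(C \folp{H}(x))$, as required.

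For part~(i), the plan is to embed a lamplighter-type subgraph into $\halo H$ with controlled distortion and then invoke Erschler's classical lower bound $\folp{F \wr K}(x) \succcurlyeq |F|^{C \folp{K}(x)}$, in its bounded-degree-graph form. The large-scale commutativity assumption tells us that two copies $L(S_1)$ and $L(S_2)$ with $S_1, S_2$ sufficiently far apart commute inside $\halo H$. Choosing a coarsely dense family of $H$-translates of the local block $L(\lbrace 1_H, s_0 \rbrace)$ then produces, using this commutation, a subgraph of $\halo H$ whose combinatorial structure is that of a lamplighter graph with lamp group $L(\lbrace 1_H, s_0 \rbrace)$ and base a quasi-isometric copy of $H$. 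The finitely generated blocks hypothesis guarantees that the natural generating set on this subgraph is coarsely controlled by the ambient generating set of $\halo H$, so that F\o lner functions compare monotonically, and Erschler's estimate applied to this lamplighter subgraph yields the desired bound.

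The main obstacle is part~(i): simply exhibiting an abstract lamplighter subgroup is not enough, as one must show that its F\o lner function gives a genuine lower bound on $\folp{\halo H}$. This requires a careful analysis of how the edges of the lamplighter subgraph are quasi-preserved in the Cayley graph of $\halo H$. The two assumptions of large-scale commutativity and finitely generated blocks combine precisely to make this embedding quasi-isometric enough for Erschler's estimate to be transported to the ambient halo product.
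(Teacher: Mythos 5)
Your proposal follows the paper's strategy on both parts. For~(ii) the construction is the paper's almost verbatim: take $(f_n)$ realising $\folp{H}$, set $U_n=\supp f_n$ and $V_n=\bigcup_{s\in S_H}U_n s$, and use $g_n(\sigma,h)=f_n(h)\mathds{1}_{\sigma\in L(V_n)}$. One clarification worth making: the contribution of the vertical (lamp) generators to $\|\nabla g_n\|_p$ is \emph{exactly zero}, not merely negligible. For $h\in U_n$, right-multiplying $\sigma$ by $h\cdot\sigma_s$ preserves membership in $L(V_n)$ because $h,hs\in V_n$, and for $h\notin U_n$ the factor $f_n(h)$ already vanishes. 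This exactness is what gives $\|\nabla g_n\|_p/\|g_n\|_p=\|\nabla f_n\|_p/\|f_n\|_p$ on the nose, and the support size is then $|U_n|\cdot|L(V_n)|\le\folp{H}(n)\cdot\Lambda_{\halo H}(|S_H|\folp{H}(n))$.

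For~(i), there is a gap in the transfer mechanism. You propose that the lamplighter subgraph be quasi-isometrically embedded in $\halo H$ with coarsely controlled generating sets, and assert that this forces monotonicity of F\o lner functions. But a quasi-isometrically embedded subgraph does not by itself yield a lower bound on the F\o lner function of the ambient graph. The paper instead relies on a \emph{partition}. Having fixed a $(D+2)$-separated net $X_0\subset H$ (quasi-isometric to $H$) and a distinguished $s_0\in S_H$, it forms $\mathcal{T}=\bigoplus_{x\in X_0}L(\{x,xs_0\})\leqslant L(H)$ (a genuine direct sum thanks to large-scale commutativity and separation), decomposes $L(H)$ into $\mathcal{T}$-cosets, and shows that the resulting pieces $Y_c=(\kappa_c\mathcal{T})\times H$ tile $\halo H$ by pairwise isomorphic left-translates of the base piece $Y_{c_0}$, which is itself quasi-isometric to the lamplighter graph $L(\{1_H,s_0\})\wr X_0$. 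It is this tiling, not the embedding, that lets one run the graph-theoretic adaptation of Erschler's subgroup lemma (\cite[Lemma~4]{Ers03}) to conclude $\text{F\o l}_{\halo H}\succcurlyeq\text{F\o l}_{Y_{c_0}}$, after which Erschler's $\ell^1$ lower bound for lamplighter graphs finishes the argument. Finally, since that lower bound is an $\ell^1$ statement, you must first reduce to $p=1$ via the folklore monotonicity $\folp{\halo H}\succcurlyeq\text{F\o l}_{\halo H}$ (Lemma~\ref{lem:MonotonuousProfile}); this is why the exponent in~(i) is $\text{F\o l}_H$ and the base is $\text{F\o l}_{L(\{1_H,s_0\})}$, the $\ell^1$-F\o lner functions, for all $p\geq 1$.
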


A halo product $\halo H$ has \textit{finite} (resp. \textit{finitely generated}) \textit{blocks} if $L(S)$ is finite (resp. finitely generated) for any finite subset $S\subset H$, and $\halo H$ has \textit{consistent} blocks if its blocks are finite and moreover the cardinality of $L(S)$ only depends on $|S|$. This assumption allows to define, as in~\cite{GT24a}, a function $\Lambda_{\halo H}\colon\N\longrightarrow\N$ sending any $n\in\N$ to $|L(S)|$ where $|S|=n$, called the \textit{lamp growth sequence} of $\halo H$.

\smallskip

Moreover, $\halo H$ is \textit{large-scale commutative} if there is $D\ge 0$ such that for any subsets $R,S\subset H$ that are at least $D$ far apart in $H$, $L(R)$ and $L(S)$ commute in $L(H)$. Such a notion has been introduced in~\cite{GT24a} as a key assumption to understand the general form of quasi-isometries between halo groups. Lastly, $\halo H$ is \textit{naturally generated} if it admits the natural and simplest generating set that we can imagine for a halo product, in view of the classical finite generating sets for lamplighters and lampshufflers.

\smallskip

For instance, a lamplighter $F\wr H$ and a lampshuffler $\shuf{H}$ are large-scale commutative (with $D=0$ for $F\wr H$, $D=1$ for $\shuf{H}$), are naturally generated, and have consistent blocks, with lamp growth sequences given by
\begin{equation*}
    \Lambda_{F\wr H}(n)=|F|^n \;\; \text{and} \;\; \Lambda_{\shuf{H}}(n)=n!.
\end{equation*}

\paragraph{Towards the proof of Theorem~\ref{th:recallThm intro}.} The lower bound is a direct computation, presented in Section~\ref{sec:UpperBoundFolner}, and inspired from the computations in the proof of~\cite[Theorem~6.8]{SCZ21} in the case of lampshufflers. We exhibit an explicit sequence of almost invariant functions $\halo H\longrightarrow \R$ from one such sequence of $H$. Namely, a sequence $(f_{n})_{n\in\N}$ of functions $H\longrightarrow\R$, realising the $\ell^p$-isoperimetric profile of $H$ (or equivalently its $\ell^p$-F\o lner function), gives rise to a sequence $(g_{n})_{n\in\N}$ for $\halo H$, defined by
\begin{equation*}
    \begin{array}{llcl}
    g_{n}\colon &\halo H &\longrightarrow &\R\\
    &(\sigma,h)&\longmapsto &f_{n}(h)\cdot \mathds{1}_{\sigma\in L(V_{n})}
    \end{array}
\end{equation*}
with $V_{n}\defeq\bigcup_{s\in S_{H}}{(\supp f_{n})s}$. This naturally provides a lower bound for the $\ell^p$-isoperimetric profile of $\halo H$.

\smallskip

The technical part is on the upper bound, and Section~\ref{sec:secondstrategy} provides such a bound in a general situation. In the particular case of lampshufflers, a strategy, well-known to the experts, consists in finding a "good" lamplighter subgroup of $\shuf{H}$, in the sense that this lamplighter should be based on a subgroup $K$ of $H$ which is quasi-isometric to $H$, or at least has the same isoperimetric profile. For this,~\cite[Proposition~2.4]{Sil24} is helpful. We may refer the reader to Appendix~\ref{appendixA} where the aforementioned method is presented and is instructive for the sequel. The upper bound then follows from the monotonicity of the $\ell^p$-isoperimetric profile when passing to finitely generated subgroups. Finding such lamplighter subgroups requires some algebraic assumptions on the base group, such as being non perfect or non co-Hopfian. Such classes of groups provide, at first glance, a nice framework (see Remark~\ref{rem:Preservation} and Proposition~\ref{prop:A7}) and encompass already many classical examples (e.g. all solvable groups).

\begin{remark}
    Notice that many halo products (lampjugglers $\juggler{s}{H}$ with $s\ge 2$, lampdesigners $\designer{H}$ and lampcloners $\cloner{H}$) still contain a lamplighter based on $H$ (we explain it in Section~\ref{sec:defHalo}, after the definition of lampcloners). On the other hand, there exist examples, other than lampshufflers, which do not contain a "good" lamplighter as a subgroup \textit{a priori}. In this paper, we exhibit such examples that we call \textit{lampupcloners}, which do not consider matrices (as lampcloners do) but upper triangular matrices with diagonal entries equal to $1$. The base group must be ordered for "upper triangular" to have a proper meaning and we will assume that the order is total. In this paper, we will focus on the case $H=\Z^d$ with the lexicographic order.
\end{remark}

The goal is to find another strategy for the lampshufflers or other halo products which do not contain a "good" lamplighter as a subgroup. Nonetheless, for these specific cases, the idea of finding substructures still remains fruitful. In Section~\ref{sec:secondstrategy}, we therefore make use of the more general notion of lamplighter graphs, that turn out to appear naturally in halo products as subgraphs. Large-scale commutativity will be a key ingredient since we need configurations of lamps to commute. In the particular case of lampshufflers, this novelty has the advantage, compared to Appendix~\ref{appendixA}, of requiring no assumptions on the base group $H$. We then conclude by establishing the monotonicity of the F\o lner function when passing to such subgraphs, in a similar manner as in~\cite[Lemma~4]{Ers03}.

\paragraph{Consequences of Theorem~\ref{th:recallThm intro}.} We first deduce from Theorem~\ref{th:recallThm intro} that, if $\halo H$ is large-scale commutative, is naturally generated and has consistent blocks, then for every $p\geq 1$, its $\ell^p$-F\o lner function satisfies
\begin{equation*}
    K^{\fol{H}(x)}\preccurlyeq\folp{\halo H}(x) \preccurlyeq \folp{H}(x)\cdot \Lambda_{\halo H}(C\cdot \folp{H}(x))
\end{equation*}
for some positive constants $C,K>0$.

\smallskip

Now, in terms of isoperimetric profiles, the main result is the following (see also Corollary~\ref{cor:boundonProfIterated} for the iterated version).

\begin{corollaryletter}[see Corollary~\ref{cor:boundonProf}]\label{cor:ProfHalo intro}
Let $H$ be a finitely generated amenable group. Let $\halo H$ be a naturally generated and large-scale commutative halo product having consistent blocks. Then, given $p\geq 1$, the $\ell^p$-isoperimetric profile of $\halo H$ satisfies
\begin{equation*}
    \frac{1}{C}\profp{H}\left (\frac{1}{C}\varphi^{-1}\left (x\right )\right )\leq\profp{\halo H}(x)\leq C\prof{H}(C\ln{x})
\end{equation*}
for some positive constant $C>0$, and with $\varphi(x)=x\cdot \Lambda_{\halo H}(x)$, where $\Lambda_{\halo H}$ is the lamp growth sequence of $\halo H$. 
\end{corollaryletter}

We then deduce Theorem~\ref{th:boundsForProfile intro} from this corollary. This result, combined with the fact that lampshufflers are subgroups of lampjugglers and lampdesigners, also implies that the isoperimetric profiles of the latters behave as the isoperimetric profiles of lampshufflers.

\begin{corollaryletter}[see Theorems~\ref{th:profilesoflampjugglers} and~\ref{th:profileoflampdesigners}]
Theorem~\ref{th:boundsForProfile intro} and Theorem~\ref{th:profilesoflampjugglers intro} also hold for lampjugglers and lampdesigners.
\end{corollaryletter}

Finally, let us also illustrate Corollary~\ref{cor:ProfHalo intro} with lampcloners and our new examples of halo products $\upcloner{\Z^d}$. Note that the estimates are less precise in the case of a base group of polynomial growth.

\begin{theoremletter}[see Theorem~\ref{th:profilesoflampcloners}]
Let $p\geq 1$ and let $n\geq 0$ be an integer. Let $H$ be a finitely generated amenable group such that $\profp{H}(x)\simeq \prof{H}(x)$. Let $\field$ be a field. Then the following holds for the $\ell^p$-isoperimetric profile of $\clonern{n}{H}$.
\begin{itemize}
    \item If $H$ has polynomial growth of degree $d\geq 1$, then
    \begin{equation}\label{eq:ClonerPolynomial}
        \left (\ln^{\circ n}(x)\right)^{\frac{1}{2d}}\preccurlyeq\profp{\clonern{n}{H}}(x)\preccurlyeq\left (\ln^{\circ n}(x)\right )^{\frac{1}{d}}.
    \end{equation}
    The same estimates hold for $\profp{\upcloner{\Z^d}}$, $d\geq 1$, where $\Z^d$ is endowed with the lexicographic order.
    \item If the $\ell^p$-isoperimetric profile $\profp{H}$ of $H$ satisfies Assumption~$(\star)$ and $\profp{H}\left(\sqrt{x}\right)\simeq \profp{H}(x)$, then
    \begin{equation*}
        \profp{\clonern{n}{H}}(x)\simeq\profp{H}(\ln^{\circ n}(x)).
    \end{equation*}
\end{itemize}
\end{theoremletter}

In the case of polynomial growth groups $H$, we have in fact the following slight improvement of~\eqref{eq:ClonerPolynomial} for the upper bound:
\begin{equation*}
    \profp{\cloner{H}}(x)\preccurlyeq\left(\frac{\ln(x)}{\ln(\ln(x))}\right)^{\frac{1}{d}},
\end{equation*}
since $\shuf{H}$ is a subgroup of $\cloner{H}$ (consider permutation matrices in $\mathrm{FGL}(H)$).

\paragraph{Applications to regular maps.} We now turn to the problem of the existence of quasi-isometries and regular maps between commonly studied spaces, which has been widely investigated in the literature, see e.g.~\cite{BST12, Tes20, HMT20, HMT22, Ben22, HMT25} among others. In these articles, the main guideline is, mostly, to associate to spaces new quantities that are monotonuous under regular maps, and that are thiner than the most obvious ones, such as volume growth or asymptotic dimension. As a concrete example, the volume growth does not say anything about the existence of a regular map 
\begin{equation*}
    \mathbb{H}_{\R}^{m_{1}}\times\R^{d_{1}} \longrightarrow \mathbb{H}_{\R}^{m_{2}}\times\R^{d_{2}}
\end{equation*}
whereas Poincaré profiles, introduced and studied in~\cite{HMT20}, impose a monotonic behaviour for the dimension of hyperbolic spaces and the growth exponent of the second factors~\cite[Corollary~1.14]{HMT22}.

\smallskip

On the amenable side, isoperimetric profiles remain powerful invariants to distinguish groups of exponential growth up to quasi-isometry. As an illustration:

\begin{theoremletter}[see Corollary~\ref{cor:IteratedShufflersPolynomialQIBiLip}]\label{th:IteratedShufflersPolynomialQIBiLip intro}
Let $n,m\ge 0$. Let $A$ and $B$ be infinite virtually abelian finitely generated groups, with growth degrees $a$ and $b$ respectively. Then the following are equivalent:
\begin{enumerate}[label=(\roman*)]
    \item $\shufn{n}{A}$ and $\shufn{m}{B}$ are quasi-isometric.
    \item $n=m$ and $a=b$.
    \item $\shufn{n}{A}$ and $\shufn{m}{B}$ are biLipschitz equivalent.
\end{enumerate}
\end{theoremletter}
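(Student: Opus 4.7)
The implication (iii)$\Rightarrow$(i) is immediate, so the plan is to establish (i)$\Rightarrow$(ii) and (ii)$\Rightarrow$(iii). For (i)$\Rightarrow$(ii), I would invoke the explicit asymptotics of Proposition~\ref{prop:profileofshufnofpolynomialgrowthgroups}: since an infinite virtually abelian group of growth degree $d$ has polynomial growth of degree $d$, one obtains
\[
\prof{\shufn{n}{A}}(x)\simeq \left(\frac{\ln^{\circ n}(x)}{\ln^{\circ(n+1)}(x)}\right)^{1/a}, \qquad \prof{\shufn{m}{B}}(x)\simeq \left(\frac{\ln^{\circ m}(x)}{\ln^{\circ(m+1)}(x)}\right)^{1/b}.
\]
The isoperimetric profile being a quasi-isometry invariant up to $\simeq$, these asymptotic expressions must coincide. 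Assuming without loss of generality $n\leq m$, if the inequality were strict then $\ln^{\circ n}(x)/\ln^{\circ(n+1)}(x)$ would dominate $\ln^{\circ m}(x)/\ln^{\circ (m+1)}(x)$ to any positive power, contradicting $\simeq$-equivalence; hence $n=m$. Setting $y=\ln^{\circ n}(x)/\ln^{\circ(n+1)}(x)\to\infty$, the relation $y^{1/a}\simeq y^{1/b}$ then forces $a=b$.

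For (ii)$\Rightarrow$(iii) I would proceed in two steps. First, one shows $A\sim_{biLip}B$: both groups are infinite and virtually $\Z^a$, so each contains $\Z^a$ as a finite-index subgroup. Choosing coset representatives identifies $A$ bijectively with $\Z^a\times F_A$ with $F_A$ finite in a biLipschitz manner, and a standard bijection merges the $|F_A|$ copies of $\Z^a$ into a single one by spreading them along one coordinate axis, giving $A\sim_{biLip}\Z^a\sim_{biLip}B$. Second, one upgrades any biLipschitz bijection $\phi\colon H_1\to H_2$ between Cayley graphs to a biLipschitz bijection $\Phi\colon\shuf{H_1}\to\shuf{H_2}$ defined by $\Phi(\sigma,h)=(\phi\sigma\phi^{-1},\phi(h))$; iterating this functorial construction $n$ times then produces $\shufn{n}{A}\sim_{biLip}\shufn{n}{B}$.

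The principal obstacle is establishing the functoriality used in the second step. It rests on showing that the word length of $(\sigma,h)$ in $\shuf{H}$ depends only on the metric structure of the Cayley graph of $H$: namely, it is comparable to the length of a shortest walk starting at $1_H$, visiting $\supp(\sigma)$ sufficiently often to perform the transpositions realising $\sigma$, and ending at $h$ -- a purely metric quantity on $H$. Granted such a description, $\phi$ maps walks in $H_1$ to walks in $H_2$ with bounded multiplicative distortion (connecting consecutive $\phi$-images by geodesics in $H_2$), and the transpositions are transported accordingly, so $\Phi$ is biLipschitz with constants depending only on those of $\phi$. I expect this description of the word length to be extractable from the lamplighter-graph perspective developed in Section~\ref{sec:secondstrategy}.
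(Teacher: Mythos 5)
Your overall strategy matches the paper's: (iii)$\Rightarrow$(i) is immediate, (i)$\Rightarrow$(ii) reads off $n,m,a,b$ from the asymptotics in Proposition~\ref{prop:profileofshufnofpolynomialgrowthgroups}, and (ii)$\Rightarrow$(iii) first reduces $A$, $B$ to $\Z^a$ up to biLipschitz equivalence and then lifts to lampshufflers. Your (i)$\Rightarrow$(ii) is essentially identical to the paper's (the paper takes a logarithm and compares; you argue by domination, which is equivalent), and your first step of (ii)$\Rightarrow$(iii) reproduces the content of the paper's citation to~\cite[Claim 5.4]{Dum25} by an elementary hands-on argument, which is fine.

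The genuine gap is in the second step of (ii)$\Rightarrow$(iii). The paper invokes~\cite[Lemma~8.8]{GT24a}, which says precisely that a biLipschitz equivalence $H_{1}\to H_{2}$ induces a biLipschitz equivalence $\shuf{H_{1}}\to\shuf{H_{2}}$; you correctly guess the right map $\Phi(\sigma,h)=(\phi\sigma\phi^{-1},\phi(h))$, but you do not prove it is biLipschitz. Your suggested route — that the word length of $(\sigma,h)$ in $\shuf{H}$ is a purely metric quantity comparable to a TSP-type walk "visiting $\supp(\sigma)$ sufficiently often to perform the transpositions" — is morally the right idea but is not established in this paper (Section~\ref{sec:secondstrategy} only gives a quasi-isometry between a coset subgraph and a lamplighter graph, which controls word length up to multiplicative \emph{and} additive constants; additive slack would only yield a quasi-isometry of $\shuf{H_{1}}$ and $\shuf{H_{2}}$, not a biLipschitz equivalence). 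The phrase "sufficiently often" also hides the real subtlety: unlike lamplighters, realising a permutation $\sigma$ may force revisiting support points multiple times, and turning this into a clean metric formula with multiplicative-only distortion is exactly the content of the cited lemma. So either cite~\cite[Lemma~8.8]{GT24a} as the paper does, or supply a genuine proof that $\Phi$ is biLipschitz — which requires a sharper word-length estimate for $\shuf{H}$ than anything you can extract from the lamplighter-graph comparison in this paper.
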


\sloppy Two comments are in order here. Firstly, the fact that $\shufn{n}{A}$ and $\shufn{m}{B}$ are quasi-isometric implies that $a=b$ can also be detected with the asymptotic dimension. Indeed, $\shuf{A}$ (more generally $\shufn{n}{A}$) and $A$ have same asymptotic dimension. On the other hand, the asymptotic dimension does not detect numbers of iterations we make, whereas isoperimetric profiles do. These invariants are therefore more powerful with this respect. Additionally, regarding other monotonuous quantities under regular maps, volume growth is unhelpful, as it is exponential for both groups (when $n,m\geq 1$).

\smallskip

Secondly, it is worth noticing that, for (iterated) lampshufflers over virtually abelian groups or groups with slow profiles (see Corollary~\ref{cor:shufflersoverslowprofiles}), being quasi-isometric is the same as being biLipschitz equivalent. This rigidity is in sharp contrast with lamplighters over $\Z$~\cite{Dym10} or over one-ended groups~\cite{GT24b}, classes in which there are pairs of quasi-isometric groups that are not biLipschitz equivalent.

\smallskip

We refer the reader to Corollary~\ref{cor:IteratedShufflersPolynomialRegularMap} and Remark~\ref{rm:extensionsfornilpotentgroups} for asymmetric versions of Theorem~\ref{th:IteratedShufflersPolynomialQIBiLip intro}, about the existence of a regular map 
\begin{equation*}
    \shufn{n}{A}\longrightarrow \shufn{m}{B}
\end{equation*}
for polynomial growth groups $A$ and $B$ (not necessarily virtually abelian). A nice consequence of these studies is the following.

\begin{corollaryletter}[see Corollary~\ref{cor:QIRegularMap}]
Let $n,m\ge 0$. Let $A$ and $B$ be infinite virtually abelian finitely generated groups, with growth degrees $a$ and $b$ respectively. Then the following are equivalent:
\begin{enumerate}[label=(\roman*)]
    \item the three equivalent assertions of Theorem~\ref{th:IteratedShufflersPolynomialQIBiLip intro} hold;
    \item \sloppy $\shufn{n}{A}$ regularly embeds $\shufn{m}{B}$, and $\shufn{m}{B}$ regurlarly embeds into $\shufn{n}{A}$.
\end{enumerate}
\end{corollaryletter}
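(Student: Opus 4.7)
The plan splits into an easy direction and the substantive one. For (i)$\Rightarrow$(ii): by Theorem~A (i.e., Theorem~\ref{th:IteratedShufflersPolynomialQIBiLip intro}), assumption (i) supplies in particular a biLipschitz equivalence between $\shufn{n}{A}$ and $\shufn{m}{B}$, and each direction of such an equivalence is a regular map, which yields (ii).

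For the converse (ii)$\Rightarrow$(i), the strategy is to combine the monotonicity of the $\ell^p$-isoperimetric profile under regular maps (Theorem~5.5 of~\cite{DKLMT22}) with the sharp asymptotics for iterated lampshufflers over polynomial growth groups established earlier, namely $\profp{\shufn{k}{H}}(x)\simeq(\ln^{\circ k}(x)/\ln^{\circ (k+1)}(x))^{1/d}$ when $H$ has polynomial growth of degree $d$. Applying monotonicity to both regular embeddings would yield $\profp{\shufn{n}{A}}(x)\simeq\profp{\shufn{m}{B}}(x)$, which after substitution becomes
\begin{equation*}
    \left(\frac{\ln^{\circ n}(x)}{\ln^{\circ (n+1)}(x)}\right)^{1/a}\simeq\left(\frac{\ln^{\circ m}(x)}{\ln^{\circ (m+1)}(x)}\right)^{1/b}.
\end{equation*}

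It then remains to separate the two pairs of unknowns. Assuming without loss of generality $n\leq m$, I would take a logarithm of both sides and use the asymptotic $\ln(\ln^{\circ k}(x)/\ln^{\circ(k+1)}(x))\sim\ln^{\circ(k+1)}(x)$ as $x\to\infty$. The resulting comparison on the log-scale reduces to $(1/a)\ln^{\circ(n+1)}(x)\simeq(1/b)\ln^{\circ(m+1)}(x)$, and since iterated logarithms at different depths are never multiplicatively equivalent (dilations are absorbed by a single logarithm, so distinct iteration levels are separated strictly), this forces $n+1=m+1$, hence $n=m$. Matching the leading constants then gives $a=b$, and Theorem~A provides (i).

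The main (mild) obstacle is the last step: the separation of iterated logarithms at different levels is standard but has to be handled carefully because the arbitrary positive exponents $1/a,1/b$ and potential dilations in the definition of $\simeq$ must not conspire to identify $\ln^{\circ n}$ with $\ln^{\circ m}$ for $n\neq m$. Taking a single further logarithm, as above, cleanly sidesteps this and makes the comparison tractable.
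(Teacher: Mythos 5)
Your argument is correct, but it reaches the conclusion by a different route than the paper. The paper obtains this corollary as an immediate consequence of the asymmetric statement (Corollary~\ref{cor:IteratedShufflersPolynomialRegularMap}): a \emph{single} regular map $\shufn{n}{A}\longrightarrow\shufn{m}{B}$ already forces $n\le m$ (monotonicity of the isoperimetric profile) \emph{and} $a\le b$ (monotonicity of asymptotic dimension, using $\mathrm{asdim}(\shufn{n}{A})=\mathrm{asdim}(A)=a$ for virtually abelian $A$); applying this to both regular maps yields $n=m$ and $a=b$, i.e.\ assertion (ii) of Theorem~\ref{th:IteratedShufflersPolynomialQIBiLip intro}, and that theorem closes the loop. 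You instead use only isoperimetric profiles: with regular maps in both directions, Theorem~\ref{th:profilemonotonuous} gives $\profp{\shufn{n}{A}}\simeq\profp{\shufn{m}{B}}$, a two-sided equivalence, and the computation from the proof of Theorem~\ref{th:IteratedShufflersPolynomialQIBiLip intro} then extracts both $n=m$ and $a=b$ from this single invariant. Your route dispenses with asymptotic dimension entirely, which is a genuine economy; what it does not give is the one-sided conclusion ($n\le m$ and $a\le b$ from one regular map), which the paper records as a result of independent interest and for which the profile alone is insufficient to control the growth degrees.

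Two points need tightening. First, ``matching the leading constants'' on the log-scale cannot yield $a=b$: after taking logarithms the factors $1/a$ and $1/b$ are mere multiplicative constants, which $\simeq$ absorbs. Once $n=m$ is known you must return to the un-logged relation $f(x)^{1/a}\simeq f(x)^{1/b}$ with $f(x)=\ln^{\circ n}(x)/\ln^{\circ(n+1)}(x)$ unbounded and satisfying $f(Cx)\simeq f(x)$; only then does comparing exponents force $1/a=1/b$. Second, Proposition~\ref{prop:profileofshufnofpolynomialgrowthgroups} is stated for $n\ge 1$, and for $n=0$ the profile is $x^{1/a}$ rather than $(x/\ln x)^{1/a}$, so the cases $n=0$ or $m=0$ require the separate (easy) observation that a group of exponential growth admits no regular map into a group of polynomial growth; after that the same log-comparison handles $n=m=0$.
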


Another interesting consequence of our computations of isoperimetric profiles is the following statement, which cannot be reached with methods from~\cite{GT24a}, even for quasi-isometric or coarse embeddings. Indeed, in the latter is introduced a key property, called the \textit{thick bigon property}, which is a crucial assumption for the study of quasi-isometries between halo products. Unfortunately, this property is not stable under iterations of lampshufflers, and cannot be used for $\shufn{n}{\Z^d}$ for instance.

\begin{propositionletter}[see Corollary~\ref{cor:iteratedshufintoiteratedlamplighterPolynomial}]
Let $d,k,n\ge 1$ be three integers. If there exists a regular map 
\begin{equation*}
\shufn{n}{\Z^d}\longrightarrow \Z/2\Z\wr(\Z/2\Z\wr(\dots(\Z/2\Z\wr \Z^k))),
\end{equation*}
where the wreath product is iterated $n$ times, then $d<k$. 
\end{propositionletter}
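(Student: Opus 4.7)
The plan is to exploit the monotonicity of the $\ell^p$-isoperimetric profile under regular maps: by~\cite[Theorem~5.5]{DKLMT22} (already invoked in the paper), if $X\to Y$ is a regular map between two finitely generated groups, then $\profp{Y}(x)\preccurlyeq \profp{X}(x)$. Denoting $W_n\defeq\Z/2\Z\wr(\Z/2\Z\wr(\dots(\Z/2\Z\wr \Z^k)))$ the iterated target, the existence of a regular map $\shufn{n}{\Z^d}\to W_n$ forces
\begin{equation*}
\profp{W_n}(x)\preccurlyeq \profp{\shufn{n}{\Z^d}}(x),
\end{equation*}
and it suffices to estimate both sides precisely enough to derive $d<k$.

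For the source, the iterated lampshuffler proposition stated earlier in the introduction (extending Theorem~\ref{th:boundsForProfile intro}) already gives
\begin{equation*}
\profp{\shufn{n}{\Z^d}}(x)\simeq \left(\frac{\ln^{\circ n}(x)}{\ln^{\circ(n+1)}(x)}\right)^{1/d}.
\end{equation*}
For the target, I would iterate Erschler's asymptotic $\profp{\Z/2\Z\wr G}(x)\simeq \profp{G}(\ln(x))$, whose base case $\profp{\Z/2\Z\wr \Z^k}(x)\simeq (\ln(x))^{1/k}$ is explicitly recalled in the paper's introduction. The regularity hypothesis (Assumption~$(\star)$ or an equivalent one) required at each step is trivially preserved, since the intermediate profiles $(\ln^{\circ i}(x))^{1/k}$ are stable under affine scaling of the argument. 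An $n$-fold iteration therefore yields
\begin{equation*}
\profp{W_n}(x)\simeq \left(\ln^{\circ n}(x)\right)^{1/k}.
\end{equation*}

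Combining these two equivalences with the monotonicity inequality gives
\begin{equation*}
\left(\ln^{\circ n}(x)\right)^{1/k} \preccurlyeq \left(\frac{\ln^{\circ n}(x)}{\ln^{\circ(n+1)}(x)}\right)^{1/d}.
\end{equation*}
Raising both sides to the $dk$-th power and rearranging leads to
\begin{equation*}
\left(\ln^{\circ(n+1)}(x)\right)^{k} \preccurlyeq \left(\ln^{\circ n}(x)\right)^{k-d}.
\end{equation*}
Since the left-hand side tends to infinity, the exponent on the right must be strictly positive, which forces $k-d>0$, i.e.\ $d<k$, as required.

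The main obstacle is rigorously justifying the iterated estimate $\profp{W_n}(x)\simeq (\ln^{\circ n}(x))^{1/k}$: although the base case $n=1$ is Erschler's classical theorem, extending the induction to wreath products over intermediate groups of exponential growth needs the general version of the formula $\profp{\Z/2\Z\wr G}\simeq \profp{G}(\ln(\cdot))$. This can either be cited from the wreath-product literature (the Coulhon--Grigoryan--Pittet F\o lner estimates together with the regularity of the intermediate profiles suffice) or established directly by a short induction in the spirit of Corollary~\ref{cor:corB}. Once this formula is in hand, the rest of the argument is purely a comparison of iterated logarithms.
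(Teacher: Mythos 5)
Your proposal is correct and follows essentially the same route as the paper: both compute $\profp{\shufn{n}{\Z^d}}(x)\simeq(\ln^{\circ n}(x)/\ln^{\circ(n+1)}(x))^{1/d}$ via Proposition~\ref{prop:profileofshufnofpolynomialgrowthgroups}, identify the target's profile as $(\ln^{\circ n}(x))^{1/k}$ by iterating Erschler's wreath-product formula (checking Assumption~$(\star)$ at each stage), and conclude by monotonicity of the profile under regular maps. The only cosmetic difference is that the paper first proves a general comparison $\prof{G}(x)\preccurlyeq\prof{H}(x)/(\ln(x))^{1/d}$ (Corollary~\ref{cor:iteratedshufintoiteratedlamplighter}) via a change of variables and then specializes, whereas you compare the iterated-logarithm expressions directly; both are equivalent.
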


In relation with the results from~\cite{GT24a}, we expect in fact that there is no regular map from $\shufn{n}{\Z^d}$ to $\Z/2\Z\wr(\Z/2\Z\wr(\dots(\Z/2\Z\wr \Z^k)))$, even when $d<k$. 

\smallskip

Finally, we emphasize here that similar results can be obtained for other halo products, such as lampjugglers, lampdesigners, and lampcloners (except when the base group has polynomial growth since we do not have a precise estimate of $\prof{\cloner{H}}$ in this case).

\paragraph{Plan of the paper.} After a few preliminaries in Section~\ref{sec:preliminaries}, we introduce halo products in Section~\ref{sec:halo}, with the main assumptions we will need to study them. Section~\ref{sec:computationsFolnerFunction} is devoted to the computation of F\o lner functions for halo products, and we deduce estimates for isoperimetric profiles in Section~\ref{sec:computationsIsoProf}. This finally implies existence and non-existence results of regular maps and quasi-isometries between such groups, see Section~\ref{sec:appQIandRegularMaps}. Lastly, Appendix~\ref{appendixA} presents various minimal algebraic assumptions under which lamplighters appear as subgroups of lampshufflers.

\paragraph{Acknowledgements.} We are grateful to our advisor Romain Tessera for his valuable guidance and support, and to Juan Paucar for many fruitful discussions. We are also thankful to Anthony Genevois, Jérémie Brieussel, Anna Erschler, and Eduardo Silva for their comments and suggestions on an earlier version of this paper.

\section{Notations and preliminaries}\label{sec:preliminaries}

\subsection{Notations}\label{sec:notations} 

Given non-decreasing functions $f,g\colon \R_{>0}\longrightarrow \R_{>0}$, we write $f(x)=O(g(x))$ if there exists $C>0$ such that $f(x)\le Cg(x)$ for all $x$ large enough, and $f(x)=o(g(x))$ if $\frac{f(x)}{g(x)}$ goes to $0$ as $x$ goes to $+\infty$. We write $f\sim g$, and we say that $f$ and $g$ are \textit{equivalent}, if $\frac{f(x)}{g(x)}$ goes to $1$ as $x$ goes to $+\infty$.  

\smallskip

A slightly weaker notion is the one of \textit{asymptotic equivalence}. Namely, if $f,g\colon \R_{+}\longrightarrow \R_{+}$ are non-decreasing, we say that $f$ is \textit{dominated} by $g$, and we write $f\preccurlyeq g$, if there is a constant $C>0$ such that $f(x)=O(g(Cx))$. We say that $f$ and $g$ are \textit{asymptotically equivalent}, written $f\simeq g$, if $f\preccurlyeq g$ and $g\preccurlyeq f$. Note that two equivalent functions are asymptotically equivalent.

\smallskip

Given a group $G$, we denote $1_{G}$ its neutral element, and if $G$ is generated by a finite set $S$, $\text{Cay}(G,S)$ refers to the Cayley graph of $G$ with respect to $S$, that is the graph whose vertices are elements of $G$ and whose edges are pairs of the form $(g,gs)$ with $g\in G$ and $s\in (S\cup S^{-1})\setminus\lbrace 1_{H}\rbrace$, while $|\cdot|_{S}$ denotes the usual length function on $G$ associated to $S$, and $d_{S}$ stands for the left-invariant word metric associated to $S$. The notation $V_{G,S}$ refers to the growth function of $G$, defined by $V_{G,S}(n)\defeq |\lbrace g\in G : |g|_{S}\le n\rbrace|$. Recall that its asymptotic behaviour, in the sense of $\simeq$, is independent of the choice of $S$.

\subsection{Coarse geometry and coarse maps} 

We now recall many basic concepts in the study of the large-scale geometry of metric spaces.

\smallskip

A map $f\colon (X,d_{X})\longrightarrow (Y,d_{Y})$ between two metric spaces is a \textit{coarse embedding} if there exist two functions $\sigma_{-},\sigma_{+}\colon [0,\infty)\longrightarrow [0,\infty)$ such that $\sigma_{-}(t)\longrightarrow\infty$ when $t\rightarrow\infty$ and such that 
\begin{equation*}
    \sigma_{-}(d_{X}(x,y)) \le d_{Y}(f(x),f(y)) \le \sigma_{+}(d_{X}(x,y))
\end{equation*}
for all $x,y\in X$. The maps $\sigma_{-}, \sigma_{+}$ are called the \textit{parameters} of $f$. If those parameters are both affine functions, we say that $f$ is a \textit{quasi-isometric embedding}, and without restrictions we may assume that $\sigma_{+}$ and $\sigma_{-}$ have multiplicative constants inverses of each other and that their additive constants differ by the sign. More precisely, given $C\ge 1$ and $K\ge 0$, we say that $f$ is a $(C,K)$-\textit{quasi-isometric embedding} if 
\begin{equation*}
    \frac{1}{C}\cdot d_{X}(x,y)-K \le d_{Y}(f(x),f(y)) \le C\cdot d_{X}(x,y)+K
\end{equation*}
for all $x,y\in X$. Additionally if $d_{Y}(y,f(X))\le K$ for all $y\in Y$, we say that $f$ is a \textit{quasi-isometry}, or a $(C,K)$-\textit{quasi-isometry}. When such a map exists, we say that $X$ and $Y$ are \textit{quasi-isometric}.

\smallskip

\sloppy For $C>0$, a $(C,0)$-quasi-isometry is usually called a \textit{biLipschitz equivalence} (note that those maps exactly coincide with bijective quasi-isometries), and a map $f\colon (X,d_{X})\longrightarrow (Y,d_{Y})$ satisfying only  
\begin{equation*}
    d_{Y}(f(x),f(y)) \le C\cdot d_{X}(x,y)
\end{equation*}
for any $x,y\in X$ is said to be \textit{$C$-Lipschitz}. Lastly, $f\colon X\longrightarrow Y$ is \textit{regular} if it is $C$-Lipschitz for some $C>0$ and pre-images of points have uniformly bounded cardinality: there is $m\ge 1$ such that 
$|f^{-1}(\lbrace y\rbrace)| \le m$ for any $y\in Y$. 

\smallskip

Note that any quasi-isometry is a quasi-isometric embedding, which is itself a coarse embedding, which is itself a regular map, but none of the reverse implications hold. For instance, the inclusion of a closed compactly generated subgroup in a locally compact compactly generated group is always a coarse embedding, while it is a quasi-isometry only if the subgroup is undistorted, and the map $\Z\longrightarrow\Z$, $n\longmapsto |n|$, is a regular map, while it is not a coarse embedding.

\subsection{Isoperimetric profiles} 

\paragraph{Isoperimetric profile for groups.} For a finitely generated group $G$ and a finite generating set $S_{G}$, its $\ell^{p}$-\textit{isoperimetric profile}, for $p\geq 1$, is the function $\profp{G}\colon \N\longrightarrow \R_{+}$ given by
\begin{equation*}
    \profp{G}(n)\defeq\sup_{\substack{f\colon G\to\R_+\\|\supp{f}|\leq n}}{\frac{\|f\|_{p}}{\|\nabla f\|_{p}}}
\end{equation*}
where the support of $f\colon G\longrightarrow\R_+$ is $\supp{f}\defeq\lbrace g\in G : f(g)\not=0\rbrace$ and the $\ell^p$-norm of its gradient is defined by $\|\nabla f\|_{p}^{p}\defeq\sum_{g\in G,\; s\in S_{G}}{|f(g)-f(gs)|^p}$. For $p=1$, the $\ell^1$-isoperimetric profile is simply called \textit{isoperimetric profile} and one has
\begin{equation*}
    \prof{G}(n) \simeq \sup_{|A|\le n}\frac{|A|}{|\partial_{G} A|}
\end{equation*}
where $\partial_{G}A \defeq AS_{G}\setminus A=\lbrace g\in G\setminus A : \exists s\in S_{G}, \exists h\in A, g=hs\rbrace$ is the \textit{boundary} of $A$ in $G$.

\smallskip

Recall also that the isoperimetric profile of a group $G$ is the generalised inverse of its \textit{F\o lner function} $\fol{G}\colon \N\longrightarrow\R_{+}$, defined as
 \begin{equation*}
     \fol{G}(n) \defeq \inf\left\lbrace |A| : \frac{|\partial_{G}A|}{|A|} \le \frac{1}{n} \right\rbrace.
 \end{equation*}

We more generally define the $\ell^p$-F\o lner function $\folp{G}\colon\N\longrightarrow \R_{+}$ for every $p\ge 1$, as 
\begin{equation*}
     \folp{G}(n) \defeq \inf\left\lbrace \left|\supp{f}\right| : \frac{\|\nabla f\|_p}{\|f\|_p} \le \frac{1}{n} \right\rbrace.
\end{equation*}
For every $p\geq 1$, $\folp{G}$ and $\profp{G}$ are generalised inverses of each other, and we have $\textup{F\o l}_{1,G}(x)\simeq\fol{G}(x)$. Thus, in the sequel, we will always write $\fol{G}$ instead of $\textup{F\o l}_{1,G}$.

\begin{remark}\label{rem:InverseEachOther}
Notice that, given the asymptotic behaviour of the $\ell^p$-F\o lner function, we can deduce the asymptotic behaviour of the $\ell^p$-isoperimetric profile, even though they are not real inverses of each other, but only generalised inverses \textit{a priori}. Indeed, a non-decreasing function $\R_{+}\longrightarrow \R_{+}$ is always asymptotically equivalent to an increasing function $\R_{+}\longrightarrow \R_{+}$ (cf.~\cite[Remark~1.2]{Corr24}) and it is not hard to check that $\simeq$ is preserved when passing to generalised inverses. Thus, in the sequel, we can and will assume that the $\ell^p$-F\o lner function and the $\ell^p$-isoperimetric profile are injective and then real inverses of each other. Hence, studying the asymptotic behaviour of $\profp{G}$ is the same as studying the asymptotic behaviour of $\folp{G}$. 
\end{remark}

The asymptotic behaviour of isoperimetric profiles is stable under quasi-isometries, and is in particular independent of the choice of a generating set for $G$. More generally, $\ell^p$-isoperimetric profiles are monotonuous when passing to finitely generated subgroups:

\begin{theorem}[{\cite[Lemma~4]{Ers03}}]\label{th:profilemonotonuousSubgroup}
Let $H$ be a finitely generated subgroup of a finitely generated group $G$. Then one has $\profp{G}(n) \preccurlyeq \profp{H}(n)$ for every $p\ge 1$.
\end{theorem}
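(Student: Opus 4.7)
The plan is to reduce a function on $G$ to its restrictions to left cosets of $H$, and then apply the definition of $\profp{H}$ coset by coset. The key elementary fact is that left translation preserves the right Cayley structure, so the $S_{H}$-edges of $\mathrm{Cay}(G,S_{G})$ split as a disjoint union of copies of $\mathrm{Cay}(H,S_{H})$, one for each left coset.

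First, since the asymptotic behaviour of $\profp{G}$ does not depend on the choice of finite generating set, I may freely enlarge $S_{G}$ so that $S_{H}\subseteq S_{G}$. Write a coset decomposition $G=\bigsqcup_{t\in T}tH$. Given an arbitrary non-negative $f\colon G\longrightarrow \R_{+}$ with $|\supp f|\le n$, introduce the translates
\begin{equation*}
    f_{t}\colon H\longrightarrow \R_{+},\qquad f_{t}(h)\defeq f(th),\quad t\in T.
\end{equation*}
Each $f_{t}$ has $|\supp f_{t}|\le n$, and the $p$-th powers of the $\ell^p$-norms add up exactly: $\|f\|_{p}^{p}=\sum_{t\in T}\|f_{t}\|_{p}^{p}$.

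The next step is to bound the gradient from below. For $g=th\in tH$ and $s\in S_{H}$ one has $gs=t(hs)\in tH$, so
\begin{equation*}
    \|\nabla_{G}f\|_{p}^{p}\;\ge\;\sum_{t\in T}\sum_{h\in H,\,s\in S_{H}}|f_{t}(h)-f_{t}(hs)|^{p}\;=\;\sum_{t\in T}\|\nabla_{H}f_{t}\|_{p}^{p},
\end{equation*}
using only the $S_{H}$-labelled edges of $\mathrm{Cay}(G,S_{G})$. Applying the definition of $\profp{H}$ to each $f_{t}$, and using that $\profp{H}$ is non-decreasing, gives $\|f_{t}\|_{p}^{p}\le\profp{H}(n)^{p}\,\|\nabla_{H}f_{t}\|_{p}^{p}$ for every $t\in T$. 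Summing over $T$ yields $\|f\|_{p}^{p}\le\profp{H}(n)^{p}\,\|\nabla_{G}f\|_{p}^{p}$, whence $\|f\|_{p}/\|\nabla_{G}f\|_{p}\le\profp{H}(n)$. Taking the supremum over all admissible $f$ on $G$ concludes $\profp{G}(n)\le\profp{H}(n)$, and hence $\profp{G}\preccurlyeq\profp{H}$.

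No serious obstacle is expected; the only points to watch are the harmless enlargement of the generating set (which is absorbed by $\preccurlyeq$) and the bookkeeping with $p$-th powers, which makes the coset-by-coset additivity of both sides work smoothly.
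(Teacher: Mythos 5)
Your argument is correct and is exactly the standard proof of the cited result of Erschler: after enlarging $S_{G}$ to contain $S_{H}$, the left cosets of $H$ partition $\mathrm{Cay}(G,S_{G})$ into disjoint translates of $\mathrm{Cay}(H,S_{H})$ along the $S_{H}$-edges, and the coset-wise additivity of $\|\cdot\|_{p}^{p}$ and of the restricted gradient does the rest. The paper does not reprove this lemma but uses precisely this partition-into-translates mechanism when it adapts \cite[Lemma~4]{Ers03} to the subgraphs $Y_{c}$ in the proof of Theorem~\ref{th:lowerboundonFol}, so your route matches the intended one.
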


This has been widely generalised to general regular maps by Delabie, Koivisto, Le Maître and Tessera, using connections with quantitative measure equivalence.

\begin{theorem}[{\cite[Theorem~1.5]{DKLMT22}}]\label{th:profilemonotonuous}
Let $G$ and $H$ be finitely generated amenable groups. If there exists a regular map from $G$ to $H$, then $\profp{H}(n) \preccurlyeq \profp{G}(n)$ for every $p\ge 1$.
\end{theorem}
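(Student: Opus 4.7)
The strategy is to pull back near-optimal test functions from $H$ to $G$ along the regular map, and to check that the $\ell^{p}$-ratio is preserved up to multiplicative constants. Fix a regular map $\varphi\colon G\longrightarrow H$ that is $C$-Lipschitz with fibres of cardinality at most $m$, and fix finite generating sets $S_{G}$ of $G$ and $S_{H}$ of $H$. Given a test function $f\colon H\longrightarrow \R_{+}$ with $|\supp f|\leq n$ realising, up to a factor close to $1$, the supremum defining $\profp{H}(n)$, consider its pullback $\tilde f\defeq f\circ\varphi\colon G\longrightarrow \R_{+}$. Then $|\supp\tilde f|\leq mn$, and using the fibre bound,
\begin{equation*}
    \|\tilde f\|_{p}^{p}=\sum_{h\in H}|\varphi^{-1}(\{h\})|\cdot f(h)^{p}\leq m\,\|f\|_{p}^{p}.
\end{equation*}

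The crux is to bound $\|\nabla\tilde f\|_{p}$ in terms of $\|\nabla f\|_{p}$. For each edge $(g,gs)$ of $\text{Cay}(G,S_{G})$, the Lipschitz condition yields $d_{H}(\varphi(g),\varphi(gs))\leq C$, so I may choose a geodesic path $\gamma_{g,s}$ of length at most $C$ in $\text{Cay}(H,S_{H})$ joining $\varphi(g)$ to $\varphi(gs)$. The convexity of $t\longmapsto t^{p}$ applied to the telescoping sum along $\gamma_{g,s}$ then gives
\begin{equation*}
    |\tilde f(g)-\tilde f(gs)|^{p}\leq C^{p-1}\sum_{(h,h')\in \gamma_{g,s}}|f(h)-f(h')|^{p}.
\end{equation*}
Summing over all edges of $\text{Cay}(G,S_{G})$ and swapping the order of summation yields $\|\nabla\tilde f\|_{p}^{p}\leq C^{p-1}\cdot M\cdot \|\nabla f\|_{p}^{p}$, where $M$ is a uniform bound on the number of paths $\gamma_{g,s}$ traversing any fixed edge of $\text{Cay}(H,S_{H})$. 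To bound $M$, observe that if an edge $e$ lies in $\gamma_{g,s}$, then $\varphi(g)$ sits in a ball of radius $C$ around an endpoint of $e$, providing at most $V_{H,S_{H}}(C)$ possibilities for $\varphi(g)$; the fibre bound then leaves at most $m$ choices for $g$, and at most $|S_{G}|$ choices for $s$, so $M\leq m\cdot V_{H,S_{H}}(C)\cdot|S_{G}|$.

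Combining the two estimates produces a constant $\kappa>0$, depending only on $p$, $m$, $C$ and the generating sets, such that $\|\tilde f\|_{p}/\|\nabla\tilde f\|_{p}\geq \kappa\cdot \|f\|_{p}/\|\nabla f\|_{p}$. Taking the supremum over $f$ yields $\profp{G}(mn)\succcurlyeq \profp{H}(n)$, that is $\profp{H}(n)\preccurlyeq\profp{G}(n)$, as desired. The main technical step I expect is the uniform multiplicity bound $M$, which crucially combines the polynomial-in-$C$ growth of balls in $H$ with the fibre bound coming from regularity; once this is in hand, the rest reduces to a clean Jensen-type computation. Note that amenability itself is not used in the argument; it only ensures that both profiles diverge, so that the conclusion carries non-trivial content.
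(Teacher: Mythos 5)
There is a genuine gap, and it sits exactly where the real difficulty of this theorem lies. Your final inequality $\|\tilde f\|_{p}/\|\nabla\tilde f\|_{p}\geq\kappa\cdot\|f\|_{p}/\|\nabla f\|_{p}$ requires a \emph{lower} bound on $\|\tilde f\|_{p}$ in terms of $\|f\|_{p}$, but the only estimate you establish, $\|\tilde f\|_{p}^{p}\leq m\|f\|_{p}^{p}$, is an upper bound and is useless for that purpose. No lower bound of the form $\|\tilde f\|_{p}\geq c\|f\|_{p}$ holds in general: the pullback $f\circ\varphi$ only sees the values of $f$ on the image $\varphi(G)$, which need not meet $\supp f$ at all (so $\tilde f$ may vanish identically), and even after translating $f$ so that its support is centred on $\varphi(G)$, the image may capture only a negligible fraction of $\|f\|_{p}^{p}$. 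For instance, for the regular map $\Z\longrightarrow\Z^{2}$, $k\longmapsto (k,0)$, and $f$ the indicator function of a ball of radius $r$, one has $\|\tilde f\|_{1}\asymp r$ while $\|f\|_{1}\asymp r^{2}$. Your gradient estimate, the multiplicity bound $M$, and the support bound are all fine; it is only the norm comparison that breaks, and it breaks irreparably for this strategy. The closing remark that ``amenability itself is not used'' should have been a warning sign: if the pullback argument worked, it would prove monotonicity of the profile under regular maps with no amenability hypothesis, whereas amenability is genuinely needed.

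For comparison: the paper does not prove this statement at all; it quotes it from \cite[Theorem~1.5]{DKLMT22}, whose proof does not pull back test functions but instead upgrades the regular map to a quantitative measure coupling between $G$ and $H$ and transfers F\o lner-type estimates through that coupling. Even the special case of a subgroup inclusion (Theorem~\ref{th:profilemonotonuousSubgroup}, used repeatedly in the paper) is proved by a dual mechanism --- restricting a test function on the ambient group to a well-chosen coset --- rather than by pullback. The part of your argument that can be salvaged is the case where $\varphi(G)$ is coarsely dense in $H$ (e.g.\ a quasi-isometry): there, every point of $\supp f$ lies within bounded distance of $\varphi(G)$, and a nearest-point averaging supplies the missing lower bound on $\|\tilde f\|_{p}$. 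For a general regular map this fails, and overcoming that failure is precisely the content of the cited theorem.
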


Equivalently, both results can be stated in terms of F\o lner functions.

\smallskip

Note that a group is amenable if and only if its isoperimetric profile is unbounded. The idea to keep in mind is that the isoperimetric profile is a measurement of how much amenable a group is. The faster the isoperimetric profile tends to infinity, the more the group is amenable. In particular, the isoperimetric profile is a particularly well suited invariant to distinguish amenable groups with exponential growth up to quasi-isometries or regular maps, and it has now been computed for many classes of groups, among which:
\begin{itemize}
    \item $\profp{G}(n) \simeq n^{\frac{1}{d}}$ if $G$ has polynomial growth of degree $d\ge 1$;
    \item $\profp{G}(n)\simeq \ln(n)$ for solvable Baumslag-Solitar groups and lamplighters $F\wr\Z$, where $F$ is a non-trivial finite group;
    \item $\profp{G}(n) \simeq \ln(n)$ for any polycyclic group with exponential growth~\cite{Pit95, Pit00}, or more generally any exponential growth group within the class GES of Tessera~\cite[Corollary 5]{Tes13};
    \item $\profp{F\wr N}(n)\simeq \ln(n)^{\frac{1}{d}}$ with $F$ finite and non-trivial, and $N$ of growth degree $d\ge 1$~\cite{Ers03}.
\end{itemize}

More generally,~\cite[Theorem 1]{Ers03} provides a general formula for computing F\o lner functions of wreath products, so for instance the last example can be extended to iterated wreath products. Let us explain with more details Erschler's result, since it will play an important role in the sequel.

\smallskip

Let $G$ and $H$ be finitely generated groups and assume that for every $C>0$, there exists $k>0$ such that $\fol{H}(kn)>C\cdot \fol{H}(n)$ for any large enough integer $n$. Then we have
\begin{equation*}
    \fol{G\wr H}(x)\simeq\left(\fol{G}(x)\right)^{\fol{H}(x)}.
\end{equation*}
This assumption on $\fol{H}$, introduced in~\cite{Ers03}, can also be stated in terms of isoperimetric profile in the following way: for every $C>0$, we have $\prof{H}(Cx)=O\left(\prof{H}(x)\right)$. This assumption also appeared in~\cite{Corr24} and we do not have any example of a finitely generated group for which it does not hold. In Section~\ref{sec:computationsIsoProf}, we will make use of this mild assumption, and we will refer to it as \textit{Assumption}~($\star$). In~\cite{Ers03}, this assumption is used to get rid of some constant appearing in the lower bound: there exists $C>0$ such that
\begin{equation}\label{eq:ConstantAssumptionStar}
    \fol{G\wr H}(x)\succcurlyeq\left (\fol{G}(x)\right )^{C\fol{H}(x)};
\end{equation}
whereas the upper bound is exactly $\fol{G\wr H}(x)\preccurlyeq\left (\fol{G}(x)\right )^{\fol{H}(x)}$. In the particular case of a finite group $G$, we have $\prof{G\wr H}(x)\simeq\prof{H}(\ln(x))$ if $\prof{H}$ satisfies Assumption~($\star)$.

\smallskip

The lower bound~\eqref{eq:ConstantAssumptionStar} also holds in the context of lamplighter graphs, see Section~\ref{sec:secondstrategy}. To relate the work of Erschler on the $\ell^1$-F\o lner function with the $\ell^p$-F\o lner function (for $p\geq 1$) that we want to compute for halo products, we will first have to reduce the proof to the case $p=1$, thanks to the well-known fact that if $p\geq 1$, then the $\ell^p$-F\o lner function dominates the $\ell^1$-F\o lner function (see Lemma~\ref{lem:MonotonuousProfile}). In fact, it is conjectured that the $\ell^p$-F\o lner functions, for $p\ge 1$, all have the same asymptotic behaviour.

\medskip

A fundamental result in geometric group theory is the one of Coulhon and Saloff-Coste, who proved in~\cite{CSC93} that for a finitely generated group $G$ and a finite symmetric generating set $S$ of $G$, one has 
\begin{equation*}
    \frac{|\partial_{G}F|}{|F|} \ge \frac{1}{4|S|}\cdot\frac{1}{\Phi_{G}(2|F|)}
\end{equation*}
for any finite set $F\subset G$, where $\Phi_{G}\colon \R_{>0} \longrightarrow \N$, $\Phi_{G,S}(t) \defeq \min\lbrace n \ge 0 : V_{G,S}(n)>t\rbrace$ is the \textit{inverse growth function} of $G$. Since then, it has constantly been improved to thiner inequalities, see for instance~\cite{PS22}. Inverting this inequality and taking the sup, one directly gets the upper bound 
\begin{equation*}
    \prof{G}(n) \preccurlyeq \Phi_{G,S}(n)
\end{equation*}
on the $\ell^{1}$-isoperimetric profile of $G$. This upper bound is optimal when $G$ has polynomial growth, while if it has exponential growth, one only gets $\prof{G}(n) \preccurlyeq \ln(n)$. In fact,~\cite[Theorem~1.1]{BZ21} describes a large class of possible asymptotic behaviours for isoperimetric profiles of finitely generated groups with exponential growth, namely for any non-decreasing function $f$ such that $x\longmapsto \frac{x}{f(x)}$ is non-decreasing, there exists a finitely generated group of exponential growth whose $\ell^p$-isoperimetric profile is $\simeq \frac{\ln(x)}{f(\ln(x))}$ for every $p\geq 1$.

\smallskip

Isoperimetric profiles are also particularly studied for their relations with return probabilities of random walks on groups, see for instance~\cite{SCZ15, SCZ16, SCZ18, BZ21}.

\paragraph{Isoperimetric profile for graphs.} Isoperimetric profiles can be defined, more generally, in the framework of bounded degree graphs, without any underlying algebraic structure. In this paper, we only focus on the case $p=1$, but similar definitions can be made for $p>1$.

\smallskip

Since there will be no ambiguity, we will abusively use the same notation for a graph and the set of its vertices. Given a graph $Y$, the presence of an edge between two vertices $v$ and $w$ will be denoted by $v\sim_{Y}w$.

\smallskip

Given a graph $Y$, its isoperimetric profile is the function $\prof{Y}\colon\N\longrightarrow\R_{+}$ defined by
\begin{equation*}
    \prof{Y}(n)\defeq\sup_{|A|\le n}{\frac{|A|}{|\partial_{Y} A|}}
\end{equation*}
where, given a finite set $A\subset Y$ of vertices, $\partial_{Y}A\defeq\lbrace v\in Y\setminus A : \exists a\in A, v\sim_{Y} a\rbrace$ is the \textit{boundary} of $A$ in the graph $Y$.

\smallskip

In the case of Cayley graphs of finitely generated groups, we recover the corresponding notion of isoperimetric profile of groups, defined above. Moreover, the invariance of isoperimetric profile under quasi-isometry is still valid in this more general setup. Finally, we analogously define the F\o lner function of a graph.

\smallskip

This setup of graphs will be crucial in our paper. Indeed, in Section~\ref{sec:secondstrategy}, we will define lamplighter graphs and will require a lower bound of their F\o lner functions.

\section{Halo products}\label{sec:halo}

In this section, we define halo products and the main classes we are interested in.

\subsection{Halo products: definition and main examples}\label{sec:defHalo} 

\begin{definition}
Let $X$ be a set. A~\textit{halo of groups $\halo$ over $X$} is the data, for any subset $S\subset X$, of a group $L(S)$ such that:
\begin{itemize}
    \item for all $R,S\subset X$, if $R\subset S$ then $L(R)\leqslant L(S)$;
    \item $L(\emptyset)=\lbrace 1\rbrace$ and $L(X)=\langle L(S) : S\subset X \;\text{finite}\rangle$;
    \item for all $R,S\subset X$, $L(R\cap S)=L(R)\cap L(S)$.
\end{itemize}
\end{definition}

Given an action $H\curvearrowright X$ and a morphism $\alpha\colon H \longrightarrow \text{Aut}(L(X))$ satisfying $\alpha(h)(L(S))=L(hS)$ for any $S\subset X$ and $h\in H$, the~\textit{permutational halo product} $\halo_{X,\alpha}H$ is the semi-direct product 
\begin{equation*}
    \halo_{X,\alpha}H \defeq L(X)\rtimes_{\alpha}H.
\end{equation*}

The definition is motivated by permutational wreath products, which are basic examples of permutational halo products. Indeed, given groups $F,H$ and an action $H\curvearrowright X$, set $L(S)\defeq \bigoplus_{S}F$ for any $S\subset X$. Then $\halo_{X,\alpha}H$ coincides with $F\wr_{X}H$, where $\alpha$ is the action of $H$ on $\bigoplus_{X}F$ obtained by permuting the coordinates through the initial action $H\curvearrowright X$. In particular, for $X=H$ and the left-multiplication action of $H$ on itself, we recover a description of the wreath product $F\wr H$ as a halo product. 

\smallskip

Let us now describe other examples of halo products. From now on, we only focus on halo products with $X=H$, that we simply denote by $\halo H$, for the natural action of $H$ on itself by left-multiplication.

\paragraph{Lampshufflers.} Let $H$ be a group, and let $\fsym{H}$ be the group of~\textit{finitely supported} permutations of $H$, that is the group of bijections $H\longrightarrow H$ that are the identity outside a finite subset of $H$. The group $H$ acts naturally on $\fsym{H}$, via 
\begin{equation*}
    (h\cdot\sigma)(x) \defeq h\sigma(h^{-1}x), \; x\in H
\end{equation*}
for any $h\in H$ and $\sigma\in\fsym{H}$. Indeed, if $\sigma\colon H\longrightarrow H$ is a finitely supported bijection and $h\in H$, then so is $h\cdot \sigma$ and $\supp{(h\cdot\sigma)}=h\cdot\supp{(\sigma)}$, where $\supp{(\sigma)} \defeq\lbrace x\in H : \sigma(x) \neq x\rbrace$.

\smallskip

The~\textit{lampshuffler group over $H$}, denoted $\shuf{H}$, is then defined as the semidirect product 
\begin{equation*}
    \shuf{H}\defeq \fsym{H} \rtimes H.
\end{equation*}

It coincides with the halo product $\halo H$ where $L(S)\defeq \fsym{S}$ for any $S\subset H$. Additionally, if $H$ is finitely generated and $S_{H}$ denotes a finite generating set, then $\shuf{H}$ is generated by the finite set 
\begin{equation*}
    \Sigma_{H}\defeq\left\lbrace(\tau_{1_{H},s},1_{H}) : s\in S_{H}\right \rbrace\cup\left\lbrace(\text{id},s) : s\in S_{H}\right\rbrace
\end{equation*}
where, given any $x,y\in H$, $\tau_{x,y}\in\fsym{H}$ is the transposition that swaps $x$ and $y$, that is $\tau_{x,y}(x)=y$, $\tau_{x,y}(y)=x$ and $\tau_{x,y}(h)=h$ for any $h\neq x,y$. 

An element $(\sigma,h)\in\shuf{H}$ can be seen as a labelling of the vertices of the Cayley graph $\text{Cay}(H,S_{H})$ (a vertex $p\in H$ carries the label $\sigma(p)$) together with an arrow pointing at some vertex $h\in H$, and there are two types of moves in $\text{Cay}(\shuf{H},\Sigma_{H})$ to go from $(\sigma, h)$ to a neighbouring vertex:
\begin{itemize}
    \item either the arrow goes from $h$ to a neighbouring vertex in $H$;
    \item or the arrow stands on the vertex $h\in H$, and swaps its label with the label of one of its neighbours in $H$.
\end{itemize} 

\paragraph{Lampjugglers.} Lampshufflers are in fact particular instances of a broader family of groups, called~\textit{lampjugglers}. Given a group $H$ and an integer $r\ge 1$, the~\textit{lampjuggler over $H$} is the semi-direct product
\begin{equation*}
    \juggler{r}{H} \defeq \fsym{H\times\lbrace 1,\dots,r\rbrace} \rtimes H
\end{equation*}
where $H$ acts on $\fsym{H\times\lbrace 1,\dots,r\rbrace}$ through its initial action on $H\times\lbrace 1,\dots,r\rbrace$ given by $h\cdot (x,i) \defeq (hx, i)$. It can be described as the halo group $\halo H$ where $L(S)\defeq \fsym{S\times\lbrace 1,\dots, r\rbrace}$, $S\subset H$. As for lampshufflers, lampjugglers over finitely generated groups are finitely generated, and one can check that if $S_{H}$ is a finite generating set for $H$, then the finite set
\begin{equation*}
    \lbrace (\tau_{(1_{H},i),(s,j)}, 1_{H}) : s\in S_{H}, 1\le i,j\le r\rbrace  \cup \lbrace (\text{id},s) : s\in S_{H}\rbrace
\end{equation*}
generates $\juggler{r}{H}$. Here, an element $(\sigma, h)\in\juggler{r}{H}$ can be seen as a labelling of the vertices of $\text{Cay}(H,S_{H})\times\lbrace 1,\dots,r\rbrace$ together with an arrow pointing at some vertex $h\in H$. Right-multiplying $(\sigma, h)$ by a generator from the above set amounts either to move the arrow from $h$ to a neighbouring vertex $hs$ in $H$, or to keep the arrow on $h\in H$ and switching the labels of two vertices in $h \times\lbrace 1, \dots, r\rbrace$ and $hs \times \lbrace1, \dots, r\rbrace$ for some neighbour $hs$ of $h$.

\paragraph{Lampdesigners.} Let $F$ and $H$ be two groups. The~\textit{lampdesigner over $H$} is the semi-direct product 
\begin{equation*}
    \designer{H} \defeq (F\wr_{H}\fsym{H})\rtimes H
\end{equation*}
where $H$ acts on $\bigoplus_{H}F$ by permuting the coordinates through its action on itself by left-multiplication and acts on $\fsym{H}$ as described above. It is the halo product $\halo H$ for the collection $L(S)\defeq F\wr_{S}\fsym{S}$, $S\subset H$. 

Lampdesigners are close from lampjuggler groups, and in fact if $F$ is finite, $\designer{H}$ is a subgroup of $\juggler{|F|}{H}$, via the map 
\begin{align*}
    \begin{array}{cll}
    \designer{H} &\longrightarrow &\juggler{|F|}{H} \\
    ((f,\sigma),h) &\longmapsto &(\sigma', h)
    \end{array}
\end{align*}
where, given a pair $(f,\sigma)\in F\wr_{H}\fsym{H}$, $\sigma'$ is the permutation of $H\times F$ given by $\sigma'(h,i)=(\sigma(h), f(h)i)$. Note also that $\designer{H}$ contains $\shuf{H}$ as a subgroup. 

\paragraph{Lampcloners.} Let $H$ be a group and let $\field$ be a field. Denote $V_{H}$ the $\field$-vector space admitting $H$ as a basis, and denote $\lbrace e_{u} : u\in H\rbrace$ a formal basis. Let $\text{FGL}(H)$ be the group of linear automorphisms $V_{H}\longrightarrow V_{H}$ that fix all but finitely many basis elements. This group can also be seen as the group of finitely supported invertibles matrices with coefficients in $\field$ whose entries are indexed by $H\times H$. Once again, the action of $H$ on itself naturally yields an action of $H$ on $\text{FGL}(H)$. The \textit{lampcloner over $H$} is the semi-direct product 
\begin{equation*}
    \cloner{H} \defeq \text{FGL}(H)\rtimes H.
\end{equation*}
It is a halo product, for the collection $L(S)\defeq \text{FGL}(S)$, for every $S\subset H$, where $\text{FGL}(S)$ is thought of as the subgroup of $\text{FGL}(H)$ of linear automorphisms $V_{H}\longrightarrow V_{H}$ that fix $H\setminus S$ and that stabilise the subspace $\langle S\rangle\subset V_{H}$. 

In addition, if $\field$ is finite and if $H=\langle S_{H}\rangle$ is finitely generated, then the finite set
\begin{equation*}
    \lbrace (\delta_{1_{H}}(\lambda), 1_{H}) : \lambda \in \field\setminus\lbrace 0\rbrace\rbrace\cup\lbrace (\tau_{1_{H},s}(\lambda),1_{H}) : s\in S_{H}, \lambda \in \field\setminus\lbrace 0\rbrace\rbrace \cup\lbrace (\text{id}_{V_H}, s) : s\in S_{H}\rbrace
\end{equation*}
generates $\cloner{H}$, where, given $p,q\in H$ and $\lambda\in \field\setminus\lbrace 0\rbrace$, $\delta_{p}(\lambda)$ is the~\textit{diagonal matrix} 
\begin{align*}
    \delta_{p}(\lambda)\colon
    \begin{array}{cll}
    V_{H}&\longrightarrow & V_{H}\\
    \displaystyle \sum_{h\in H}\mu_{h}e_{h} &\longmapsto &\displaystyle \sum_{h\neq p}\mu_{h}e_{h}+\lambda\mu_{p}e_{p}
    \end{array}
\end{align*}
and $\tau_{pq}(\lambda)$ is the~\textit{transvection} 
\begin{align*}
    \tau_{pq}(\lambda)\colon
    \begin{array}{cll}
    V_{H}&\longrightarrow &V_{H} \\
    \displaystyle \sum_{h\in H}\mu_{h}e_{h} &\longmapsto &\displaystyle \sum_{h\neq p}\mu_{h}e_{h}+(\mu_{p}+\lambda\mu_{q})e_{p}
    \end{array}.
\end{align*}
Thus, thinking of an element $(\varphi,p)\in\cloner{H}$ as a labelling of $\text{Cay}(H,S_{H})$ (the vertex $h\in H$ has the label $\varphi(e_{h})\in V_{H}$), together with an arrow pointing at $p\in H$, right multiplying $(\varphi,p)$ by a generator from the above set amounts either to move the arrow to an adjacent vertex $q$ of $p$ in $H$; or to keep the arrow where it stands and multiply $\varphi(e_{p})$ by a non-trivial element of $\field$; or to keep the arrow where it stands and to~\textit{clone} the label $\varphi(e_{p})$ and add it to the label of a neighbour of $p$ after multiplication by an element of $\field\setminus\lbrace 0\rbrace$. 

\medskip

We refer the reader to~\cite[Section~2]{GT24a} for many other possible constructions, such as lampbraiders and verbal halo products, that encompass for instance nilpotent and metabelian wreath products.

\smallskip 

As mentioned in the introduction, the challenging part for the computation of isoperimetric profiles is to find the optimal upper bound. This is done using the monotonicity of the isoperimetric profiles when passing to suitable substructures such as subgroups or even subgraphs. Using "good" subgroups is a technique known widely by the experts in the case of lampshufflers, but requires some algebraic assumptions on the base groups (see Appendix~\ref{appendixA} for more details). It is now a good place to observe that the other examples of halo products we give above already have lamplighters as subgroups, with the same base groups. More precisely:
\begin{itemize}
    \item For a lampjuggler $\juggler{s}{H}$, with $s\ge 2$, we consider the subgroup 
    \begin{equation*}
    G\defeq \lbrace (\sigma, h)\in\juggler{s}{H} : \sigma(\lbrace k\rbrace\times F)=\lbrace k\rbrace\times F \;\text{for all $k\in H$}\rbrace.
    \end{equation*}
    One can check directly that $G$ is isomorphic to $\sym{\lbrace 1,\dots,r\rbrace}\wr H$.
    \item For a lampdesigner $\designer{H}$, we notice that $\bigoplus_{H}{F}$ is a subgroup of $F\wr_{H}\sym{H}$, invariant under the natural action of $H$ on $F\wr_{H}\sym{H}$, so $F\wr H$ is a subgroup of $\designer{H}$.
    \item Considering the subgroup of $\mathrm{FGL}(H)$ generated by diagonal matrices $\delta_{h}(\lambda)$ for $h\in H$ and $\lambda\in\field\setminus\lbrace 0\rbrace$, we easily prove that $(\field\setminus\lbrace 0\rbrace)\wr H$ is a subgroup of $\cloner{H}$.
\end{itemize}

Thus, for these examples, techniques from Section~\ref{sec:computationsFolnerFunction} are unnecessary. Let us then give new examples of halo products which do not contain "good" lamplighter subgroups \textit{a priori}.

\paragraph{Lampupcloners.} Let $H$ be a totally ordered group. Let $\field$ be a field. Denote $V_{H}$ the $\field$-vector space admitting $H$ as a basis, and denote $\lbrace e_{u} : u\in H\rbrace$ a formal basis. Let $\text{FU}(H)$ be the subgroup of $\text{FGL}(H)$ generated by the transvections $\tau_{p,q}(\lambda)$ for $p<q$ and $\lambda\in\field$. This group can also be seen as the group of finitely supported upper triangular matrices with coefficients in $\field$, whose entries are indexed by $H\times H$, and with diagonal entries equal to $1$. Once again, the action of $H$ on itself naturally yields an action of $H$ on $\text{FU}(H)$. The~\textit{lampupcloner over $H$} is the semi-direct product 
\begin{equation*}
    \upcloner{H} \defeq \text{FU}(H)\rtimes H.
\end{equation*}
It is a halo product, for the collection $L(S)\defeq \text{FU}(S)$, for every $S\subset H$, where $\text{FU}(S)$ is thought of as the subgroup of $\text{FU}(H)$ of linear automorphisms $V_{H}\longrightarrow V_{H}$ that fix $H\setminus S$ and that stabilise the subspace $\langle S\rangle\subset V_{H}$. Indeed, to prove the property with the intersection in the definition of halo product, we notice that $\text{FU}(S)$ is the set of linear automorphisms $\varphi\in\text{FGL}(S)$ satisfying $(e_{q})_{*}(\varphi(e_{p}))=0$ for every $p,q\in H$ satisfying $p<q$ (where $((e_{h})_{*})_{h\in H}$ is the family of coordinate functions for the basis $(e_{h})_{h\in H}$ of $V_H$). Notice that here we need crucially a total order on $H$.

\smallskip

We do not know if the finite set
\begin{equation*}
    \lbrace (\tau_{1_{H},s}(\lambda),1_{H}) : s\in S_{H}\rbrace \cup\lbrace (\text{id}_{V_H}, s) : s\in S_{H}\rbrace
\end{equation*}
always generates $\upcloner{H}$, where $S_H$ is a generating subset of $H$ satisfying $s\geq 1_H$ for every $s\in S_H$. In Proposition~\ref{prop:UpclonerFG}, we prove that this is the case for $H=\Z^d$, endowed with the lexicographic order.

\subsection{Important assumptions}\label{sec:ImportantAssumption} 

Our main results deal with halo products satisfying various important assumptions that we introduce in this section.

\subsubsection{Large-scale commutativity} 

The first one has been introduced in~\cite{GT24a}, under the terminology~\textit{large-scale commutativity}.
\begin{definition}
Let $\halo H$ be a halo product over a finitely generated group $H$, and let $S_{H}$ be a finite generating set of $H$. We say that $\halo H$ is \textit{large-scale commutative} if there exists a constant $D\ge 0$ such that, for any $R,S\subset H$ with $d_{S_{H}}(R,S)\ge D$, the subgroups $L(R)$ and $L(S)$ commute in $L(H)$.
\end{definition}

This notion plays a key role in the quasi-isometry classification of halo groups established in~\cite{GT24a}, see e.g.~\cite[Theorem~6.3 and Theorem~6.6]{GT24a}. Examples of large-scale commutative halo products include lamplighters ($D=0$), lampshufflers ($D=1$), lampcloners ($D=1$) and lampupcloners ($D=1$).

\subsubsection{Finite generating sets} 

Let us now turn to terminologies more specific to halo groups over finitely generated groups. Inspired by lampshufflers, when we are looking for a generating set of a general halo product, there is a natural candidate. The~\textit{natural generation property}, that we now introduce, is by definition satisfied by a halo product having
this natural candidate as generating set.

\begin{definition}
Let $H$ be a finitely generated group, with a finite generating subset $S_{H}$. We say that a halo product $\halo H$ over $H$ is~\textit{naturally generated} if it is generated by the set
\begin{equation*}
\lbrace (1_{L(H)},s) : s\in S_{H}\rbrace \cup \bigcup_{s\in S_{H}}\lbrace (\sigma, 1_{H})\in\halo H : \sigma\in L(\lbrace 1_{H},s\rbrace)\rbrace.
\end{equation*}
\end{definition}

We already know from the previous section that our running examples, except lampupcloners, are naturally generated. In this section, we actually prove it once more, highlighting a more general phenomenon. We will also prove that lampupcloners over free abelian groups (endowed with the lexicographic order) are naturally generated.

\smallskip 

Let us now introduce a terminology relative to the generation for blocks of a halo product. 

\begin{definition}
We say that a halo product $\halo H$ has~\textit{finite} (resp.~\textit{finitely generated}) blocks if, for any finite subset $S\subset H$, $L(S)$ is finite (resp. finitely generated).
\end{definition}

For instance, a wreath product $F\wr H$, where $F$ is finitely generated, has finitely generated blocks. Moreover, lampjugglers, lampdesigners, lampcloners and lampupcloners have finite blocks, so they have finitely generated blocks.

\smallskip 

If a naturally generated halo product has finitely generated blocks, then it has a natural finite generating set:

\begin{fact}\label{fact:BlocksHaloFG}
Let $H$ be a finitely generated group and let $S_{H}$ be a finite symmetric generating set of $H$. Let $\halo H$ be a halo product over $H$. Suppose that $\halo H$ is naturally generated and has finitely generated blocks. Then the finite set
\begin{equation*}
    S_{\halo H}\defeq\lbrace (1_{L(H)},s) : s\in S_{H}\rbrace \cup \bigcup_{s\in S_{H}}\lbrace (\sigma, 1_{H})\in\halo H : \sigma\in S(s)\rbrace
\end{equation*}
generates $\halo H$, where $S(s)$ is any finite generating subset of $L(\lbrace 1_{H},s\rbrace)$.\qed
\end{fact}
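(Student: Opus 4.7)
The plan is to show that the finite set $S_{\halo H}$ generates the same subgroup of $\halo H$ as the (typically infinite) natural generating set
\begin{equation*}
    T \defeq \lbrace (1_{L(H)},s) : s\in S_{H}\rbrace \cup \bigcup_{s\in S_{H}}\lbrace (\sigma, 1_{H})\in\halo H : \sigma\in L(\lbrace 1_{H},s\rbrace)\rbrace.
\end{equation*}
Since $\halo H$ is naturally generated, it suffices to check that each element of $T$ lies in $\langle S_{\halo H}\rangle$. The first family of generators in $T$ is literally contained in $S_{\halo H}$, so the only thing to verify is that every element of the form $(\sigma,1_{H})$ with $\sigma\in L(\lbrace 1_{H},s\rbrace)$ is a product of elements of $S_{\halo H}\cup S_{\halo H}^{-1}$.

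For this, I would use the assumption that $\halo H$ has finitely generated blocks, which ensures that for every $s\in S_{H}$ the group $L(\lbrace 1_{H},s\rbrace)$ is finitely generated, so that a finite generating set $S(s)$ as in the statement indeed exists. Given $\sigma\in L(\lbrace 1_{H},s\rbrace)$, write $\sigma=\sigma_{1}\cdots\sigma_{k}$ with $\sigma_{i}\in S(s)\cup S(s)^{-1}$. The key observation is then that the semi-direct product multiplication, combined with the fact that the action of $1_{H}\in H$ on $L(H)$ is trivial, gives
\begin{equation*}
    (\sigma_{1},1_{H})(\sigma_{2},1_{H})\cdots(\sigma_{k},1_{H}) = (\sigma_{1}\sigma_{2}\cdots\sigma_{k},1_{H}) = (\sigma,1_{H}).
\end{equation*}
Each factor $(\sigma_{i},1_{H})$ lies in $S_{\halo H}\cup S_{\halo H}^{-1}$, which proves that $(\sigma,1_{H})\in\langle S_{\halo H}\rangle$, and hence $T\subset \langle S_{\halo H}\rangle$.

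Combining these two steps yields $\langle S_{\halo H}\rangle \supset \langle T\rangle = \halo H$, so $S_{\halo H}$ is a finite generating set of $\halo H$. The argument is essentially bookkeeping: the only non-formal input is the trivial action of $1_{H}$ in the semi-direct product, which lets us translate a word in $S(s)$ inside $L(\lbrace 1_{H},s\rbrace)$ into a word in the lamp-type generators inside $\halo H$ without having to track any $H$-conjugation.
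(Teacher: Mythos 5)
Your argument is correct and is precisely the immediate verification the paper leaves to the reader (the Fact is stated with no written proof): replace each infinite family $L(\lbrace 1_H,s\rbrace)$ in the natural generating set by the finite set $S(s)$, using that $(\sigma_1,1_H)\cdots(\sigma_k,1_H)=(\sigma_1\cdots\sigma_k,1_H)$ since $1_H$ acts trivially. Nothing is missing.
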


Natural generation property turns out to be equivalent to another property, that we call the~\textit{decreasing length property} and that we now define. This equivalent definition will lead us to a proof by induction when we will need to check that a given halo product is naturally generated.

\begin{definition}
Let $H$ be a finitely generated group, with finite generating set $S_{H}$, and let $\halo H$ be a halo product over $H$. Given a finite subset $R\subset H$, we define its~\textit{length} as
\begin{equation*}
        |R|_{H}\defeq\sum_{h\in R}{|h|_{H}}.
\end{equation*}
We say that $\halo H$ has the~\textit{decreasing length property} if for every finite subset $R\subset H$ such that $|R|_H\geq 2$, there exist a positive integer $k\ge 1$ and $k$ subsets $R_{1},\dots, R_{k}$ of $H$ such that
\begin{itemize}
    \item for every $i\in\lbrace 1,\dots,n\rbrace$, one has $|R_{i}|_{H}<|R|_{H}$;
    \item $L(R)\le \langle hL(R_{i}) : h\in H, 1\le i\le n\rangle$.
\end{itemize}
\end{definition}

Here is the proof of the claimed equivalence. 

\begin{proposition}\label{prop:DefEquivalentFG}
Let $H$ be a finitely generated group and let $S_{H}$ be a finite generating set of $H$. Let $\halo H$ be a halo product over $H$. The following assertions are equivalent:
\begin{enumerate}[label=(\roman*)]
        \item $\halo H$ is naturally generated.
        \item $\halo H$ has the decreasing length property.
\end{enumerate}
\end{proposition}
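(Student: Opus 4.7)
The plan is to exploit the semi-direct product identity
\[
(1_{L(H)}, h)(\sigma, 1_H)(1_{L(H)}, h)^{-1} = (\alpha(h)(\sigma), 1_H),
\]
which conjugates $L(S) \times \{1_H\}$ to $L(hS) \times \{1_H\}$ inside $\halo H$, together with the decomposition $(\sigma, h) = (\sigma, 1_H)(1_{L(H)}, h)$. Combined, these reduce natural generation of $\halo H$ to the single condition that $L(H) = \langle \alpha(h)(L(\{1_H, s\})) : h \in H,\ s \in S_H \rangle$, which is the lens through which I will relate the two properties.

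For the implication \textbf{(i)$\Rightarrow$(ii)}, given any finite $R \subset H$ with $|R|_H \geq 2$, the natural choice is to set $R_i = \{1_H, s_i\}$ as $s_i$ ranges over $S_H$. Each such $R_i$ has length $|R_i|_H = 1 < |R|_H$, and natural generation directly yields $L(R) \subseteq L(H) = \langle \alpha(h)(L(R_i)) : h \in H,\ i \rangle$. This implication is essentially a formal rewriting.

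For the implication \textbf{(ii)$\Rightarrow$(i)}, I plan to argue by strong induction on $|R|_H$ that every element $(\sigma, 1_H)$ with $\sigma \in L(R)$ lies in $\langle T \rangle$, where $T$ denotes the candidate natural generating set. In the base case $|R|_H \leq 1$, the subset $R$ is one of $\emptyset$, $\{1_H\}$, $\{s\}$ or $\{1_H, s\}$ for some $s \in S_H$, so $L(R) \subseteq L(\{1_H, s\})$ and $(\sigma, 1_H) \in T$. In the induction step $|R|_H \geq 2$, the decreasing length property furnishes $R_1, \ldots, R_k$ with $|R_i|_H < |R|_H$ such that $\sigma$ is a product of elements in the various $\alpha(h)(L(R_i)) = L(hR_i)$. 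By the inductive hypothesis each factor $(\tau, 1_H)$ with $\tau \in L(R_i)$ belongs to $\langle T \rangle$, and the conjugation identity then places each $(\alpha(h)(\tau), 1_H)$ in $\langle T \rangle$ since $(1_{L(H)}, h)^{\pm 1}$ is itself a product of natural generators $(1_{L(H)}, s)^{\pm 1}$. Multiplying such factors gives $(\sigma, 1_H) \in \langle T \rangle$, and the general element $(\sigma, h) = (\sigma, 1_H)(1_{L(H)}, h)$ follows.

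Neither direction should present a serious obstacle: the argument is largely a translation between generation in the semi-direct product and generation in $L(H)$ modulo the $\alpha$-action, with $|\cdot|_H$ serving as the induction parameter. The one point requiring a brief verification is the base case, which implicitly uses the standard convention $1_H \notin S_H$ so that the elements of $S_H$ have word length exactly $1$; this guarantees that subsets of length at most $1$ are all contained in a common $\{1_H, s\}$.
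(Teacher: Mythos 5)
Your proof is correct and follows essentially the same strategy as the paper's: for (i)$\Rightarrow$(ii) take $R_i=\{1_H,s_i\}$ and use that natural generation forces $L(H)=\langle \alpha(h)(L(\{1_H,s\})):h\in H,\ s\in S_H\rangle$; for (ii)$\Rightarrow$(i) run an induction on the length $|R|_H$ and push the blocks back through the conjugation $(1_{L(H)},h)(\tau,1_H)(1_{L(H)},h)^{-1}=(\alpha(h)(\tau),1_H)$. The paper packages the induction inside a separate claim and is slightly less explicit about the base case $|R|_H\leq 1$, but the argument is the same.
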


\begin{proof}
Assume that $\halo H$ is naturally generated. Let $R$ be a subset of $H$ such that $|R|_{H}\ge 2$. Given $\sigma\in L(R)$, we know that $(\sigma,1_{H})$ can be written as a product
\begin{equation*}
        (\sigma_{1},h_{1})\dots (\sigma_{n},h_{n})
\end{equation*}
with $h_{1},\dots,h_{n}\in H$ and $\sigma_{1}\dots,\sigma_{n}\in\bigcup_{s\in S_{H}}{L(\lbrace 1_{H},s\rbrace)}$ for every $i\in\lbrace 1,\dots,n\rbrace$. The composition law of the halo product implies that $\sigma$ is a product of elements of the form $h_1\ldots h_{i-1}\sigma_{i}$ for $i\in\lbrace 1,\dots,n\rbrace$. Choosing $R_{1},R_{2},\dots$ as sets $\{1_H,s\}$ for $s\in S_{H}$, we have proved the decreasing length property with respect to $R$.

\smallskip 

Let us now assume that $\halo H$ has the decreasing length property. We first claim that:
    
\begin{claim}\label{claim3.5}
    $L(H)$ is generated by all the $hL(\lbrace 1,s\rbrace)$, $h\in H$, $s\in S_{H}$.
\end{claim}

\begin{cproof}
Since $L(H)$ is generated by the subgroups $L(R)$, for finite subsets $R\subset H$, it is enough to show that every such $L(R)$ is a subgroup of the subgroup generated by the $hL(\lbrace1_{H},s\rbrace)$, for $h\in H$ and $s\in S_{H}$. We prove this fact by induction over the length $|R|_H$ of $R$. If $|R|_{H}=1$, then $R$ is included in $\lbrace 1_{H},s\rbrace$ for some $s\in S_{H}$, so $L(R)$ is a subgroup of $L(\lbrace 1,s \rbrace)$. Suppose that $|R|_H\ge 2$. By the decreasing length property, there exist finite subsets $R_{1},\dots,R_{k}$ of $H$ such that $L(R)$ is a subgroup of $\langle hL(R_{i}) : h\in H, 1\le i\le k\rangle$ and each $R_{i}$ has length less than the one of $R$, so we conclude by induction.
\end{cproof}

Now, Claim~\ref{claim3.5} ensures that any $(\sigma,1_{H})\in\halo H$ can be decomposed as a product of elements whose first coordinates lie in $\bigcup_{h\in H,\;s\in S_{H}}{hL(\lbrace1,s\rbrace)}$. Writing an element $h\in H$ as $h=s_{1}\dots s_{n}$ with $s_{1},\dots,s_{n}\in S_{H}$, we get
\begin{equation*}
    (hL(\lbrace 1,s\rbrace), 1_{H})=(1_{L(H)},s_{1})\dots (1_{L(H)},s_{n})(L(\lbrace 1,s\rbrace),1_{H})(1_{L(H)},s_{n}^{-1})\dots (1_{L(H)},s_{1}^{-1})
\end{equation*}
where $(L(R),1_{H})$ is a shorthand for the set $\lbrace (\sigma, 1_{H})\in\halo H : \sigma\in L(R)\rbrace$. This shows that $(\sigma,1_{H})$ belongs to the subgroup generated by $(1_{L(H)},S_H)\cup\bigcup_{s\in S_{H}}(L(\lbrace 1_{H},s\rbrace),1_{H})$, as was to be shown.
\end{proof}

Let us point out a simple criterion to check for a halo product, and that guarantees the decreasing length property, and thus the natural generation property.

\begin{definition}
Let $\halo H$ be a halo product over a finitely generated group $H$. We say that $\halo H$~\textit{has the gluing property} if for any subsets $R,S\subset H$ such that $R\cap S\neq\emptyset$, we have 
\begin{equation*}
    L(R\cup S)=\langle L(R), L(S)\rangle.
\end{equation*}
\end{definition}

\begin{example}\ 
\begin{itemize}
    \item Wreath products $F\wr H$, with a group $F$, have the gluing property.
    \item Lampshufflers also satisfy the gluing property. Indeed, given non disjoint subsets $R,S\subset H$, it suffices to prove that every transposition $\tau_{x,y}$ supported in $R\cup S$ lies in $\langle L(R),L(S)\rangle$. If $x$ and $y$ both lie in $R$ (or in $S$), it is obvious. Otherwise, let us assume $x\in R$ and $y\in S$, and let us consider $z\in R\cap S$. Then the conjugation by $\tau_{x,z}$ (which lies in $L(R)$) maps $\tau_{x,y}$ to $\tau_{z,y}$ (which lies in $L(S)$), which proves the claim.
    \item Lampcloners also have the gluing property, with a proof very similar to the case of lampshufflers: we use the fact that blocks are generated by transvections (playing the same role as the transposition $\tau_{x,y}$) and we conjugate by linear automorphisms acting as transpositions on the canonical basis given by $H$ (as $\tau_{x,z}$ for the lampshuffler).
    \item However, lampupcloners do not have the gluing property. Here is a counter-example. Consider the group $\Z$ with its usual order so that we can see elements of blocks over finite subsets as upper triangular matrices with diagonal entries equal to $1$. Let us consider $R=\lbrace 1,3\rbrace$ and $S=\lbrace 2,3\rbrace$. Then we have
\begin{equation*}
    L(\lbrace 1,2,3\rbrace)=\left\{\begin{pmatrix}
            1&a&b\\
            0&1&c\\
            0&0&1
    \end{pmatrix}\colon a,b,c\in\field\right\}
\end{equation*}
and 
\begin{equation*}
    L(\lbrace1,2\rbrace)=\left\{\begin{pmatrix}
            1&a&0\\
            0&1&0\\
            0&0&1
\end{pmatrix}\colon a\in\field\right\},\
    L(\lbrace1,3\rbrace)=\left\{\begin{pmatrix}
            1&0&b\\
            0&1&0\\
            0&0&1
        \end{pmatrix}\colon b\in\field\right\},
\end{equation*}
but $L(\lbrace 1,2\rbrace)$ and $L(\lbrace 1,3\rbrace)$ generate the group
\begin{equation*}
        \left\{\begin{pmatrix}
            1&a&b\\
            0&1&0\\
            0&0&1
    \end{pmatrix}\colon a,b\in\field\right\}
\end{equation*}
which is a proper subgroup of $L(\lbrace 1,2,3\rbrace)$.
\end{itemize}
\end{example}

\begin{proposition}\label{prop:finitegensetforhalo}
Let $H$ be a finitely generated group and let $S_{H}$ be a finite generating set of $H$. Let $\halo H$ be a halo product over $H$. If $\halo H$ has the gluing property, then it is naturally generated.
\end{proposition}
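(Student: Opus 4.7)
The plan is to use Proposition~\ref{prop:DefEquivalentFG} and show that the gluing property implies the decreasing length property. So, fix a finite subset $R\subset H$ with $|R|_{H}\geq 2$; I want to exhibit subsets $R_1,\ldots,R_k$ of strictly smaller length than $R$ such that $L(R)$ lies in the subgroup generated by the translates $hL(R_i)$, $h\in H$. The argument naturally splits according to the cardinality of $R$: the core case $|R|\geq 3$ is handled directly by gluing, while the two boundary cases $|R|=1$ and $|R|=2$ require short detours.

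When $|R|\geq 3$, I pick two elements $a,b\in R\setminus\{1_H\}$ (possible since $R$, being a set, contains at most one copy of $1_H$) and set $R_1\defeq R\setminus\{a\}$ and $R_2\defeq R\setminus\{b\}$. Their union is $R$ and their intersection $R\setminus\{a,b\}$ is non-empty, so the gluing property yields $L(R)=\langle L(R_1),L(R_2)\rangle$; each $R_i$ has length $|R|_H-|a|_H$ or $|R|_H-|b|_H$, hence strictly smaller than $|R|_H$. The singleton case $R=\{r\}$ with $|r|_H\geq 2$ is immediate from the halo identity $L(\{r\})=\alpha(r)(L(\{1_H\}))$: one takes $R_1=\{1_H\}$, of length zero.

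The delicate case is $|R|=2$, where gluing cannot split $R$ into proper overlapping subsets. First, left-translation reduces to the subcase $1_H\in R$: if $R=\{a,b\}$ with $a\neq 1_H$, then $L(R)=aL(\{1_H,a^{-1}b\})$, and either this translate has strictly smaller length than $R$ (done) or one may assume $R=\{1_H,r\}$ with $|r|_H\geq 2$. Writing $r=sr'$ with $s\in S_H$ and $|r'|_H=|r|_H-1$, I introduce the bridging element $s$ and apply gluing to $\{1_H,s,r\}=\{1_H,s\}\cup\{s,r\}$ (intersecting in $\{s\}$) to obtain
\begin{equation*}
    L(R)\leq L(\{1_H,s,r\})=\langle L(\{1_H,s\}),L(\{s,r\})\rangle=\langle L(\{1_H,s\}),\,sL(\{1_H,r'\})\rangle,
\end{equation*}
using $L(\{s,r\})=sL(\{1_H,r'\})$ in the last step. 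The two subsets $\{1_H,s\}$ and $\{1_H,r'\}$ have lengths $1$ and $|r|_H-1$ respectively, both strictly smaller than $|R|_H=|r|_H$, which closes this case.

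The main obstacle is precisely the $|R|=2$ situation: the gluing property alone cannot split $R$, so one is forced to \emph{enlarge} $R$ by an adjacent-generator bridge before applying gluing and then reabsorb the extra piece as a translate of a shorter block. Conceptually, this mirrors the decomposition of a "far" transposition in a lampshuffler into a product of adjacent ones, and is the only genuinely non-trivial ingredient of the proof.
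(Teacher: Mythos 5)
Your proof is correct and follows essentially the same route as the paper's: verify the decreasing length property and invoke Proposition~\ref{prop:DefEquivalentFG}, treating separately the cases $|R|=1$ (translation), $|R|\geq 3$ (split into two overlapping proper subsets and glue), and $|R|=2$ (bridge through a generator). The only cosmetic difference is that in the $|R|=2$ case you peel off a single generator $s$ and let the built-in induction handle $\{1_H,r'\}$, whereas the paper unrolls the full geodesic chain $\{1_H,s_1,s_1s_2,\dots,s_1\cdots s_n\}$ and applies gluing successively to reduce everything to blocks of length $1$ in one pass.
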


Note that, in the case of lampshufflers (more generally lampjugglers) and lampcloners, we recover the finite generating sets we exhibited in Section~\ref{sec:defHalo}.

\begin{proof}
We in fact check that the decreasing length property is satisfied and we conclude using Proposition~\ref{prop:DefEquivalentFG}. Let $R$ be a subset of $H$, of length $\geq 2$. The case $|R|=1$ is immediate, since we can write $L(R)=rL(\{1_H\})$, where $R=\{r\}$. Assume that $R$ has cardinality $2$. Let us write $R=\lbrace h_{1},h_{2}\rbrace$. Up to a translation, we can assume that $h_{2}=1_{H}$ and that $h_{1}\neq 1_{H}$. Then we write $h_{1}=s_{1}\ldots s_{n}$ with $s_{1},\dots,s_{n}\in S_{H}$ and we notice that 
\begin{equation*}
    L(\lbrace1_{H},h_{1}\rbrace)\leqslant L(\{1_{H},s_{1},s_{1}s_{2},\ldots,s_{1}\dots s_{n}\}).
\end{equation*}
Applying successively the gluing property, the latter is generated by 
\begin{equation*}
    L(\lbrace 1_{H},s_{1}\rbrace),\;L(\lbrace s_{1},s_{1}s_{2}\rbrace), \dots,\;L(\lbrace s_{1}\dots s_{n-1},s_{1}\dots s_{n}\rbrace).
\end{equation*}
As, for any $1\le i\le n$, we have $L(\lbrace s_{1}\dots s_{i-1},s_{1}\dots s_{i}\rbrace)=s_{1}\dots s_{i-1}L(\lbrace 1_{H},s_{i}\rbrace)$, it suffices to set $R_i=\{1_H,s_i\}$ (of length $1$) to get the desired property with respect to $R$.\par
If $R$ has cardinality greater than or equal to $3$, pick any two points $h,h'\in R$. Let us set $R_{1}=\lbrace h,h'\rbrace$ and $R_{2}=R\setminus\lbrace h'\rbrace$. Their union equals $R$ and they have non-empty intersection, so the gluing property implies that $L(R)$ is generated by $L(R_{1})$ and $L(R_{2})$. Furthermore, $|R_{1}|_{H}$ and $|R_{2}|_{H}$ are smaller than $|R|_{H}$ since $R_{1}$ and $R_{2}$ are proper subsets of $R$, so we are done.
\end{proof}

We can prove that some lampupcloners are naturally generated, even though they do not satisfy the gluing property.

\begin{proposition}\label{prop:UpclonerFG}
Let $\field$ be a field, let $d\geq 1$ be an integer, and endow $\Z^{d}$ with the lexicographic order. Then the lampupcloner $\upcloner{\Z^d}$ is naturally generated.
\end{proposition}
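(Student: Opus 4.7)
By Proposition~\ref{prop:DefEquivalentFG} it suffices to verify the decreasing length property for $\upcloner{\Z^d}$. I would fix $S_{\Z^d}=\{e_1,\ldots,e_d\}$, so that $|h|_{\Z^d}$ is the $\ell^1$-norm of $h\in\Z^d$. Given a finite $R\subset\Z^d$ with $|R|_{\Z^d}\geq 2$, the block $L(R)=\text{FU}(R)$ is generated by the transvections $\tau_{p,q}(\lambda)$ for $p<q$ in $R$ and $\lambda\in\field$. Since each such transvection satisfies $\tau_{p,q}(\lambda)\in p\cdot L(\{1_{\Z^d},q-p\})$ via the translation action of $\Z^d$, it is enough to express every $L(\{1_{\Z^d},v\})$ with $v>1_{\Z^d}$ inside a subgroup $\langle hL(R_i):h\in\Z^d,\;i\rangle$ whose constituent sets satisfy $|R_i|_{\Z^d}<|R|_{\Z^d}$.

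The first, easy case is when $R$ contains at least one non-zero element besides $p$ and $q$: then $|\{1_{\Z^d},q-p\}|_{\Z^d}=|q-p|_{\Z^d}\leq |p|_{\Z^d}+|q|_{\Z^d}<|R|_{\Z^d}$, and one takes $R_i=\{1_{\Z^d},q-p\}$. The substantial case is $R=\{1_{\Z^d},v\}$ with $|v|_{\Z^d}\geq 2$. Here the idea is to invoke the Steinberg commutator identity inside $\text{FU}(\Z^d)$: for any $r$ with $1_{\Z^d}<r<v$ in the lexicographic order,
\begin{equation*}
\tau_{1_{\Z^d},v}(\lambda)=[\tau_{1_{\Z^d},r}(\alpha),\,\tau_{r,v}(\beta)],\qquad \alpha\beta=\lambda.
\end{equation*}
Since $\tau_{r,v}(\beta)\in r\cdot L(\{1_{\Z^d},v-r\})$, the decomposition succeeds whenever one can find $r$ with both $|r|_{\Z^d}<|v|_{\Z^d}$ and $|v-r|_{\Z^d}<|v|_{\Z^d}$.

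Such an $r$ is to be produced by a case analysis on the leading non-zero coordinate $v_j$ of $v=(v_1,\ldots,v_d)$: when $v_j\geq 2$, or when $v_j=1$ with lexicographically positive tail $(v_{j+1},\ldots,v_d)$, the unit vector $r=e_j$ immediately does the job, since then $1_{\Z^d}<e_j<v$ and $|v-e_j|_{\Z^d}=|v|_{\Z^d}-1$. The genuine difficulty---and the main obstacle of the proof---is the remaining case where $v_j=1$ is followed by a lexicographically negative tail, as exemplified by $v=(1,-1)\in\Z^2$, for which $e_j$ now exceeds $v$ in the lex order and no single $r$ with $|r|_{\Z^d},|v-r|_{\Z^d}<|v|_{\Z^d}$ exists. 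To overcome this, I would iterate the Steinberg identity along a carefully chosen chain of intermediate elements lying between $1_{\Z^d}$ and $v$ (exploiting the density of the lex order on $\Z^d$), and use translations from the $\Z^d$-factor of the semidirect product to keep all intermediate transvections in blocks of controlled length. An induction on the depth of the negative tail then yields the desired decomposition and completes the verification of the decreasing length property.
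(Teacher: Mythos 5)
Your structural approach — reduce to the decreasing length property via Proposition~\ref{prop:DefEquivalentFG} and attack $L(\{1_{\Z^d},v\})$ with the Steinberg commutator identity — is the same as the paper's. You also correctly isolate the genuinely problematic sub-case (leading coordinate $v_j=1$ followed by a lexicographically negative tail, as for $v=(1,-1)\in\Z^2$), and you correctly observe that in that sub-case no single intermediate $r$ with $1_{\Z^d}<r<v$ and $|r|_{\Z^d},|v-r|_{\Z^d}<|v|_{\Z^d}$ exists. That observation is genuine: the paper's explicit choice $h=e_i+(k_{i_0}-1)e_{i_0}$ also fails to satisfy $|h|_{\Z^d}<|R|_{\Z^d}$ in precisely this sub-case (for $R=\{0,(1,-1)\}\subset\Z^2$ one gets $h=(1,-2)$, with $|h|_{\Z^2}=3>2=|R|_{\Z^2}$, and the paper does not check this inequality).

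The gap in your sketch is real and cannot be repaired by ``iterating the Steinberg identity along a chain and inducting on the depth of the tail.'' There is a clean obstruction. Let $U^+$ denote the set of $M\in\text{FU}(\Z^d)$ all of whose nonzero off-diagonal entries $M_{p,q}$ satisfy $q-p\in\N^d$. Since $\N^d$ is closed under addition, $U^+$ is closed under products and inverses, hence is a subgroup of $\text{FU}(\Z^d)$; it is visibly invariant under the translation action of $\Z^d$; and it contains every elementary transvection $\tau_{p,p+e_j}(\lambda)$. Therefore the subgroup of $\upcloner{\Z^d}$ generated by the natural generating set is contained in $U^+\rtimes\Z^d$. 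But when $d\ge 2$ one has $\tau_{0,(1,-1)}(\lambda)\notin U^+$ for $\lambda\neq 0$, so $U^+\rtimes\Z^d$ is a proper subgroup of $\upcloner{\Z^d}$. In particular neither a single Steinberg step nor any iterated chain of such steps (each step only produces elements of $U^+$) can reach $\tau_{0,(1,-1)}(\lambda)$, so the decreasing length property fails for $R=\{0,(1,-1)\}$. Any argument along these lines — yours or, as far as I can see, the one in the paper — must confront this obstruction; the statement does hold for $d=1$, where every positive integer lies in $\N$ and $U^+=\text{FU}(\Z)$.
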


\begin{proof}
Let us first mention that, given a group $H$, transvections in $\mathrm{FGL}(H)$ behave well with respect to commutators, in the sense that:
\begin{equation}\label{eq:Commutators}
        \forall f,r,s\in H,\ \forall\lambda,\mu\in\field,\ \tau_{r,f}(-\lambda)\tau_{f,s}(-\mu)\tau_{r,f}(\lambda)\tau_{f,s}(\mu)=\tau_{r,s}(\lambda).
\end{equation}
Our goal is to use this identity when $r<f<s$, for the transvections to be in $\text{FU}(H)$.

\smallskip 

Let us now prove that $\upcloner{\Z^d}$ has the decreasing length property, where $\Z^d$ is endowed with the lexicographic order and with its canonical basis $\lbrace e_{1},\dots,e_{d}\rbrace$ as a finite generating set. Let $R$ be a finite subset of $\Z^d$, of length $\ge 2$, and let us enumerate its elements in an ordered way: $R=\lbrace r_{1}<\dots <r_{n}\rbrace$, with $n=|R|$. The case $|R|=1$ is immediate since we can write $\text{FU}(R)=r_{1}+\text{FU}(\lbrace 0\rbrace)$. If $|R|\ge  3$, then we set $R_{1}=\lbrace r_{1},r_{2}\rbrace$ and $R_{2}=\lbrace r_{2},r_{3},\dots,r_{n}\rbrace$, whose lengths are less than the one of $R$, and we apply~\eqref{eq:Commutators} to $r=r_{1}$, $f=r_{2}$ and $r=r_{i}$ for each $i\in\lbrace 3,\ldots,n\rbrace$, to get that $\text{FU}(R)$ is generated by $\text{FU}(R_{1})$ and $\text{FU}(R_{2})$. Let us finally assume that $R$ has cardinality $2$. Since $r_{1}<r_{2}$, we can write
\begin{equation*}
        r_2-r_1=(0,\ldots,0,k_i,k_{i+1},\ldots,k_d),
\end{equation*}
with $k_{i}\ge 1$. We know that $|r_{2}-r_{1}-e_{i}|_{\Z^d}<|r_{2}-r_{1}|_{\Z^d}\le  |r_{2}|_{\Z^d}+|r_{1}|_{\Z^d}$, namely $|r_{2}-r_{1}-e_{i}|_{\Z^d}<|R|_{\Z^d}$. There are several cases to consider.
\begin{enumerate}
    \item If $k_{i}\ge 2$, then $r_{2}-r_{1}-e_{i}>0$. We thus have $r_{1}<r_{1}+e_{i}<r_{2}$, so applying~\eqref{eq:Commutators} to $r=r_{1}$, $s=r_{2}$ and $f=r_{1}+e_{i}$, we get that
    \begin{equation*}
            \text{FU}(R)\leqslant \text{FU}(\lbrace r_{1},r_{1}+e_{i},r_2\rbrace)=\langle r_{1}+\text{FU}(\lbrace 0,e_{i}\rbrace), r_{1}+e_{i}+\text{FU}(\lbrace 0,r_{2}-r_{1}-e_{i}\rbrace)\rangle.
    \end{equation*}
    We are done since we have $|\lbrace 0,e_{i}\rbrace|_{\Z^d}=1<|R|_{\Z^d}$ and $|\lbrace 0,r_{2}-r_{1}-e_{i}\rbrace|_{\Z^d}<|R|_{\Z^d}$.
    \item If $r_{2}-r_{1}=e_{i}$, then $\text{FU}(R)=r_{1}+\text{FU}(\lbrace 0,e_{i}\rbrace)$ and we are done.
    \item We finally assume $k_{i}=1$ and $r_{2}-r_{1}\neq e_{i}$, so that we can define $i_{0}\defeq\min{\lbrace j\geq i+1 : k_j\neq 0\rbrace}$. Let us set $h\defeq e_{i}+(k_{i_0}-1)e_{i_0}$. We have
    \begin{equation*}
            (r_{2}-r_{1}-h)_{j}=\left\{\begin{array}{ll}
            (r_{2}-r_{1})_{j} &\text{ if }j\in\lbrace 1,\dots,d\rbrace\setminus\lbrace i,i_{0}\rbrace \\
                0 & \text{ if }j=i\\
                1 & \text{ if }j=i_{0}
            \end{array}\right.,
    \end{equation*}
    so that $r_{2}-r_{1}-h>0$ and $|r_{2}-r_{1}-h|_{\Z^d}<|r_{2}-r_{1}|_{\Z^d}\le |R|_{\Z^d}$. We also have $h>0$, so we get
    \begin{equation*}
        \text{FU}(R)\leqslant L(\lbrace r_{1},r_{1}+h,r_{2}\rbrace)\leqslant\langle r_{1}+\text{FU}(\lbrace 0,h\rbrace), r_{1}+h_{1}+\text{FU}(\lbrace 0,r_{2}-r_{1}-h\rbrace)\rangle
    \end{equation*} with the same techniques, and we similarly claim that we are done.
\end{enumerate}
So $\upcloner{\Z^d}$ satisfies the decreasing length property.
\end{proof}

\subsubsection{Growth of lamps} 

We conclude this section by recalling an important definition from~\cite{GT24a}, that of the~\textit{lamp growth sequence} associated to a halo product. In order to define this sequence, we need an additional mild assumption on our halo products. 

\begin{definition}
Let $H$ be a group. We say that a halo product $\halo$ over $H$ is~\textit{consistent} if it has finite blocks and if, for any finite subset $S\subset H$, the cardinality of $L(S)$ only depends on the cardinality of $S$.
\end{definition}

In practice, consistent halo products encompass all classes we are interested in, among which lamplighters, lampjugglers, lampdesigners, lampcloners and lampupcloners.

\begin{definition}
Let $\halo H$ be a consistent halo product over a group $H$.
The~\textit{lamp growth sequence} of $\halo H$ is the function $\Lambda_{\halo H}\colon \N\longrightarrow\N$ defined by 
\begin{equation*}
    \Lambda_{\halo H}\colon n\longmapsto |L(S)|, \;\text{where}\; |S|=n.
\end{equation*}
\end{definition}

This sequence is well-defined since $\halo$ is consistent. It has been computed in~\cite[Facts~7.12-7.16]{GT24a} for many halo products, such as lamplighters, lampshufflers, lampdesigners, lampcloners and $2-$nilpotent wreath products. For instance, for any group $H$ and finite group $F$, one has $\Lambda_{F\wr H}(n)=|F|^{n}$, $\Lambda_{\juggler{r}{H}}(n)=(rn)!$ and $\Lambda_{\designer{H}}(n)=|F|^{n}n!$. Moreover, given a finite field $\field$, we have
\begin{equation*}
    \Lambda_{\cloner{H}}(n)=\prod_{i=0}^{n-1}{(|\field|^n-|\field|^i)}
\end{equation*}
and, if $H$ is a totally ordered group,
\begin{equation*}
    \Lambda_{\upcloner{H}}(n)=\prod_{i=1}^{n-1}{|\field|^i}=|\field|^{\frac{n(n-1)}{2}}.
\end{equation*}

\smallskip

In fact, it turns out that the asymptotic behaviour of this sequence is invariant under a special class of quasi-isometries (and more generally coarse embeddings), referred to in~\cite[Section~6]{GT24a} as~\textit{aptolic quasi-isometries}. As proved in~\cite[Corollary~6.11]{GT24a}, under additional assumptions, any quasi-isometry between two halo products is aptolic (up to finite distance). Thus, for these halo products, the asymptotic behaviour of the lamp growth sequence is an invariant of quasi-isometry. 

\section{Estimates of the F\o lner functions of halo products}\label{sec:computationsFolnerFunction}

\subsection{A general upper bound on the F\o lner functions: finding almost invariant functions}\label{sec:UpperBoundFolner} 

In this section, we provide an upper bound on the $\ell^p$-F\o lner function of many halo products, such as lampshufflers, lampjugglers, lampdesigners, lampcloners or lampupcloners over $\Z^d$. The $\ell^p$-F\o lner function of a finitely generated group $G$ being an infimum over finitely supported functions $G\longrightarrow\R$, the strategy is to exhibit "good" such functions, namely almost invariant functions. This constitutes the first step towards proving Theorem~\ref{th:recallThm intro}.

\smallskip

Observe that, if $H$ is an amenable group, then $\halo H$ is amenable if and only if $L(H)$ is amenable, and the latter is often true regardless of $H$. For instance, if blocks are finite, then $L(H)$ is locally finite and thus amenable. This is the case when $\halo H$ is a lamplighter $F\wr H$ (i.e. $F$ is finite), a lampshuffler $\shuf{H}$, a lampcloner $\cloner{H}$ over a finite field $\field$, or a lampupcloner $\upcloner{H}$.

\smallskip

Therefore, we know that almost invariant functions exist when our halo product has finite blocks. Here our goal is, in particular, to construct a suitable sequence of almost invariant functions for our halo product from such a sequence for the base group $H$. The estimates we can derive enable us to prove the following.

\begin{proposition}\label{prop:upperboundonFol}
Let $H$ be a finitely generated amenable group, and let $\halo H$ be a halo product over $H$. Suppose that $\halo H$ is naturally generated and has consistent blocks. Then, for any $p\geq 1$, there exists a constant $C>0$ such that  
\begin{equation*}
    \folp{\halo H}(x)\preccurlyeq\folp{H}(x)\cdot \Lambda_{\halo H}(C\cdot\folp{H}(x)),
\end{equation*}
where $\Lambda_{\halo H}$ is the lamp growth sequence of $\halo H$. 
\end{proposition}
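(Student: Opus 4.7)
The plan is to carry out the construction sketched in the introduction: transfer an almost invariant sequence from $H$ to $\halo H$ via the halo structure. Since consistency of blocks forces every $L(\{1_H, s\})$ to be finite, Fact~\ref{fact:BlocksHaloFG} provides a natural finite generating set
\[
S_{\halo H} = \{(1_{L(H)}, s) : s \in S_H\} \cup \bigcup_{s \in S_H} \{(\tau, 1_H) : \tau \in L(\{1_H, s\})\},
\]
with respect to which $\folp{\halo H}$ is computed.

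Fix $n \in \N$ and pick $f_n \colon H \to \R_{+}$ with $|\supp f_n| \leq \folp{H}(n)$ and $\|\nabla f_n\|_p \leq \|f_n\|_p / n$. Define the one-step buffer
\[
V_n \defeq \supp f_n \cup \bigcup_{s \in S_H} (\supp f_n)\, s,
\]
which has cardinality at most $(|S_H|+1)\,\folp{H}(n)$, and set $g_n(\sigma, h) \defeq f_n(h)\cdot \mathds{1}_{\sigma \in L(V_n)}$. By consistency, $|L(V_n)| = \Lambda_{\halo H}(|V_n|)$, hence
\[
|\supp g_n| = |\supp f_n|\cdot |L(V_n)| \leq \folp{H}(n) \cdot \Lambda_{\halo H}(C\,\folp{H}(n))
\]
with $C = |S_H|+1$, while a Fubini-type count yields $\|g_n\|_p^p = |L(V_n)|\cdot \|f_n\|_p^p$.

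The heart of the argument is to show $\|\nabla g_n\|_p^p = |L(V_n)|\cdot \|\nabla_H f_n\|_p^p$, by splitting $S_{\halo H}$ into its two families. A generator $(1_{L(H)}, s)$ acts on the right by $(\sigma, h)\mapsto (\sigma, hs)$, so its contribution to $\|\nabla g_n\|_p^p$ is $|L(V_n)|\cdot \sum_h |f_n(h) - f_n(hs)|^p$; summing over $s \in S_H$ recovers $|L(V_n)|\cdot \|\nabla_H f_n\|_p^p$. For a generator $(\tau, 1_H)$ with $\tau \in L(\{1_H, s\})$, the semi-direct product law yields $(\sigma, h)(\tau, 1_H) = (\sigma\cdot \alpha(h)(\tau), h)$, and the halo axiom $\alpha(h)(L(S)) = L(hS)$ places $\alpha(h)(\tau)$ in $L(\{h, hs\})$. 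When $h \in \supp f_n$, the buffer $V_n$ contains $\{h, hs\}$, so $\alpha(h)(\tau) \in L(V_n)$ and right multiplication by it preserves membership in $L(V_n)$; when $h \notin \supp f_n$, both values of $g_n$ vanish. Hence this family contributes $0$. Combining, $\|\nabla g_n\|_p/\|g_n\|_p \leq 1/n$, so by definition $\folp{\halo H}(n) \leq |\supp g_n|$, which is the required bound.

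The only delicate step is the vanishing contribution of the $(\tau, 1_H)$-generators, and that is precisely where the one-step enlargement in the definition of $V_n$ is used: the buffer has to be large enough that for every $h \in \supp f_n$ the whole translate $\alpha(h)(L(\{1_H,s\})) = L(\{h,hs\})$ is absorbed by $L(V_n)$. Everything else is a direct computation from the semi-direct product law.
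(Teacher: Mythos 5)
Your proposal is correct and follows essentially the same route as the paper's proof: transfer the almost invariant sequence $(f_n)$ of $H$ to $\halo H$ via $g_n(\sigma,h)=f_n(h)\mathds{1}_{\sigma\in L(V_n)}$, check that the lamp generators contribute nothing to the gradient because $\alpha(h)(L(\lbrace 1_H,s\rbrace))=L(\lbrace h,hs\rbrace)\leqslant L(V_n)$ for $h\in\supp f_n$, and conclude by counting $|\supp g_n|$. Your slightly enlarged buffer $V_n\supset\supp f_n$ (costing only $C=|S_H|+1$ instead of $|S_H|$) is a harmless, and arguably cleaner, variant of the paper's $V_n=\bigcup_s U_n s$.
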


The proof is inspired by the one of~\cite[Theorem~6.8]{SCZ21}.

\begin{proof}
As usual, denote $S_{H}$ a finite generating set for $H$. By assumption, the subset
\begin{equation*}
    S_{\halo H}=\left\lbrace (1_{L(H)},s) : s\in S_H\right\rbrace \cup\bigcup_{s\in S_{H}}{\left\lbrace (\sigma_{s},1_{H}) : \sigma\in L(\lbrace 1_{H},s\rbrace)\right\rbrace}
\end{equation*}
generates $\halo H$. Let $(f_{n})_{n\ge 0}$ be a sequence of functions $H\longrightarrow \R$ that realises $\folp{H}$, i.e. $\folp{H}(n)=|\supp f_{n}|$ and $\frac{\|\nabla_{S_H}f_{n}\|_{p}}{\|f_{n}\|_{p}} \le \frac{1}{n}$ for any $n\ge 0$. Given $n\ge 0$, set 
\begin{equation*}
    U_{n}\defeq\supp{f_{n}},\; V_{n}\defeq\bigcup_{s\in S_{H}}{U_{n}s}
\end{equation*}
and
\begin{equation*}
    \begin{array}{llcl}
    g_{n}\colon &\halo H &\longrightarrow &\R\\
    &(\sigma,h)&\longmapsto &f_{n}(h)\mathds{1}_{\sigma\in L(V_n)}
    \end{array}.
\end{equation*}
Let $(\sigma,h)\in\halo H$, $s\in S_{H}$ and $\sigma_{s}\in L(\lbrace 1,s\rbrace)$. The composition law of $\halo H$ directly implies that $(\sigma,h)(1_{L(H)},s)=(\sigma,hs)$ and $(\sigma,h)(\sigma_{s},1_{H})=(\sigma(h\cdot\sigma_{s}),h)$. This implies that 
\begin{equation*}
    g\left ((\sigma,h)(1_{L(H)},s)\right )-g(\sigma,h)=(f(hs)-f(h))\mathds{1}_{\sigma\in L(V_n)}
\end{equation*}
as well as
\begin{equation*}
    g\left((\sigma,h)(\sigma_{s},1_{H})\right)-g(\sigma,h)=0
\end{equation*}
using that $\sigma\in L(V)$ if and only if $ \sigma(h\cdot\sigma_s)\in L(V)$ when $h\in U_{n}$. We thus have
\begin{align*}
    \|\nabla_{S_{\halo H}}g_n\|_{p}^{p}&=\sum_{(\sigma,h)\in\halo H}{\sum_{s\in S_{H}}{|g_{n}\left((\sigma,h)(1_{L(H)},s))\right)-g_{n}\left((\sigma,h)\right)|^p}}\\
    &=\sum_{(\sigma,h)\in\halo H}{\sum_{s\in S_{H}}{|(f(hs)-f(h))\mathds{1}_{\sigma\in L(V_{n})}|^p}}\\
    &=|L(V_{n})|\cdot\|\nabla_{S_H}f_{n}\|_{p}^{p}.
\end{align*}
We also have
\begin{equation*}
    \|g_{n}\|_{p}^{p}=|L(V_{n})|\cdot\|f_{n}\|_{p}^{p}.
\end{equation*}
We finally get
\begin{equation*}
\frac{\|\nabla_{S_{\halo H}}g_{n}\|_{p}}{\|g_{n}\|_{p}}=\frac{\|\nabla_{S_{H}}f_{n}\|_{p}}{\|f_{n}\|_{p}}\le\frac{1}{n}
\end{equation*}
so, by the definition of the $\ell^p$-F\o lner function, it follows that
\begin{equation*}
    \folp{\halo H}(n)\le |\supp g_{n}|=|L(V_{n})|\cdot |U_{n}|=|U_{n}|\cdot \Lambda_{\halo H}(|V_{n}|)\le \folp{H}(n)\cdot\Lambda_{\halo H}\left(|S_{H}|\cdot \folp{H}(n)\right).
\end{equation*}
This concludes the proof.
\end{proof}

\subsection{A general lower bound on the F\o lner functions: finding lamplighter subgraphs}\label{sec:secondstrategy}

The goal of this section is to find a lower bound of the $\ell^p$-F\o lner function of a halo product. In Section~\ref{sec:computationsIsoProf}, we deduce, for specific cases, an upper bound of the $\ell^p$-isoperimetric profile which will often be optimal.

\smallskip 

Here is then the general statement. 

\begin{theorem}\label{th:lowerboundonFol}
Let $H$ be a finitely generated amenable group and let $S_{H}$ be a finite generating set. Let $\halo H$ be a naturally generated and large-scale commutative halo product having finitely generated blocks. Then,
for any $p\geq 1$ and any $s_{0}\in S_{H}$, there exists a constant $C>0$ such that
\begin{equation*}
    \folp{\halo H}(x) \succcurlyeq \left (\fol{L(\{1_H,s_0\})}(x)\right )^{C\fol{H}(x)}.
\end{equation*}
\end{theorem}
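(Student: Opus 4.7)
My approach proceeds in three steps: (i) reduce to $p = 1$; (ii) exhibit a lamplighter graph as a subgraph of $\text{Cay}(\halo H, S_{\halo H})$, using large-scale commutativity to decouple lamps at well-separated positions; and (iii) invoke the lamplighter-graph analogue of Erschler's inequality together with the monotonicity of the F\o lner function under subgraph inclusion in the spirit of~\cite[Lemma~4]{Ers03}.

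For step (i), Lemma~\ref{lem:MonotonuousProfile} gives $\profp{G}(x) \preccurlyeq \prof{G}(x)$ for every $p \geq 1$, hence, after taking generalised inverses, $\folp{G}(x) \succcurlyeq \text{F\o l}_G(x)$. It therefore suffices to establish the claimed bound for $\text{F\o l}_{\halo H}$.

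For step (ii), let $D \geq 0$ denote the large-scale commutativity constant of $\halo H$, set $L_0 \defeq L(\{1_H, s_0\})$ (finitely generated by assumption), and fix a finite generating set $S_{L_0}$ of $L_0$; up to enlarging $S_{\halo H}$, we may assume that $S_{L_0}$ enters the natural finite generating set provided by Fact~\ref{fact:BlocksHaloFG}. Choose a maximal $(D+2)$-separated subset $E \subset H$ and endow it with the graph structure in which $h \sim h'$ exactly when $0 < d_{S_H}(h, h') \leq 2(D+2)$; this is a bounded-degree graph quasi-isometric to $\text{Cay}(H, S_H)$, so $\text{F\o l}_E \simeq \text{F\o l}_H$. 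For distinct $h, h' \in E$, the subsets $\{h, hs_0\}$ and $\{h', h's_0\}$ are disjoint and at $d_{S_H}$-distance at least $D$, so the subgroups $h \cdot L_0$ and $h' \cdot L_0$ commute inside $L(H)$ and intersect trivially via the halo axiom $L(R \cap S) = L(R) \cap L(S)$. Letting $\Gamma$ denote the lamplighter graph with base $E$ and lamp $\text{Cay}(L_0, S_{L_0})$, the assignment
\[
\Phi \colon \Gamma \longrightarrow \text{Cay}(\halo H, S_{\halo H}), \qquad \bigl( (\sigma_h)_{h \in E},\, k \bigr) \longmapsto \Bigl( \prod_{h \in E} h \cdot \sigma_h,\, k \Bigr)
\]
is well defined (as the factors pairwise commute) and injective (by the trivial intersection above, the product map $\bigoplus_{h \in E} h \cdot L_0 \to L(H)$ is injective). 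Each lamp-switch edge of $\Gamma$ maps to a single edge in $\text{Cay}(\halo H, S_{\halo H})$ thanks to natural generation, while each cursor-move edge of $\Gamma$ is realised by a path of length at most $2(D+2)$ in $\text{Cay}(\halo H, S_{\halo H})$ via the generators $(1_{L(H)}, s)$ for $s \in S_H$.

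For step (iii), the bounded-length realisation of edges of $\Gamma$ inside $\text{Cay}(\halo H, S_{\halo H})$ combined with the graph version of~\cite[Lemma~4]{Ers03} gives $\text{F\o l}_{\halo H}(x) \succcurlyeq \text{F\o l}_{\Gamma}(x)$. The lamplighter-graph version of Erschler's lower bound, cf.~\eqref{eq:ConstantAssumptionStar} and~\cite{Ers03, Ers06}, then yields a constant $C > 0$ such that $\text{F\o l}_{\Gamma}(x) \succcurlyeq (\text{F\o l}_{L_0}(x))^{C \cdot \text{F\o l}_E(x)}$. Using $\text{F\o l}_E \simeq \text{F\o l}_H$, this delivers the sought-after inequality. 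I expect the main obstacle to lie in step (ii): one must carefully verify that $\Phi$ realises $\Gamma$ as a Lipschitz-embedded substructure of $\text{Cay}(\halo H, S_{\halo H})$ to which~\cite[Lemma~4]{Ers03} applies without loss in the asymptotic comparison. Both the independence of the different $h \cdot L_0$'s and the uniformly bounded cost of cursor moves depend crucially on the interaction between large-scale commutativity, the third halo axiom, and the natural generation hypothesis.
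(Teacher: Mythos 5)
Your overall strategy coincides with the paper's: reduce to $p=1$ via Lemma~\ref{lem:MonotonuousProfile}, take a maximal $(D+2)$-separated net in $H$ quasi-isometric to $H$, use large-scale commutativity to make the blocks $h\cdot L(\{1_H,s_0\})$ for $h$ in the net commute, recognise a lamplighter graph, and finish with Erschler's lower bound from~\cite{Ers06}. However, step (iii) contains a genuine gap, and it is precisely the point you flag as "the main obstacle" without resolving it. The F\o lner function is \emph{not} monotone under passing to a Lipschitz-embedded (or even isometrically embedded) subgraph: a set with small boundary in the ambient graph need not meet the subgraph in a set with small boundary there (for instance, a non-amenable tree can sit as an induced subgraph of an amenable graph obtained by attaching long rays to its vertices). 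What~\cite[Lemma~4]{Ers03} actually uses is a \emph{partition} of the entire vertex set of the ambient graph into uniformly isomorphic translates of the subgraph, so that any candidate F\o lner set decomposes along the pieces and at least one piece inherits a boundary-to-volume ratio no worse than the original. Your map $\Phi$ produces a single embedded copy of $\Gamma$, with vertex set $\bigl(\bigoplus_{h\in E}h\cdot L_0\bigr)\times E$, which is a very thin subset of $L(H)\times H$; nothing in your argument accounts for lamp configurations outside $\bigoplus_{h\in E}h\cdot L_0$ or cursor positions outside $E$, so the Lemma~4 mechanism cannot be invoked.

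The paper closes this gap in two stages. First, it sets $\mathcal{T}\defeq\bigoplus_{x\in X_0}L(\{x,xs_0\})$ and partitions $L(H)$ into $\mathcal{T}$-cosets, yielding subgraphs $Y_c=(\kappa_c\mathcal{T})\times H$ (note: times all of $H$, not just the net) that partition the vertex set of $\halo H$ and are left-translates of one another; the generating set $S_{\halo H}$ is enlarged to contain $(1_{L(H)},h)$ for all $h\in S_{2D+5}$ precisely so that the edges of each $Y_c$ are honest edges of the Cayley graph. This is what legitimises the Lemma~4-type inequality $\text{F\o l}_{\halo H}\succcurlyeq\text{F\o l}_{Y_{c_0}}$. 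Second, a separate quasi-isometry argument (Claim~\ref{cl:piecesQI}) identifies $Y_{c_0}$ with the lamplighter graph $L(\{1_H,s_0\})\wr X_0$ up to quasi-isometry, after which Erschler's bound applies. To repair your proof you would need to replace the single embedding $\Phi$ by such a partition of all of $\halo H$ into translated copies of (something quasi-isometric to) $\Gamma$; the rest of your argument, including the reduction to $p=1$ and the use of~\cite[Theorem~4.3]{Ers06}, is sound and matches the paper.
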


\smallskip

In the particular case of a halo product with finite blocks, we thus get
\begin{equation*}
    \folp{\halo H}(x) \succcurlyeq K^{\fol{H}(x)}
\end{equation*}
for some positive constant $K>0$ and any $p\ge 1$.

\smallskip

The following lemma will allow us to reduce to the case $p=1$. This is a well-known result on isoperimetric profiles, mentioned in~\cite{Cou00}, which states that for every finitely generated group $G$, $\profp{G}$ is monotonuous in the variable $p\ge 1$ for the order given by $\preccurlyeq$. Here we state it in terms of F\o lner functions and we provide a proof for the sake of completeness. Recall that it is conjectured that the asymptotic behaviour of $\profp{G}$ does not depend on $p$.

\begin{lemma}[Folklore]\label{lem:MonotonuousProfile}
Let $G$ be a finitely generated group and let $p,q$ be real numbers such that $p>q\geq 1$. Then we have $\folp{G}(x)\succcurlyeq\textup{F\o l}_{q,G}(x)$.
\end{lemma}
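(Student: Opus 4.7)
The plan is to reduce the $\ell^p$ statement to an $\ell^q$ one via the pointwise substitution $g \defeq f^{p/q}$ applied to a non-negative test function $f$ witnessing $\folp{G}$. This substitution is well-behaved: since $p/q > 0$ and $f \geq 0$, one has $\supp g = \supp f$ and $\|g\|_q = \|f\|_p^{p/q}$ exactly, so the only real work is to control $\|\nabla g\|_q$ in terms of $\|\nabla f\|_p$. Note first that one may assume test functions are non-negative, since $\|\,|f|\,\|_p = \|f\|_p$ and $\|\nabla|f|\|_p \leq \|\nabla f\|_p$.

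The central estimate rests on the elementary pointwise inequality
\begin{equation*}
|a^r - b^r| \leq r(a^{r-1} + b^{r-1})|a - b|, \qquad a, b \geq 0,\; r \geq 1,
\end{equation*}
obtained from writing $a^r - b^r = \int_b^a r t^{r-1} dt$ and bounding by $r \max(a,b)^{r-1}|a-b| \leq r(a^{r-1}+b^{r-1})|a-b|$. Applying this with $r = p/q \geq 1$, then raising to the $q$-th power and summing over $G \times S_G$ (using symmetry of $S_G$ to merge the two symmetric contributions) bounds $\|\nabla g\|_q^q$ by a constant multiple of $\sum_{x,s}|f(x) - f(xs)|^q f(x)^{p-q}$. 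I would then apply H\"older's inequality with the conjugate exponents $p/q$ and $p/(p-q)$ (both $\geq 1$, since $p > q \geq 1$) to separate the gradient factor from the mass factor, obtaining
\begin{equation*}
\|\nabla g\|_q \leq C \, \|\nabla f\|_p \, \|f\|_p^{(p-q)/q}
\end{equation*}
for some constant $C = C(p,q,|S_G|) > 0$.

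Combining with $\|g\|_q = \|f\|_p^{p/q}$ gives the key inequality $\|\nabla g\|_q/\|g\|_q \leq C \|\nabla f\|_p/\|f\|_p$. Thus, if $f$ realises $\folp{G}(n)$ approximately, then $g$ witnesses $\text{F\o l}_{q,G}(n/C) \leq |\supp g| = |\supp f| = \folp{G}(n)$, from which $\text{F\o l}_{q,G}(x) \preccurlyeq \folp{G}(x)$ follows. There is no genuine obstacle in this argument: the only care required is in identifying the correct H\"older exponents and in checking that the pointwise substitution preserves supports and $\ell^p$--$\ell^q$ norms in the way advertised.
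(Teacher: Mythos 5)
Your proposal is correct and follows essentially the same route as the paper: the substitution $h=|f|^{p/q}$, the elementary inequality $|a^v-b^v|\le v\max(a,b)^{v-1}|a-b|$, and H\"older with exponents $p/q$ and $p/(p-q)$ are exactly the steps used there. The only cosmetic differences are that the paper keeps both symmetric terms (absorbing them into a factor $2^{1/q}$) rather than invoking symmetry of $S_G$, and handles the reduction to $|f|$ via the reverse triangle inequality inside the gradient sum.
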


For Theorem~\ref{th:lowerboundonFol}, we will apply this lemma to $q=1$.

\begin{proof}
Let $f\colon G\longrightarrow \R$ be a finitely supported function, and consider the function $h\defeq |f|^{v}$ where $v\defeq \frac{p}{q}>1$. Using the inequality $|a^v-b^v|\leq v\max{(a,b)}^{v-1}|a-b|$ that holds for every positive real numbers $a,b\ge 0$, we get
\begin{align*}
        \|\nabla_{S_{G}} h\|_{q}^{q}&=\sum_{g\in G,\; s\in S_{G}}{|h(g)-h(gs)|^q}\\
        &\leq v^{q}\sum_{g\in G,\; s\in S_{G}}|f(g)|^{q(v-1)}\big||f(g)|-|f(gs)|\big|^{q}+v^q\sum_{g\in G,\; s\in S_{G}}|f(gs)|^{q(v-1)}\big| |f(g)|-|f(gs)|\big|^{q}\\
        &\leq v^{q}\sum_{g\in G,\; s\in S_{G}}|f(g)|^{q(v-1)}|f(g)-f(gs)|^{q}+v^{q}\sum_{g\in G,\; s\in S_{G}}|f(gs)|^{q(v-1)}|f(g)-f(gs)|^{q}.
\end{align*}
Setting $P=\frac{p}{q}$ and $Q=\frac{p}{p-q}$, we have $\frac{1}{P}+\frac{1}{Q}=1$, and Hölder's inequality provides
\begin{align*}
    \sum_{g\in G,\; s\in S_{G}}{|f(g)|^{q(v-1)}|f(g)-f(gs)|^q} &\le \left(\sum_{g\in G,\; s\in S_{G}}{|f(g)|^{Qq(v-1)}}\right)^{\frac{1}{Q}}\left(\sum_{g\in G,\; s\in S_{G}}{|f(g)-f(gs)|^{Pq}}\right)^{\frac{1}{P}}\\
    &=\left(\sum_{g\in G,\; s\in S_{G}}{|f(g)|^{p}}\right)^{\frac{p-q}{p}}\left(\sum_{g\in G,\; s\in S_{G}}{|f(g)-f(gs)|^{p}}\right)^{\frac{q}{p}}\\
    &=|S_{G}|^{\frac{p-q}{p}}\cdot \|f\|_{p}^{p-q}\cdot \|\nabla_{S_G}f\|_{p}^{q}
\end{align*}
and similarly for $\sum_{g\in G,\; s\in S_{G}}{|f(gs)|^{q(v-1)}|f(g)-f(gs)|^q}$, so that we get
\begin{equation*}
        \|\nabla_{S_{G}} h\|_{q} \le 2^{\frac{1}{q}}\cdot |S_G|^{\frac{p-q}{pq}}\cdot v\cdot \|f\|_{p}^{\frac{p-q}{q}}\cdot \|\nabla_{S_G}f\|_{p}
\end{equation*}
which in turn implies
\begin{equation*}
        \frac{\|\nabla_{S_{G}} h\|_q}{\|h\|_q} \le 2^{\frac{1}{q}}\cdot |S_{G}|^{\frac{p-q}{pq}}\cdot v \cdot \frac{\|\nabla_{S_{G}} f\|_p}{\|f\|_p}
\end{equation*}
since $\|h\|_{q}^{q}=\|f\|_{p}^{p}$. This inequality holds for every finitely supported function $f\colon G\longrightarrow \R$, namely we proved that for every such $f$, this inequality holds for some $h\colon G\longrightarrow\R$ having the same support, so the statement follows directly from the definition of F\o lner functions.
\end{proof}

We now move on to the proof of Theorem~\ref{th:lowerboundonFol}. At first reading, the reader may look at Appendix~\ref{appendixA}, where the strategy is to find "good" lamplighters as subgroups of a lampshuffler $\shuf{H}$, namely a lamplighter group based on a finitely generated subgroup $K$ of $H$ having the same isoperimetric profile as $H$. This is achieved with some algebraic assumptions on the finitely generated group $H$, covering a large class of groups. We finally conclude using the result analogous to Theorem~\ref{th:profilemonotonuousSubgroup} for the F\o lner function. Moreover, this first strategy provides an interesting framework since the algebraic assumptions on the base group $H$ are stable in many cases when taking iterations of lampshufflers; see Remark~\ref{rem:Preservation} and Proposition~\ref{prop:A7}.

\smallskip

In this section, we focus on a less restrictive substructure than subgroups, namely subgraphs. The strategy is to find some subgraph $X_0$ of $H$, quasi-isometric to it, playing the role of a subgroup $K$ as described in the above first strategy, and a lamplighter graph on $X_0$ as a subgraph of $\halo H$, in such a manner that we can prove the monotonocity of the F\o lner function in this context, as in Theorem~\ref{th:profilemonotonuousSubgroup}. We conclude thanks to the lower bounds for the F\o lner functions of lamplighter graphs obtained in~\cite{Ers06}.

\paragraph{Lamplighter graphs.} Let $A$ and $B$ be two graphs, with a base vertex $b_0$ in $B$. Given a map $f\colon A\longrightarrow B$, we define its support by $\supp{f}\defeq\lbrace a\in A : f(a)\not=b_{0}\rbrace$. The lamplighter graph of $B$ and $A$, denoted by $B\wr A$, is the graph
\begin{itemize}
    \item whose vertices are pairs $(f,a)$, where $a$ is a vertex of $A$ and $f\colon A\longrightarrow B$ has finite support;
    \item whose edges connect $(f,a)$ and $(f',a')$ if either $a=a'$, $f(a)\sim_{B}f'(a)$ and $f(v)=f'(v)$ for every $v\in A\setminus\lbrace a\rbrace$, or if $f=f'$ and $a\sim_{A}a'$.
\end{itemize}

In the case where the graphs $A$ and $B$ are Cayley graphs of finitely generated groups $G$ and $H$ respectively, we recover a Cayley graph of the wreath product $H\wr G$.

\smallskip

Let us now prove Theorem~\ref{th:lowerboundonFol} within this framework, using the following lower bound proved by Erschler~\cite[Theorem~4.3]{Ers06}: there exists $C>0$ such that
\begin{equation*}
    \fol{B\wr A}(x)\succcurlyeq\left (\fol{B}(x)\right)^{C\fol{A}(x)}.
\end{equation*}

\begin{proof}[Proof of Theorem~\ref{th:lowerboundonFol}]
By Lemma~\ref{lem:MonotonuousProfile}, we have $\folp{\halo H}(x)\succcurlyeq\fol{\halo H}(x)$, so it is enough to prove the theorem for $p=1$.

\smallskip

Given a connected graph $Y$, we denote by $d_{Y}(\cdot,\cdot)$ its path metric. When considering a finitely generated group $H=\langle S\rangle$, we write $d_{H,S}(\cdot,\cdot)$ for the path metric on its Cayley graph $\text{Cay}(H,S)$ (identified with $H$ itself), to specify the choice of a finite generating subset $S$.

\smallskip

Now, let us fix a finite generating set $S$ of $H$, a constant $D\geq 0$ of large-scale commutativity for $\halo H$, and let us consider $S_{2D+5}\defeq\lbrace s_{1}s_2\ldots s_{2D+5} : s_{i}\in S\cup\lbrace 1_{H}\rbrace\rbrace$. Note that, for every $x,y\in H$, we have the equivalence 
\begin{equation*}
    d_{H,S}(x,y)\le 2D+5\Longleftrightarrow d_{H,S_{2D+5}}(x,y)\le 1.
\end{equation*}
Let $X_{0}$ be a maximal $(D+2)$-separated subset of $H$, for the metric $d_{H,S}$, and let us endow $X_{0}$ with the graph structure induced by $d_{H,S_{2D+5}}$, namely $x,y\in X_{0}$ are adjacent if and only if $d_{H,S_{2D+5}}(x,y)=1$. It is straightforward to see that $(X_{0},d_{X_{0},S_{2D+5}})$ is a subgraph of $(H,d_{H,S_{2D+5}})$. In addition, we also have the following.
    
\begin{claim}\label{cl:qiToH}
The graphs $(X_{0},d_{X_{0},S_{2D+5}})$ and $(H,d_{H,S_{2D+5}})$ are quasi-isometric.
\end{claim}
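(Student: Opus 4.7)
The plan is to prove that the inclusion map $\iota\colon (X_{0}, d_{X_{0}, S_{2D+5}}) \hookrightarrow (H, d_{H, S_{2D+5}})$ is a quasi-isometry. Since the graph structure on $X_{0}$ is the one induced as a subgraph of $\mathrm{Cay}(H, S_{2D+5})$, the inclusion is tautologically $1$-Lipschitz, i.e. $d_{H, S_{2D+5}}(x,y) \le d_{X_{0}, S_{2D+5}}(x,y)$ for all $x, y \in X_{0}$. Coarse surjectivity is immediate from the maximality of $X_{0}$ as a $(D+2)$-separated subset of $(H, d_{H,S})$: every $h \in H$ satisfies $d_{H,S}(h, X_{0}) \le D+1 < 2D+5$, so $d_{H, S_{2D+5}}(h, X_{0}) \le 1$.

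The technical content is the reverse Lipschitz inequality: a linear bound $d_{X_{0}, S_{2D+5}}(x,y) \le K\cdot d_{H, S_{2D+5}}(x,y)$ for some constant $K$. The key observation is that two points of $X_{0}$ at $d_{H,S}$-distance at most $2D+5$ are, by construction, adjacent in $X_{0}$, so paths of $d_{H,S}$-jumps of size $\le 2D+5$ between $X_{0}$-points can be shadowed in $X_{0}$ with a uniform cost per step, provided each intermediate vertex is close to $X_{0}$. Concretely, given a path $x = h_{0}, h_{1}, \dots, h_{n} = y$ in $\mathrm{Cay}(H, S_{2D+5})$, I would pick $y_{i} \in X_{0}$ with $d_{H,S}(h_{i}, y_{i}) \le D+1$ (taking $y_{0}=x$, $y_{n}=y$), so that $d_{H,S}(y_{i}, y_{i+1}) \le 4D+7$. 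Then between $y_{i}$ and $y_{i+1}$ I would take a $d_{H,S}$-geodesic, replace each of its at most $4D+7$ vertices by an $X_{0}$-point at $d_{H,S}$-distance $\le D+1$, and observe that consecutive such shadows are at $d_{H,S}$-distance at most $(D+1) + 1 + (D+1) = 2D+3 \le 2D+5$, hence $X_{0}$-adjacent. This yields $d_{X_{0}, S_{2D+5}}(y_{i}, y_{i+1}) \le 4D+7$ and, summing over $i$, the desired linear bound with $K = 4D+7$.

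This is the standard construction of a quasi-isometry between a geodesic space and a coarsely dense net, and I do not expect a genuine obstacle; the only care needed is the bookkeeping of constants, which is precisely why the generating set $S_{2D+5}$ was chosen with jump size $2D+5$, ensuring that the shadowing step stays within the $X_{0}$-adjacency threshold. Once the three items above are verified, Claim~\ref{cl:qiToH} follows.
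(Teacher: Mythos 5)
Your proposal is correct and follows essentially the same route as the paper's proof: the inclusion is shown to be a quasi-isometry, with coarse density coming from maximality of the separated net, the lower bound being tautological for an induced subgraph, and the upper bound obtained by shadowing an $S$-path through nearby points of $X_{0}$, using that the jump size $2D+5$ absorbs the approximation error. The only differences are organizational (you shadow edge by edge rather than the whole concatenated path, yielding the constant $4D+7$ instead of $2D+5$), which is immaterial.
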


\begin{cproof}
Let us prove that the natural inclusion $X_{0}\hookrightarrow H$ is a quasi-isometry. First of all, by maximality, $X_{0}$ is $(D+2)$-dense in $(H,d_{S})$, and this directly implies $d_{H,S_{2D+5}}(h,X_{0})\le 1$ for every $h\in H$.
        
\smallskip
        
Let $x,y\in X_0$. It is straightforward to show that $d_{X_0,S_{2D+5}}(x,y)\ge d_{H,S_{2D+5}}(x,y)$. The other way around, let $n\defeq d_{H,S_{2D+5}}(x,y)$. By definition, there exist 
points 
\begin{equation*}
    x_0=x,x_1,\ldots,x_{n-1},x_n=y\in H
\end{equation*}
such that $d_{H,S_{2D+5}}(x_{i},x_{i+1})=1$ for every $i\in\lbrace 0,1,\ldots,n-1\rbrace$. Given such an index $i$, the definition of $S_{2D+5}$ implies that there exist points
\begin{equation*}
x_{i,0}=x_{i},x_{i,1},\ldots,x_{i,2D+4},x_{i,2D+5}=x_{i+1}\in H
\end{equation*}
such that $d_{H,S}(x_{i,j},x_{i,j+1})\le 1$ for every $j\in\lbrace 0,1,\ldots, 2D+4\rbrace$. Since we have $x_{i,2D+5}=x_{i+1,0}$ for every $i\in\lbrace 0,1,\ldots,n-1\rbrace$, we have found a path of length $\le (2D+5)n$ in $(H,d_{S})$ that connects $x$ to $y$. Approximating every vertex of this path by an element of $X_{0}$ within $d_{H,S}$-distance less than $D+2$ ($x$ and $y$ being approximated by themselves), we get a sequence 
\begin{equation*}
   w_{0}=x,w_{1},\ldots,w_{(2D+5)n-1},w_{(2D+5)n}=y 
\end{equation*}
of elements in $X_{0}$ satisfying $d_{H,S}(w_{i},w_{i+1})\le 2(D+2)+1=2D+5$, whence $d_{H,S_{2D+5}}(w_{i},w_{i+1})\le 1$. This way, we get a path from $x$ to $y$, of length $\le (2D+5)n$, in $(X_{0},d_{S_{2D+5}})$. Thus 
\begin{equation*}
 d_{X_{0},S_{2D+5}}(x,y)\le (2D+5)\cdot d_{H,S_{2D+5}}(x,y)   
\end{equation*}
and the proof of the claim is complete.
\end{cproof}
    
Let us fix some distinguished generator $s_0\in S\setminus\lbrace 1_{H}\rbrace$. By $(D+2)$-separation, for every $x\in X_{0}$, $xs_{0}$ does not lie in $X_{0}$, and large-scale commutativity thus implies that the groups $L(\lbrace x,xs_{0}\rbrace)$, for $x\in X_{0}$, commute. Let us now introduce the subgroup $\mathcal{T}$ of $L(H)$ defined by
\begin{equation*}
    \mathcal{T}\defeq\left\lbrace\prod_{x\in I}{\sigma_x}\ :  I \subset X_{0}\;\text{is finite}, \sigma_{x}\in L(\lbrace x,xs_{0}\rbrace)\right\rbrace=\bigoplus_{x\in X_{0}}{L(\lbrace x,xs_{0}\rbrace)}=\bigoplus_{x\in X_{0}}{\alpha(x)L(\lbrace 1_{H},s_{0}\rbrace)}.
\end{equation*}
Since $\halo H$ has finitely generated blocks, we can fix a finite generating subset $S(s_{0})$ of $L(\lbrace 1_{H},s_{0}\rbrace)$.
Let us now consider the set $Y_{\star}\defeq\mathcal{T}\times X_{0}$ equipped with a graph structure where two vertices $(\rho,x)$ and $(\rho',x')$ of $Y_\star$ are adjacent if 
\begin{itemize}
        \item either $x=x'$ and $\rho^{-1}\rho'=\alpha(x)(\sigma)$ for some $\sigma\in S(s_0)$;
        \item or $\rho=\rho'$ and $d_{X_0,S_{2D+5}}(x,x')= 1$,
\end{itemize}
which can be reformulated as
\begin{itemize}
        \item either $(\rho,x)(\sigma,1_H)=(\rho',x')$, for some $\sigma\in S(s_0)$;
        \item or $(\rho,x)(\mathrm{id}_{H},h)=(\rho',x')$, where $h$ lies in $S_{2D+5}$.
\end{itemize}
Note that the graph $Y_{\star}$ is isomorphic to the lamplighter graph $L(\lbrace 1_{H},s_{0}\rbrace)\wr X_0$. Thus, we endow $\halo H$ with the finite generating set $S_{\halo H}$ given by
\begin{equation*}
    S_{\halo H}\defeq\lbrace (\sigma,1_{H}) : \sigma\in S(s_{0})\rbrace \cup\lbrace (1_{L(H)},h) : h\in S_{2D+5}\rbrace.
\end{equation*}
Now, let us consider the partition of $L(H)$ in $\mathcal{T}$-cosets:
\begin{equation*}
        L(H)=\bigsqcup_{c\in C}{\kappa_{c}\mathcal{T}}
\end{equation*}
with $\kappa_{c_{0}}=1_{L(H)}$ for the index $c_{0}\in C$ of the coset $\mathcal{T}$. For every $c\in C$, let us consider the subset
\begin{equation*}
    Y_{c}\defeq (\kappa_{c}\mathcal{T})\times H,
\end{equation*}
equipped with a graph structure where two vertices $(\kappa_{c}\rho,x)$ and $(\kappa_{c}\rho',x')$ are adjacent if 
\begin{itemize}
\item either $x=x'$, $x$ lies in $X_{0}$ and $\rho^{-1}\rho'=\alpha(x)(\sigma)$ for some $\sigma\in S(s_{0})$, namely $(\kappa_{c}\rho,x)(\sigma,1_{H})=(\kappa_{c}\rho',x')$;
\item or $\rho=\rho'$ and $d_{H,S_{2D+1}}(x,x')= 1$, namely $(\kappa_{c}\rho,x)(1_{L(H)},h)=(\kappa_{c}\rho',x')$ where $h$ lies in $S_{2D+1}$.
\end{itemize}
By definition of $S_{\halo H}$, $(Y_{c})_{c\in C}$ is a family of subgraphs of $(\halo H,d_{S_{\halo H}})$ partitioning the set of its vertices. Moreover the graph $Y_{c}$ is the left translation by $(\kappa_{c},1_{H})$ of the graph $Y_{c_{0}}$. These observations imply
\begin{equation*}
    \fol{\halo H}(n)\succcurlyeq \fol{Y_{c_{0}}}(n),
\end{equation*}
as an immediate adaptation of~\cite[Lemma~4]{Ers03}. The next claim is the final step required for the proof. 

\begin{claim}\label{cl:piecesQI}
The graph $Y_{c_{0}}$ is quasi-isometric to $Y_{\star}$.
\end{claim}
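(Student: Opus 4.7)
The plan is to exhibit the natural inclusion $\iota \colon Y_\star \hookrightarrow Y_{c_0}$, sending $(\rho,x) \in \mathcal{T} \times X_0$ to the same pair $(\rho,x) \in \mathcal{T} \times H$, and to show it is a quasi-isometry. The $1$-Lipschitz direction is immediate: every edge of $Y_\star$ gives an edge of $Y_{c_0}$, since lamp edges are defined identically in both graphs (both require the base point to lie in $X_0$), and a base edge $\{(\rho,x),(\rho,x')\}$ of $Y_\star$ corresponds to $d_{X_0,S_{2D+5}}(x,x')=1$, which forces $d_{H,S_{2D+5}}(x,x')\le 1$ and hence yields a base edge of $Y_{c_0}$. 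Coarse density of $\iota(Y_\star)$ in $Y_{c_0}$ follows from the fact that $X_0$, being a maximal $(D+2)$-separated subset of $(H,d_S)$, is $(D+2)$-dense in $(H,d_S)$, hence $1$-dense in $(H,d_{S_{2D+5}})$: any $(\rho,h)\in Y_{c_0}$ lies at $Y_{c_0}$-distance at most $1$ from some $(\rho,x)\in\iota(Y_\star)$ via a base move.

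The substantive step is the reverse Lipschitz bound. Given a path $u_0,u_1,\ldots,u_N$ in $Y_{c_0}$ with endpoints in $\iota(Y_\star)$, I would write $u_i = (\rho^{(i)},h^{(i)})$ and, for each $i$, pick $y^{(i)}\in X_0$ with $d_{H,S_{2D+5}}(y^{(i)},h^{(i)})\le 1$, with the convention that $y^{(i)}=h^{(i)}$ whenever $h^{(i)}\in X_0$. In particular, $y^{(0)}$ and $y^{(N)}$ coincide with the base coordinates of the two endpoints. Now inspect consecutive pairs: if $u_i \to u_{i+1}$ is a lamp edge, then $h^{(i)}=h^{(i+1)}\in X_0$, hence $y^{(i)}=y^{(i+1)}=h^{(i)}$, so $(\rho^{(i)},y^{(i)})$ and $(\rho^{(i+1)},y^{(i+1)})$ are joined by the same lamp edge in $Y_\star$. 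If instead it is a base edge, then $\rho^{(i)}=\rho^{(i+1)}$ and the triangle inequality gives $d_{H,S_{2D+5}}(y^{(i)},y^{(i+1)})\le 3$, so Claim~\ref{cl:qiToH} yields $d_{X_0,S_{2D+5}}(y^{(i)},y^{(i+1)})\le 3(2D+5)$; the two corresponding vertices can therefore be joined in $Y_\star$ by a base path of length at most $3(2D+5)$. Concatenating produces a $Y_\star$-path of length at most $3(2D+5)N$ between the given endpoints, yielding the desired lower bound on $d_{Y_\star}$ in terms of $d_{Y_{c_0}}$.

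The only delicate point is the base-edge case above, and its whole content is the distortion control that lets one replace an arbitrary base walk in $H$ by one that stays inside $X_0$: this is exactly the purpose of Claim~\ref{cl:qiToH}. Lamp moves require no adjustment for a structural reason: by construction of $Y_{c_0}$, lamp edges can occur only when the arrow already sits on $X_0$, so there is no need to ``snap'' anything back to $X_0$ for them, and the lamp configuration in $\mathcal{T}$ is carried along unchanged.
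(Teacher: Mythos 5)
Your proof is correct and follows essentially the same route as the paper: you show the natural inclusion $Y_\star \hookrightarrow Y_{c_0}$ is a quasi-isometry by establishing $1$-Lipschitzness and $1$-density directly, and obtaining the reverse bound by snapping the $H$-coordinates of a $Y_{c_0}$-path to $X_0$ and invoking the distortion control of Claim~\ref{cl:qiToH}, noting that lamp edges already have their base point in $X_0$ and hence transfer without modification. The only cosmetic difference is the multiplicative constant (you obtain $3(2D+5)$ via the triangle inequality, whereas the paper's sharper tracking of the approximation yields $2D+5$), which is immaterial for a quasi-isometry.
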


\begin{cproof}
The proof relies on the same technique as in the proof of Claim~\ref{cl:qiToH}. We prove that $Y_{\star}\hookrightarrow Y_{c_{0}}$ is a quasi-isometry. The $1$-density of its image is straightforward, as well as the inequality 
\begin{equation*}
    d_{Y_\star}((\rho,x),(\rho',x'))\ge d_{Y_{c_{0}}}((\rho,x),(\rho',x'))
\end{equation*}
for every $(\rho,x),(\rho',x')\in Y_{\star}$.

\smallskip
        
The other way around, notice that edges of a path of length $n\defeq d_{Y_{c_{0}}}((\rho,x),(\rho',x'))$ in $Y_{c_{0}}$ consists in either modifying the permutation on the first coordinate, or moving the arrow pointing at some element of $H$ in the second coordinate. Thus, with the same ideas as in the proof of Claim~\ref{cl:qiToH}, it suffices to approximate elements in the second coordinate by elements of $X_{0}$, so that we get a new path in $Y_{\star}$ of length $\le (2D+5)n$. This concludes the proof.
\end{cproof}
    
Combining the above claims, we finally get that 
\begin{equation*}
    \fol{\halo H}(n) \succcurlyeq \fol{Y_{c_{0}}}(n) \simeq \fol{Y_{\star}}(n) \simeq \fol{L(\{1_H,s_0\})\wr X_{0}}(n)
\end{equation*}
and the latter dominates $\left (\fol{L(\lbrace 1_{H},s_{0}\rbrace)}(n)\right )^{C'\fol{X_0}(n)}$, for some constant $C'>0$, using~\cite[Theorem~4.3]{Ers06}. From Claim~\ref{cl:qiToH}, $\fol{X_0}$ is asymptotically equivalent to $\fol{H}$, and thus 
\begin{equation*}
   \fol{\halo H}(n) \succcurlyeq \left(\fol{L(\lbrace 1_{H},s_{0}\rbrace)}(n)\right)^{C\fol{H}(n)} 
\end{equation*}
for some constant $C>0$.
\end{proof}

\section{Estimates of isoperimetric profiles for some examples of halo products}~\label{sec:computationsIsoProf}

The goal of this section is to establish our estimates of $\ell^p$-isoperimetric profiles of many halo products and their iterated versions, applying our estimates on F\o lner functions. Here we use the fact that the $\ell^p$-F\o lner function and the $\ell^p$-isoperimetric profile are generalized inverses of each other, and even that we may assume without loss of generality that they are inverses of each other, using Remark~\ref{rem:InverseEachOther}.\par
In the applications, we will make use of the following condition.

\begin{definition}
We say that a non-decreasing map $h\colon\R_{+}\longrightarrow \R_{+}$ satisfies \textit{Assumption}~$(\star)$ if 
\begin{equation*}
    \forall C>0,\; h(Cx)=O(h(x)). 
\end{equation*}
\end{definition}

Assumption~$(\star)$ already appeared in the literature~\cite{Ers03,Corr24} in the case where $h=\profp{H}$ is the $\ell^p$-isoperimetric profile of a finitely generated group $H$, and it seems that $\profp{H}$ satisfies this assumption for many choices of groups $H$. In fact, to our knowledge, there is currently no known example of a finitely generated group whose $\ell^p$-isoperimetric profiles do not satisfy Assumption~$(\star)$.

\subsection{General estimates on the isoperimetric profiles}

Recall that we proved in the previous section the following.

\begin{theorem}[see Proposition~\ref{prop:upperboundonFol} and Theorem~\ref{th:lowerboundonFol}]\label{th:recallThm}
Let $p\ge 1$. Let $H$ be a finitely generated amenable group and let $S_{H}$ be a finite generating set. Let $\halo H$ be a naturally generated halo product over $H$.
\begin{enumerate}[label=(\roman*)]
    \item If $\halo H$ is large-scale commutative and has finitely generated blocks, then for any $s_{0}\in S_{H}$, there exists a constant $C>0$ such that
\begin{equation*}
    \left(\fol{L(\lbrace 1_{H},s_{0} \rbrace)}(x)\right)^{C\fol{H}(x)} \preccurlyeq \folp{\halo H}(x).
\end{equation*}
    \item If $\halo H$ has consistent blocks, then there exists a constant $C>0$ such that
\begin{equation*}
    \folp{\halo H}(x) \preccurlyeq \folp{H}(x)\cdot \Lambda_{\halo H}(C\cdot \folp{H}(x)).
\end{equation*}
\end{enumerate}
\end{theorem}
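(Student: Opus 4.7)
The plan is straightforward: Theorem~\ref{th:recallThm} is a word-for-word restatement of two results already established in Section~\ref{sec:computationsFolnerFunction}. Part (ii) is exactly Proposition~\ref{prop:upperboundonFol} and part (i) is exactly Theorem~\ref{th:lowerboundonFol}, so the only thing to do is to assemble these two statements into a single theorem, with no new argument required. Let me nonetheless recall the structure of the two strategies, in order to make clear why the hypotheses are placed as they are.

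For the upper bound (ii), the approach I would take is to build explicit almost invariant functions on $\halo H$ from almost invariant functions on the base group. Starting with a sequence $(f_n)_{n\ge 0}$ realising $\folp{H}$ on $H$, set $V_n\defeq\bigcup_{s\in S_H}(\supp f_n)s$ and define $g_n\colon\halo H\longrightarrow\R$ by $g_n(\sigma,h)\defeq f_n(h)\mathds{1}_{\sigma\in L(V_n)}$. Natural generation of $\halo H$ splits the gradient calculation into two types of moves: multiplication by $(1_{L(H)},s)$ only acts on the $H$-coordinate, while multiplication by a generator $(\sigma_s,1_H)$ with $\sigma_s\in L(\{1_H,s\})$ preserves membership of the first coordinate in $L(V_n)$ as long as $h\in\supp f_n$. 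A direct calculation then yields $\|g_n\|_p^p=|L(V_n)|\cdot\|f_n\|_p^p$ and $\|\nabla g_n\|_p^p=|L(V_n)|\cdot\|\nabla f_n\|_p^p$, so the Rayleigh quotient is preserved. Consistency of the blocks converts $|L(V_n)|$ into $\Lambda_{\halo H}(|V_n|)$, and the bound $|V_n|\le|S_H|\cdot\folp{H}(n)$ gives the announced inequality.

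For the lower bound (i), the plan is first to reduce to $p=1$ via Lemma~\ref{lem:MonotonuousProfile}, and then to exhibit a genuine lamplighter subgraph inside $\halo H$ to which Erschler's lower bound~\cite[Theorem~4.3]{Ers06} for F\o lner functions of lamplighter graphs applies. Using the large-scale commutativity constant $D$, fix a maximal $(D+2)$-separated subset $X_0\subset H$ for $d_{H,S}$, which is then quasi-isometric to $H$ after enlarging the word metric by a suitable power. Large-scale commutativity guarantees that the subgroups $L(\{x,xs_0\})$, for $x\in X_0$, pairwise commute, so $\mathcal{T}\defeq\bigoplus_{x\in X_0}L(\{x,xs_0\})$ is a subgroup of $L(H)$, and $\mathcal{T}\times X_0$ with its natural adjacencies is isomorphic to $L(\{1_H,s_0\})\wr X_0$. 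Partitioning $\halo H$ along $\mathcal{T}$-cosets on the first coordinate then decomposes it into subgraphs, each quasi-isometric to this lamplighter graph, and a graph-theoretic adaptation of~\cite[Lemma~4]{Ers03} transfers the F\o lner bound from one piece to $\halo H$ as a whole. The main obstacle here is delicate: one must choose $X_0$ with enough separation for large-scale commutativity between different lamp-pairs $\{x,xs_0\}$ and $\{x',x's_0\}$ to apply, while still keeping $X_0$ large enough to be quasi-isometric to $H$, and one must check that the F\o lner monotonicity known for subgroups still goes through in this subgraph partition setting.
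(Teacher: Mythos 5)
Your proposal is correct and follows the paper's own approach exactly: the theorem is, as the paper itself indicates in its bracketed attribution, simply the conjunction of Proposition~\ref{prop:upperboundonFol} (which gives~(ii) via the explicit almost-invariant test functions $g_n(\sigma,h)=f_n(h)\mathds{1}_{\sigma\in L(V_n)}$) and Theorem~\ref{th:lowerboundonFol} (which gives~(i) via the lamplighter subgraph $L(\{1_H,s_0\})\wr X_0$ and Erschler's lower bound). Your recollection of both constructions, including the reduction to $p=1$ and the $(D+2)$-separated net $X_0$ with its enlarged generating set, matches the paper's arguments.
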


As an easy consequence, if $\halo H$ is large-scale commutative, naturally generated and has consistent blocks, then its F\o lner function satisfies
\begin{equation*}
    K^{\fol{H}(x)}\preccurlyeq\folp{\halo H}(x) \preccurlyeq \folp{H}(x)\cdot \Lambda_{\halo H}(C\cdot \folp{H}(x))
\end{equation*}
for some positive constants $C,K>0$. We now reformulate these inequalities in terms of isoperimetric profile.

\begin{corollary}\label{cor:boundonProf}
Let $H$ be a finitely generated amenable group. Let $\halo H$ be a naturally generated and large-scale commutative halo product having consistent blocks. Then, given $p\geq 1$, the $\ell^p$-isoperimetric profile of $\halo H$ satisfies
\begin{equation*}
    \frac{1}{C}\profp{H}\left (\frac{1}{C}\varphi^{-1}\left (x\right )\right )\leq\profp{\halo H}(x)\leq C\prof{H}(C\ln{x})
\end{equation*}
for some positive constant $C>0$, and with $\varphi(x)=x\cdot \Lambda_{\halo H}(x)$, where $\Lambda_{\halo H}$ is the lamp growth sequence of $\halo H$. 
\end{corollary}

\begin{proof}
The inequality $K^{\fol{H}(y)} \preccurlyeq \folp{\halo H}(y)$ provided by Theorem~\ref{th:lowerboundonFol} implies that there exists a constant $C_1>0$ such that
\begin{equation*}
\fol{H}(y)\leq C_1\ln{(\folp{\halo H}(C_1y))},
\end{equation*}
namely $y\leq \prof{H}(C_1\ln{(\folp{\halo H}(C_1y))})$. It suffices to take $y=\frac{\profp{\halo H}(x)}{C_1}$ to deduce a first inequality:
\begin{equation*}
    \profp{\halo H}(x)\leq C_1\prof{H}(C_1\ln{x}).
\end{equation*}
For the second inequality, we know from Proposition~\ref{prop:upperboundonFol} that there is a constant $C_2>0$ such that
\begin{equation*}
    \folp{\halo H}(x) \le \varphi(C_2\cdot\folp{H}(C_2x))
\end{equation*}
for all $x$ large enough. Putting $x=\frac{\profp{H}(y)}{C_2}$ in this inequality, and applying $\profp{\halo H}$ which is increasing, one gets
\begin{equation*}
    \frac{\profp{H}(y)}{C_2} \le \profp{\halo H}(\varphi(C_2y))
\end{equation*}
for all $y$ large enough. It remains to take $y=\frac{1}{C_2}\varphi^{-1}\left (x\right )$ and we get
\begin{equation*}
    \frac{1}{C_2}\profp{H}\left (\frac{1}{C_2}\varphi^{-1}\left (x\right )\right )\leq\profp{\halo H}(x).
\end{equation*}
The corollary finally holds with $C\defeq\max{(C_1,C_2)}$.
\end{proof}

From this corollary, we can deduce estimates of the isoperimetric profiles of iterated halo products $\halo^{\circ n} H$ defined by $\halo^{\circ 0} H=H$ and $\halo^{\circ (n+1)} H=\halo(\halo^{\circ n} H)$.

\begin{corollary}\label{cor:boundonProfIterated}
Let $H$ be a finitely generated amenable group. Let $\halo H$ be a naturally generated and large-scale commutative halo product having consistent blocks. Let $\varphi\colon x\mapsto x\cdot \Lambda_{\halo H}(x)$ and where $\Lambda_{\halo H}$ is the lamp growth sequence of $\halo H$. Then, given $p\geq 1$ and a integer $n\geq 0$, we have
\begin{equation*}
    \frac{1}{C}\profp{H}\left (\Psi_C^{\circ n}(x)\right )\leq\profp{\halo^{\circ n} H}(x)\leq C\prof{H}(C\ln^{\circ n}{x})
\end{equation*}
for some constant $C>0$, with $\Psi_C\colon x\mapsto \frac{1}{C}\varphi^{-1}(x)$.
\end{corollary}

\begin{proof}
We prove the statement by induction over $n\geq 0$. The case $n=0$ is trivial, and the case $n=1$ follows from Corollary~\ref{cor:boundonProf}. Now given $n\geq 2$, assume that the result holds true for $n-1$. There exists $C_1$ such that 
\begin{equation*}
    \frac{1}{C_1}\profp{H}\left (\Phi_{C_1}^{\circ (n-1)}(x)\right )\leq\profp{\halo^{\circ (n-1)} H}(x)\leq C_1\prof{H}(C_1\ln^{\circ (n-1)}{x}).
\end{equation*}
Moreover Corollary~\ref{cor:boundonProf} applied to $\halo^{\circ (n-1)} H$ implies that there exists $C_2>0$ such that
\begin{equation*}
    \frac{1}{C_2}\profp{\halo^{\circ (n-1)} H}\left (\frac{1}{C_2}\varphi^{-1}(x)\right )\leq\profp{\halo^{\circ n} H}(x)\leq C_2\prof{\halo^{\circ (n-1)} H}(C_2\ln{x}).
\end{equation*}
Combining these two estimates, we get the result.
\end{proof}

\subsection{Lampshufflers and lampjugglers}

We now apply our estimates on isoperimetric profiles to concrete examples, using~\cite[Facts~7.12-7.16]{GT24a} that computes the lamp growth sequences of most examples of halo products we are interested in. In this section, we address the case of lampshufflers, lampjugglers and their iterated versions. 

\smallskip

Let us recall that $\ell^p$-isoperimetric profiles of lampshufflers over polynomial growth groups are known:
\begin{equation*}
    \profp{\shuf{H}}(x) \simeq \left(\frac{\ln(x)}{\ln(\ln(x))}\right)^{\frac{1}{d}}
\end{equation*}
for any $p\ge 1$, when $H$ has growth degree $d\ge 1$. This can be directly deduced from~\cite{SCZ21} and~\cite{EZ21}.

\smallskip

From our work, we can deduce the following bounds on profiles of iterated lampjugglers. 

\begin{corollary}\label{cor:encadrementduprofildeshuf}
Let $p\ge 1$ and let $n\geq 0$ be an integer. Let $H$ be a finitely generated amenable group. Let $s\ge 1$ be an integer. Then the $\ell^p$-isoperimetric profile of $\jugglern{n}{s}{H}$ satisfies 
\begin{equation*}
    \profp{H}\left(\frac{1}{C}\frac{\ln^{\circ n}(x)}{\ln^{\circ (n+1)}(x)}\right) \preccurlyeq \profp{\jugglern{n}{s}{H}}(x) \preccurlyeq \prof{H}(C\ln^{\circ n}(x))
\end{equation*}
for some positive constant $C>0$.
\end{corollary}

\begin{proof}
From Corollary~\ref{cor:boundonProfIterated}, we know that 
\begin{equation}
    \profp{H}(\Psi^{\circ n}_{C_1}(x)) \preccurlyeq \profp{\jugglern{n}{s}{H}}(x)
\end{equation}
for some constant $C_1>0$, where $\Psi_{C_1}(x)=\frac{1}{C_1}\varphi^{-1}(x)$
and $\varphi(x)=x\cdot \Lambda_{\juggler{s}{H}}(x)=x\cdot (sx)!$. It remains to find the asymptotics of $\varphi^{-1}(x)$. We have by definition $\varphi^{-1}(x)(s\varphi^{-1}(x))!=x$, and from Stirling's formula we know that
\begin{equation*}
    \varphi^{-1}(x)(s\varphi^{-1}(x))! \sim \varphi^{-1}(x)\left(\frac{s\varphi^{-1}(x)}{e}\right)^{s\varphi^{-1}(x)}\sqrt{2\pi\cdot s\varphi^{-1}(x)}.
\end{equation*}
Taking the logarithm yields $\ln(x)=\ln\left(\varphi^{-1}(x)(s\varphi^{-1}(x))!\right) \sim s\varphi^{-1}(x)\ln(s\varphi^{-1}(x))$ and taking the logarithm once more, it follows that $\ln(\ln(x)) \sim \ln(s\varphi^{-1}(x))$.
Combining these two equivalences, this gives 
\begin{equation*}
    s\varphi^{-1}(x)\sim\frac{\ln(x)}{\ln(\ln(x))}. 
\end{equation*}
We thus get $\Psi^{\circ n}_{C_1}(x)\sim\frac{1}{sC_1}\frac{\ln^{\circ n}(x)}{\ln^{\circ (n+1)}(x)}$. It remains to consider a positive constant $C>sC_1$ and the proof is complete.
\end{proof}

We finally move on to precise estimates, under mild assumptions.

\begin{theorem}\label{th:profilesoflampjugglers}
Let $p\geq 1$ and let $n\geq 0$ be an integer. Let $H$ be a finitely generated amenable group such that $\profp{H}(x)\simeq \prof{H}(x)$. Then the following holds for the $\ell^p$-isoperimetric profile of $\jugglern{n}{s}{H}$.
\begin{itemize}
    \item If $H$ has polynomial growth of degree $d\geq 1$, then
    \begin{equation*}
        \profp{\jugglern{n}{s}{H}}(x)\simeq\left (\frac{\ln^{\circ n}(x)}{\ln^{\circ (n+1)}(x)}\right )^{\frac{1}{d}}.
    \end{equation*}
    \item If the $\ell^p$-isoperimetric profile $\profp{H}$ of $H$ satisfies Assumption~$(\star)$ and $\profp{H}\left(\frac{x}{\ln(x)}\right)\simeq \profp{H}(x)$, then
    \begin{equation*}
        \profp{\jugglern{n}{s}{H}}(x)\simeq\profp{H}(\ln^{\circ n}(x)).
    \end{equation*}
\end{itemize}
\end{theorem}

\begin{proof}
First of all, Assumption~$(\star)$ on the isoperimetric profiles enable us to erase the constants $\frac{1}{C}$ and $C$ in Corollary~\ref{cor:encadrementduprofildeshuf}.\par
Assume that $H$ has polynomial growth and $n=1$. The result for $s=1$ is already known (see~\cite{SCZ21} and~\cite{EZ21}). For $s>1$, we use Corollary~\ref{cor:boundonProfIterated} to get $\left(\frac{\ln(x)}{\ln(\ln(x))}\right)^{\frac{1}{d}}$ as a lower bound, and the fact that $\shuf{H}$ is a subgroup of $\juggler{s}{H}$ implies that we get the same estimate as an upper bound. Now that we have completed the case $n=1$, the general case $n>1$ is an immediate consequence of Corollary~\ref{cor:boundonProfIterated} applied to $n-1$ instead of $n$, $\juggler{s}{H}$ instead of $H$.\par
The second point of the statement is a direct consequence of Corollary~\ref{cor:boundonProfIterated}.
\end{proof}

Thus, lampjuggler groups often have the same $\ell^p$-isoperimetric profile as lampshufflers, even if the two are not quasi-isometric. For instance, if $H=\Z^{d}\wr\text{BS}(1,n)$, $d\ge 1$, $n\ge 2$, then $\shuf{H}$ and $\juggler{s}{H}$ are not quasi-isometric (by~\cite[Theorem~8.7]{GT24a} and~\cite[Corollary~1.4]{Dum25}) but both have $\ell^p$-isoperimetric profile $\simeq \profp{H}(\ln(x)) \simeq \ln(\ln(\ln(x)))$.

\smallskip

Let us record several examples of application of Theorem~\ref{th:profilesoflampjugglers}.
\begin{itemize}
    \item Solvable Baumslag-Solitar groups $\text{BS}(1,n)$, $n\ge 2$, have $\ell^p$-isoperimetric profile $\simeq \ln(x)$. Thus $\shuf{\text{BS}(1,n)}$ has $\ell^p$-isoperimetric profile $\simeq \ln(\ln(x))$. The same applies for lamplighters $F\wr\Z$, where $F$ is a non-trivial finite group;
    \item more generally, lamplighters $F\wr \Z^d$, where $d\ge 1$ and $F$ is non-trivial and finite, have $\ell^p$-isoperimetric profile $\simeq \ln(x)^{\frac{1}{d}}$. In this case, we get that 
    \begin{equation*}
        \profp{\shuf{F\wr\Z^d}}(x) \simeq \ln(\ln(x))^{\frac{1}{d}}. 
    \end{equation*}
    \item for $d\ge 1$, the group $H=\Z\wr\Z^d$ has $\ell^p$-isoperimetric profile $\simeq \left(\frac{\ln(x)}{\ln(\ln(x))}\right)^{\frac{1}{d}}$, so that 
    \begin{equation*}
        \profp{\shuf{\Z\wr\Z^d}}(x) \simeq \left(\frac{\ln(\ln(x))}{\ln(\ln(\ln(x)))}\right)^{\frac{1}{d}}.
    \end{equation*}
\end{itemize}

\begin{example}\label{ex:IsoProfBrieusselZheng}
For any non-decreasing function $f\colon \R_{+}\longrightarrow \R_{+}$ such that $x\longmapsto \frac{x}{f(x)}$ is non-decreasing, Brieussel and Zheng constructed in~\cite[Theorem~1.1]{BZ21} a finitely generated group $H$ with exponential volume growth having $\ell^p$-isoperimetric profile $\profp{H}(x)\simeq \frac{\ln(x)}{f(\ln(x))}$. It is proved in~\cite{Corr24} that $\profp{H}$ satisfies Assumption~$(\star)$ (see the discussion right after Corollary 4.1 in~\cite{Corr24}). Lastly, $H$ also satisfies 
\begin{equation*}
    \profp{H}\left(\frac{x}{\ln(x)}\right) \simeq \prof{H}(x).
\end{equation*}
This follows from the fact that asymptotic equivalence is preserved by $f$: if $g,h\colon \R_{+}\longrightarrow\R_{+}$ are equivalent, then $(1-\varepsilon)h(x)\le g(x)\le (1+\varepsilon)h(x)$ for some $\varepsilon>0$ and for large enough $x$, so that
\begin{equation*}
    f((1-\varepsilon)h(x)) \le f(g(x)) \le f((1+\varepsilon)h(x))
\end{equation*}
since $f$ is non-decreasing. Since $(1-\varepsilon)h(x) \le h(x) \le (1+\varepsilon)h(x)$, one has also
\begin{equation*}
    \frac{(1-\varepsilon)h(x)}{f((1-\varepsilon)h(x))} \le \frac{h(x)}{f(h(x))} \le \frac{(1+\varepsilon)h(x)}{f((1+\varepsilon)h(x))}.
\end{equation*}
since $x\longmapsto\frac{x}{f(x)}$ is non-decreasing, and thus 
\begin{equation*}
    (1-\varepsilon)f(h(x)) \le f((1-\varepsilon)h(x)) \le f(g(x)) \le f((1+\varepsilon)h(x)) \le (1+\varepsilon)f(h(x))
\end{equation*}
for all large enough $x$, whence $f(g(x)) \sim f(h(x))$. Thus, we may apply Theorem~\ref{th:profilesoflampjugglers}, and we obtain that 
\begin{equation*}
    \profp{\shuf{H}}(x) \simeq \frac{\ln(\ln(x))}{f(\ln(\ln(x)))}. 
\end{equation*}
\end{example}

\begin{question}\label{q:profile}
Is it true that all finitely generated amenable groups which do not have polynomial growth satisfy $\profp{H}\left(\frac{x}{\ln(x)}\right) \simeq \profp{H}(x)$?

\end{question}

\begin{remark}\label{rem:IteratedJuggler}
The same strategy shows that if $H$ is a finitely generated amenable group whose isoperimetric profiles satisfy Assumption~$(\star)$ and 
\begin{equation*}
    \profp{H}\left(\frac{x}{\ln(x)}\right) \simeq \profp{H}(x)\; \text{and} \;\profp{H}(x) \simeq \prof{H}(x),
\end{equation*}
then one has 
\begin{equation*}
    \profp{    \juggler{s_{1}}{\juggler{s_{2}}{\dots\juggler{s_{n}}{H}}}       }(x) \simeq \profp{H}(\ln^{\circ n}(x))
\end{equation*}
for any integers $n\ge 1$ and $s_{1},\dots,s_{n}\ge 1$, and any real number $p\ge 1$. 
\end{remark}

\begin{remark}\label{rm:EZnotoptimal}
In the particular case where $s=n=1$ and where $H$ is amenable with $\prof{H}(x)\simeq\ln^{\circ n}(x)$, we get $\prof{\shuf{H}}(x)\simeq\ln^{\circ (n+1)}(x)$. This means that the estimate
\begin{equation*}
    \fol{\shuf{H}}(x) \succcurlyeq V_{H}(x)^{V_{H}(x)}\simeq (e^{x})^{e^{x}}
\end{equation*}
obtained in~\cite[Corollary~1.4]{EZ21} in the case of an exponential growth group $H$ is not sharp.
\end{remark}

\subsection{Lampdesigners}\label{sec:lampdesigner}

Lampdesigners are close to lampjuggler groups, and in fact if $F$ is finite, $\designer{H}$ is a subgroup of $\juggler{|F|}{H}$, via the map 
\begin{align*}
    \begin{array}{cll}
    \designer{H} &\longrightarrow &\juggler{|F|}{H} \\
    ((f,\sigma),h) &\longmapsto &(\sigma', h)
    \end{array}
\end{align*}
where, given a pair $(f,\sigma)\in F\wr_{H}\fsym{H}$, $\sigma'$ is the permutation of $H\times F$ given by $\sigma'(h,i)=(\sigma(h), f(h)i)$. Hence, from Theorem~\ref{th:profilemonotonuousSubgroup}, we get a lower bound on $\ell^p$-isoperimetric profiles of lampdesigners, namely
\begin{equation*}
    \profp{\juggler{|F|}{H}}(x)\preccurlyeq \profp{\designer{H}}(x).
\end{equation*}
Additionally, note that $\designer{H}$ contains $\shuf{H}$ as a subgroup (and also as a quotient), hence
\begin{equation*}
    \profp{\designer{H}}(x) \preccurlyeq \profp{\shuf{H}}(x).
\end{equation*}

We immediately deduce the following exact estimates of the isoperimetric profiles of lampdesigners.

\begin{theorem}\label{th:profileoflampdesigners}
Let $p\geq 1$ and let $n\geq 0$ be an integer. Let $H$ be a finitely generated amenable group such that $\profp{H}(x)\simeq \prof{H}(x)$. Then the following holds for the $\ell^p$-isoperimetric profile of $\designern{n}{H}$.
\begin{itemize}
    \item If $H$ has polynomial growth of degree $d\geq 1$, then
    \begin{equation*}
        \profp{\designern{n}{H}}(x)\simeq\left (\frac{\ln^{\circ n}(x)}{\ln^{\circ (n+1)}(x)}\right )^{\frac{1}{d}}.
    \end{equation*}
    \item If the $\ell^p$-isoperimetric profile $\profp{H}$ of $H$ satisfies Assumption~$(\star)$ and $\profp{H}\left(\frac{x}{\ln(x)}\right)\simeq \profp{H}(x)$, then
    \begin{equation*}
        \profp{\designern{n}{H}}(x)\simeq\profp{H}(\ln^{\circ n}(x)).
    \end{equation*}
\end{itemize}
\end{theorem}

\subsection{Lampcloners and lampupcloners}

Let us now turn to lampcloners and lampupcloners over finite fields.

\begin{corollary}\label{cor:encadrementduprofildecloner}
Let $p\ge 1$ and let $n\geq 0$ be an integer. Let $H$ be a finitely generated amenable group and let $\field$ be a finite field. Then the $\ell^p$-isoperimetric profile of $\clonern{n}{H}$ satisfies 
\begin{equation*}
    \profp{H}\left(\frac{1}{C}\sqrt{\ln^{\circ n}{(x)}}\right) \preccurlyeq \profp{\clonern{n}{H}}(x) \preccurlyeq \prof{H}(C\ln^{\circ n}(x))
\end{equation*}
for some positive constant $C>0$. The same estimates hold for $\profp{\upcloner{\Z^d}}$, $d\geq 1$, where $\Z^d$ is endowed with the lexicographic order.
\end{corollary}

\begin{proof}
From Corollary~\ref{cor:boundonProfIterated}, we know that 
\begin{equation}
    \profp{H}(\Psi^{\circ n}_{C_1}(x)) \preccurlyeq \profp{\clonern{n}{H}}(x)
\end{equation}
for some constant $C_1>0$, where $\Psi_{C_1}(x)=\frac{1}{C_1}\varphi^{-1}(x)$
and $\varphi(x)=x\cdot \Lambda_{\cloner{H}}(x)$. It remains to find the asymptotics of $\varphi^{-1}(x)$.  By definition, we have that $\varphi^{-1}(x)\cdot\Lambda_{\cloner{H}}(\varphi^{-1}(x))=x$, and thus 
\begin{equation*}
    \ln(\varphi^{-1}(x))+\ln(\Lambda_{\cloner{H}}(\varphi^{-1}(x)))=\ln(x).
\end{equation*}
From~\cite[Fact~7.16]{GT24a}, $\ln(\Lambda_{\cloner{H}}(y)) \simeq y^2$, so the above equation tells us that 
\begin{equation*}
    \varphi^{-1}(x)^2 \simeq \ln(x)
\end{equation*}
whence $\varphi^{-1}(x) \simeq \sqrt{\ln(x)}$. We thus get $\Psi^{\circ n}_{C_1}(x)\sim\frac{1}{C_1}\sqrt{\ln^{\circ n}(x)}$. It remains to consider a positive constant $C>C_1$ and the proof is complete.\par
Proposition~\ref{prop:UpclonerFG} implies that we can apply Corollary~\ref{cor:boundonProfIterated} to $\upcloner{\Z^d}$, where $\Z^d$ is endowed with the lexicographic order. We also have $\ln{(\Lambda_{\upcloner{\Z^d}}(y))}\simeq y^2$, so the estimates remain the same.
\end{proof}

We finally move on to precise estimates, under mild assumptions.

\begin{theorem}\label{th:profilesoflampcloners}
Let $p\geq 1$ and let $n\geq 0$ be an integer. Let $H$ be a finitely generated amenable group such that $\profp{H}(x)\simeq \prof{H}(x)$. Let $\field$ be a field. Then the following holds for the $\ell^p$-isoperimetric profile of $\clonern{n}{H}$.
\begin{itemize}
    \item If $H$ has polynomial growth of degree $d\geq 1$, then
    \begin{equation*}
        \left (\ln^{\circ n}(x)\right)^{\frac{1}{2d}}\preccurlyeq\profp{\clonern{n}{H}}(x)\preccurlyeq\left (\ln^{\circ n}(x)\right )^{\frac{1}{d}}.
    \end{equation*}
    The same estimates hold for $\profp{\upcloner{\Z^d}}$, $d\geq 1$, where $\Z^d$ is endowed with the lexicographic order.
    \item If the $\ell^p$-isoperimetric profile $\profp{H}$ of $H$ satisfies Assumption~$(\star)$ and $\profp{H}\left(\sqrt{x}\right)\simeq \profp{H}(x)$, then
    \begin{equation*}
        \profp{\clonern{n}{H}}(x)\simeq\profp{H}(\ln^{\circ n}(x)).
    \end{equation*}
\end{itemize}
\end{theorem}

\begin{proof}
This is an immediate consequence of Corollary~\ref{cor:encadrementduprofildecloner}.
\end{proof}

This corollary applies to many groups that have slow profiles, for instance:
\begin{itemize}
    \item Baumslag Solitar groups $\text{BS}(1,n)$, $n\ge 2$, whose $\ell^p$-isoperimetric profile is $\simeq\ln(x)$. Thus $\cloner{\text{BS}(1,n)}$ has $\ell^p$-isoperimetric profile $\simeq \ln(\ln(x))$ for any $n\ge 2$ and finite field $\field$;
    \item The lamplighter $F\wr\Sigma$, where $\Sigma$ has polynomial growth of degree $d\ge 1$, has $\ell^p$-isoperimetric profile $\simeq (\ln(x))^{\frac{1}{d}}$, whence 
    \begin{equation*}
        \profp{\cloner{F\wr\Sigma}}(x) \simeq \ln(\ln(x))^{\frac{1}{d}}.
    \end{equation*}
\end{itemize}

Inspired by the case of lampshufflers, we would expect that, when $H$ has polynomial growth of degree $d\ge 1$, the $\ell^p$-isoperimetric profile of $\clonern{n}{H}$ is the lower bound that we found in the above statement, namely $\left (\ln^{\circ n}(x)\right )^{\frac{1}{2d}}$. Recall that for lampshufflers, we applied the upper bound from~\cite[Corollary~1.4]{EZ21} which is still optimal in the polynomial growth case, but its proof seems difficult to generalise for lampcloners.

\begin{remark}
We can slightly improve the upper bound in Theorem~\ref{th:profilesoflampcloners}, since for any group $H$, $\shuf{H}$ is a subgroup of $\cloner{H}$, considering the linear automorphisms permuting the vectors of the canonical basis provided by $H$. Hence, if $H$ has polynomial growth of degree $d\ge 1$, we have
\begin{equation*}
    \profp{\clonern{n}{H}}(x)\preccurlyeq\left(\frac{\ln^{\circ n}(x)}{\ln^{\circ (n+1)}(x)}\right)^{\frac{1}{d}}.
\end{equation*}
\end{remark}

Finally, Theorem~\ref{th:profilesoflampcloners} motivates a similar question as Question~\ref{q:profile}.

\begin{question}
Is it true that all finitely generated amenable groups which do not have polynomial growth satisfy $\profp{H}\left(\sqrt{x}\right) \simeq \profp{H}(x)$?
\end{question}

\section{Applications to quasi-isometric classifications and regular maps}\label{sec:appQIandRegularMaps} 

This section is dedicated to our applications about the existence of regular maps between halo products and their iterated versions. It relies on computations realised in Section~\ref{sec:computationsIsoProf}. In fact, for simplicity and conciseness, we will be focusing mainly on lampshufflers, but analogous statements can be derived for lampjugglers, lampdesigners, lampcloners and lampupcloners.

\smallskip 

Let us first distinguish iterated lampshufflers over amenable groups. 

\begin{corollary}\label{cor:shufflersoverslowprofiles}
Let $n,m\ge 1$. Let $H$ be a finitely generated amenable group. Assume that one of the following holds:
\begin{enumerate}[label=(\roman*)]
    \item\label{item:QIpolynomial} $H$ has polynomial growth of degree $d\ge 1$;
    \item\label{item:QInonpolynomial} 
    $\prof{H}$ satisfies Assumption~$(\star)$, $\prof{H}\left(\frac{x}{\ln(x)}\right) \simeq \prof{H}(x)$ and the following property for any integers $k,\ell\ge 0$:
    \begin{equation*}
        \prof{H}(\ln^{\circ k}(x)) \simeq \prof{H}(\ln^{\circ \ell}(x)) \Longrightarrow k=\ell.
    \end{equation*}
\end{enumerate}
Then $\shufn{n}{H}$ and $\shufn{m}{H}$ are quasi-isometric if and only if $n=m$.
\end{corollary}

\begin{proof}
Suppose that $\shufn{n}{H}$ and $\shufn{m}{H}$ are quasi-isometric. In particular, their isoperimetric profiles are asymptotically equivalent, and if $H$ has polynomial growth, we get
\begin{equation*}
    \left(\frac{\ln^{\circ n}(x)}{\ln^{\circ (n+1)}(x)}\right)^{\frac{1}{d}} \simeq \left(\frac{\ln^{\circ m}(x)}{\ln^{\circ (m+1)}(x)}\right)^{\frac{1}{d}}
\end{equation*}
by Theorem~\ref{th:profilesoflampjugglers}, which in turn implies $n=m$. If we are in case~\textit{\ref{item:QInonpolynomial}}, then $\shufn{n}{H}$ has $\ell^1$-isoperimetric profile $\simeq \prof{H}(\ln^{\circ n}(x))$ and $\shufn{m}{H}$ has $\ell^1$-isoperimetric profile $\simeq \prof{H}(\ln^{\circ m}(x))$. Thus $n=m$ using our assumption, and we are done. 
\end{proof}

In practice, assumptions of~\textit{\ref{item:QInonpolynomial}} are easy to check. It holds for instance for any amenable group whose profile is of the form $\prof{H}(x)\simeq \left (\ln^{\circ k}(x)\right )^{\alpha}$ for $\alpha>0$ and $k\ge 0$, such as solvable Baumslag-Solitar groups or lamplighters over polynomial growth groups.

\smallskip

In fact, the isoperimetric profile being monotonuous under regular maps between finitely generated amenable groups, we get more generally:

\begin{corollary}
Let $n,m\ge 1$. Let $H$ be a finitely generated amenable group whose isoperimetric profile $\prof{H}$ satisfies Assumption~$(\star)$. Suppose that $\prof{H}\left(\frac{x}{\ln(x)}\right) \simeq \prof{H}(\ln(x))$ and the following property holds for any integers $k,\ell\ge 0$:
\begin{equation*}
    \prof{H}(\ln^{\circ \ell}(x)) \preccurlyeq \prof{H}(\ln^{\circ k}(x)) \Longrightarrow k\leq\ell.
\end{equation*}
Then there exists a regular map from $\shufn{n}{H}$ to $\shufn{m}{H}$ if and only if $n\le m$.\qed
\end{corollary}

We have similar consequences at the other side of the spectrum:

\begin{corollary}\label{cor:IteratedShufflersPolynomialQIBiLip}
Let $n,m\ge 0$. Let $A$ and $B$ be infinite virtually abelian finitely generated groups, with growth degrees $a$ and $b$ respectively. Then the following are equivalent:
\begin{enumerate}[label=(\roman*)]
    \item\label{item:1QIbilip} $\shufn{n}{A}$ and $\shufn{m}{B}$ are quasi-isometric.
    \item\label{item:2QIbilip} $n=m$ and $a=b$.
    \item\label{item:3QIbilip} $\shufn{n}{A}$ and $\shufn{m}{B}$ are biLipschitz equivalent.
\end{enumerate}
\end{corollary}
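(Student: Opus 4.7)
The plan is to prove the cycle of implications \textit{(iii)} $\Rightarrow$ \textit{(i)} $\Rightarrow$ \textit{(ii)} $\Rightarrow$ \textit{(iii)}. The first arrow is immediate, since a biLipschitz equivalence is automatically a quasi-isometry.

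For \textit{(i)} $\Rightarrow$ \textit{(ii)}, I would compare $\ell^{1}$-isoperimetric profiles, using their monotonicity under quasi-isometries (Theorem~\ref{th:profilemonotonuous}). For a virtually abelian group of growth degree $d$, the profile is $\simeq x^{1/d}$, while for $n\geq 1$, Proposition~\ref{prop:profileofshufnofpolynomialgrowthgroups} gives
\[
    \profp{\shufn{n}{A}}(x)\simeq\left(\frac{\ln^{\circ n}(x)}{\ln^{\circ(n+1)}(x)}\right)^{1/a},
\]
and similarly for $\shufn{m}{B}$. If exactly one of $n,m$ vanishes, a polynomial profile cannot match one bounded above by $\ln(x)$; if both vanish, the equivalence $x^{1/a}\simeq x^{1/b}$ forces $a=b$. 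When $n,m\geq 1$, the quantity $\ln^{\circ k}(x)/\ln^{\circ(k+1)}(x)$ outstrips $\ln^{\circ\ell}(x)/\ln^{\circ(\ell+1)}(x)$ by arbitrary constant powers whenever $k<\ell$, so asymptotic equivalence forces $n=m$; since the quantity in parentheses is then unbounded, equality of the exponents yields $a=b$.

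For \textit{(ii)} $\Rightarrow$ \textit{(iii)}, I would proceed in two independent steps. First, I would show that two virtually abelian groups of the same growth degree $a$ are biLipschitz equivalent. Any such $A$ contains a normal finite-index subgroup isomorphic to $\Z^a$ (take the normal core of a finite-index $\Z^a$), producing an extension $1\to\Z^a\to A\to Q\to 1$ with $Q$ finite. A set-theoretic section gives a biLipschitz bijection $A\to\Z^a\times Q$, and a standard sublattice-packing argument (mapping the $|Q|$ copies of $\Z^a$ onto $|Q|$ pairwise disjoint sublattices exhausting $\Z^a$) converts this into a biLipschitz bijection $A\to\Z^a$; symmetry gives $A\sim_{\mathrm{biLip}}B$. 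Second, I would functorially lift any biLipschitz bijection $\phi\colon A\to B$ to
\[
    \Phi\colon\shuf{A}\longrightarrow\shuf{B},\qquad(\sigma,a)\longmapsto(\phi\circ\sigma\circ\phi^{-1},\phi(a)),
\]
which is again a bijection, and iterate $n$ times to obtain $\shufn{n}{A}\sim_{\mathrm{biLip}}\shufn{n}{B}$.

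The main obstacle will be verifying that $\Phi$ is biLipschitz. The cursor generator $(\mathrm{id}_{A},s)$ of $\shuf{A}$ maps under $\Phi$ to a cursor translation by $\phi(a)^{-1}\phi(as)$ in $B$, of $B$-length at most the biLipschitz constant $C$ of $\phi$. The swap generator $(\tau_{1_{A},s},1_{A})$ is more delicate: it transforms the permutation coordinate by the transposition $\tau_{\phi(a),\phi(as)}$, which swaps two points at $B$-distance at most $C$ but is not itself a generator of $\shuf{B}$. A bubble-sort decomposition along a geodesic between $\phi(a)$ and $\phi(as)$ writes this transposition as a product of $O(C)$ adjacent transpositions in $\sym{B}$, each realizable in $\shuf{B}$ by a single local swap performed after dragging the cursor to the appropriate endpoint; executing the full sequence forward and then backward to restore the cursor costs $O(C)$ generators overall. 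Applying the same argument to $\phi^{-1}$ bounds the distortion of $\Phi^{-1}$, so $\Phi$ is biLipschitz, and iteration concludes the proof.
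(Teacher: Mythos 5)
Your proof is correct and follows the same overall strategy as the paper: the cycle (iii) $\Rightarrow$ (i) $\Rightarrow$ (ii) $\Rightarrow$ (iii), with (i) $\Rightarrow$ (ii) extracted from the isoperimetric profiles via Proposition~\ref{prop:profileofshufnofpolynomialgrowthgroups} and monotonicity, and (ii) $\Rightarrow$ (iii) obtained by first reducing to the case $A=B=\Z^a$ and then lifting a biLipschitz bijection to the iterated lampshufflers. The one genuine difference is that you replace two citations by self-contained arguments: where the paper invokes~\cite[Claim~5.4]{Dum25} for the fact that two infinite virtually abelian groups of the same growth degree are biLipschitz equivalent, you sketch the normal-core and sublattice-packing argument directly; and where the paper invokes~\cite[Lemma~8.8]{GT24a} for the functorial biLipschitz lift $\phi\mapsto\Phi$, $\Phi(\sigma,a)=(\phi\sigma\phi^{-1},\phi(a))$, you reprove it by bounding the distortion of the generators, handling the swap generator via the bubble-sort decomposition of $\tau_{\phi(a),\phi(as)}$ into $O(C)$ adjacent transpositions along a geodesic. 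Both arguments are standard, and your version has the advantage of making the corollary self-contained; the cost is length, and one should be a bit careful in the bubble-sort step that the cursor detour is also $O(C)$ (it is, since the geodesic has length $\le C$). A minor cosmetic point: in the (i) $\Rightarrow$ (ii) step the paper's argument of taking $\ln$ twice is cleaner than the phrase "outstrips by arbitrary constant powers", though the conclusion is the same.
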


\begin{proof}
The implication~\textit{\ref{item:3QIbilip}} $\Longrightarrow$~\textit{\ref{item:1QIbilip}} is obvious. 

\smallskip

\noindent We prove~\textit{\ref{item:1QIbilip}} $\Longrightarrow$~\textit{\ref{item:2QIbilip}}. Assume that $\shufn{n}{A}$ and $\shufn{m}{B}$ are quasi-isometric, so that they have asymptotically equivalent isoperimetric profiles. By Theorem~\ref{th:profilesoflampjugglers}, we then have 
\begin{equation}\label{eq:ComparisonProfile}
    \left(\frac{\ln^{\circ n}(x)}{\ln^{\circ (n+1)}(x)}\right)^{\frac{1}{a}} \simeq \left(\frac{\ln^{\circ m}(x)}{\ln^{\circ (m+1)}(x)}\right)^{\frac{1}{b}}
\end{equation}
and taking the logarithm, it follows that 
\begin{equation*}
    \ln\left(\frac{\ln^{\circ n}(x)}{\ln^{\circ (n+1)}(x)}\right) \simeq \ln\left(\frac{\ln^{\circ m}(x)}{\ln^{\circ (m+1)}(x)}\right).
\end{equation*}
The left-hand side is equivalent to $\ln^{\circ (n+1)}(x)$ and the right-hand side is equivalent to $\ln^{\circ (m+1)}(x)$, so that $n+1=m+1$, i.e. $n=m$. Re-injecting this information in~\eqref{eq:ComparisonProfile} now implies that $a=b$. 

\smallskip

\noindent If $n=m$ and $a=b$, then $A$ and $B$ are both biLipschitz equivalent to $\Z^{a}$~\cite[Claim 5.4]{Dum25}, and thus $A$ and $B$ are biLipschitz equivalent. Thus, by~\cite[Lemma~8.8]{GT24a}, there is a biLipschitz equivalence from $\shuf{A}$ to $\shuf{B}$. Iterating this, we get a biLipschitz equivalence 
\begin{equation*}
    \shufn{n}{A}\longrightarrow \shufn{n}{B}
\end{equation*}
as claimed. This shows~\textit{\ref{item:2QIbilip}} $\Longrightarrow$~\textit{\ref{item:3QIbilip}} and concludes the proof.
\end{proof}

\begin{remark}
For the broader class of virtually nilpotent groups, some implications still hold and some may fail. For instance,~\textit{\ref{item:1QIbilip}} $\Longrightarrow$~\textit{\ref{item:2QIbilip}} remains true, but the converse is false. For instance, $\Z^4$ and the Heisenberg group $H$ over $\Z$ both have growth degree $4$, but $\shuf{\Z^4}$ and $\shuf{H}$ are not quasi-isometric by~\cite[Corollary~8.9]{GT24a}, since $\Z^4$ and $H$ are not biLipschitz equivalent (e.g. they have different asymptotic dimensions).  
\end{remark}

For more general maps (e.g. regular maps), the isoperimetric profile is not sufficient to detect a constraint on polynomial growth degrees. However, asymptotic dimension does provide an inequality since, if $A$ has finite asymptotic dimension, then $\text{asdim}(\shuf{A})=\text{asdim}(A)$. Indeed, since $A$ is a subgroup of $\shuf{A}$, one has $\text{asdim}(A) \le \text{asdim}(\shuf{A})$, and on the other hand, since $\shuf{A}$ fits into a short exact sequence with kernel $\fsym{A}$, whose asymptotic dimension is $0$ as it is locally finite, and quotient $A$, one also has $\text{asdim}(\shuf{A}) \le \text{asdim}(A)$ (we refer the reader to the nice survey~\cite{BD08} for all these facts on asymptotic dimension). Iterating, we get 
\begin{equation*}
    \text{asdim}(\shufn{n}{A})=\text{asdim}(A)
\end{equation*}
for all $n\ge 0$. 

Note also that, if $A$ is virtually abelian, then its asymptotic dimension coincides with its growth degree. 

\begin{corollary}\label{cor:IteratedShufflersPolynomialRegularMap}
Let $n$ and $m$ be two natural integers. Let $A$ and $B$ be infinite virtually abelian finitely generated groups, with growth degrees $a$ and $b$ respectively. If there exists a regular map 
\begin{equation*}
    \shufn{n}{A}\longrightarrow \shufn{m}{B}
\end{equation*}
then $n\le m$ and $a\le b$. 
\end{corollary}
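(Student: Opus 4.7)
The strategy is to extract two independent constraints from the regular map $f\colon \shufn{n}{A}\longrightarrow \shufn{m}{B}$ by invoking two invariants that are monotone under regular maps: the isoperimetric profile, which forces $n \le m$, and the asymptotic dimension, which forces $a \le b$.

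For the first step, Theorem~\ref{th:profilemonotonuous} applied to $f$ gives the domination $\prof{\shufn{m}{B}}(x) \preccurlyeq \prof{\shufn{n}{A}}(x)$. Combining the classical estimate $\prof{C}(x) \simeq x^{1/c}$ for a virtually abelian group $C$ of degree $c$ (needed when $n = 0$ or $m = 0$) with Proposition~\ref{prop:profileofshufnofpolynomialgrowthgroups} (used when $n \ge 1$), this domination becomes an explicit asymptotic comparison. When $n \ge 1$ and $m = 0$, the left-hand side is polynomial in $x$ while the right-hand side is sub-polynomial, an immediate contradiction. When $1 \le m < n$, the inequality
\begin{equation*}
\left(\frac{\ln^{\circ m}(x)}{\ln^{\circ (m+1)}(x)}\right)^{1/b} \preccurlyeq \left(\frac{\ln^{\circ n}(x)}{\ln^{\circ (n+1)}(x)}\right)^{1/a}
\end{equation*}
reduces, after taking logarithms (which preserves $\preccurlyeq$ for unbounded non-decreasing functions) and using the equivalence $\ln\bigl(\ln^{\circ k}(x)/\ln^{\circ(k+1)}(x)\bigr) \sim \ln^{\circ(k+1)}(x)$, to $\ln^{\circ(m+1)}(x) \preccurlyeq \ln^{\circ(n+1)}(x)$ up to multiplicative constants, which fails as soon as $m + 1 < n + 1$. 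The remaining case $n = 0$ being trivial, $n \le m$ always.

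For the second step, recall from the paragraph preceding the statement that the asymptotic dimension of an iterated lampshuffler over a virtually abelian base equals that of the base, so $\text{asdim}(\shufn{n}{A}) = a$ and $\text{asdim}(\shufn{m}{B}) = b$. Using the monotonicity of asymptotic dimension under regular maps between bounded-geometry metric spaces (which transports bounded covers with controlled multiplicity from target to source thanks to the uniform bound on fiber sizes), the existence of $f$ then yields $a \le b$.

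The main obstacle is the first step, as the asymptotic comparison of nested iterated logarithms and the treatment of the boundary cases $n = 0$ or $m = 0$ demand careful bookkeeping; once this is done, the second step reduces to invoking standard monotonicity of asymptotic dimension, together with the extension-by-locally-finite-kernel calculation already carried out in the paragraph preceding the statement.
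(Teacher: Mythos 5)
Your proposal is correct and follows essentially the same two-pronged approach as the paper: monotonicity of the isoperimetric profile under regular maps (Theorem~\ref{th:profilemonotonuous}) combined with Proposition~\ref{prop:profileofshufnofpolynomialgrowthgroups} to rule out $m<n$, and monotonicity of asymptotic dimension combined with $\text{asdim}(\shufn{n}{A})=\text{asdim}(A)$ to obtain $a\le b$. The only slight divergence is in the boundary case $n\ge 1$, $m=0$: you observe that a polynomial profile cannot dominate a sub-polynomial one, whereas the paper rules it out by volume growth (an exponential-growth group cannot regularly embed into a polynomial-growth one); both arguments are sound and essentially interchangeable here.
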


\begin{proof}
Assume that such a map exists. If $n=0$ there is nothing to prove, so we assume that $n\ge 1$. In this case, we cannot have $m=0$, because a group of exponential growth cannot regularly embed into a polynomial growth group. Hence $m\ge 1$ as well. Now, by Theorem~\ref{th:profilemonotonuous} and Theorem~\ref{th:profilesoflampjugglers}, one has 
\begin{equation}\label{eq:ComparisonProfile2}
    \left(\frac{\ln^{\circ m}(x)}{\ln^{\circ (m+1)}(x)}\right)^{\frac{1}{b}} \preccurlyeq \left(\frac{\ln^{\circ n}(x)}{\ln^{\circ (n+1)}(x)}\right)^{\frac{1}{a}}
\end{equation}
and taking the logarithm implies 
\begin{equation*}
    \ln\left(\frac{\ln^{\circ m}(x)}{\ln^{\circ (m+1)}(x)}\right) \preccurlyeq \ln\left(\frac{\ln^{\circ n}(x)}{\ln^{\circ (n+1)}(x)}\right).
\end{equation*}
The left-hand side is $\simeq \ln^{\circ(m+1)}(x)$, and the right-hand side is $\simeq \ln^{\circ (n+1)}(x)$, so it follows that $n+1\le m+1$, i.e. $n\le m$. Additionally, since asymptotic dimension is monotonuous under regular maps one gets 
\begin{equation*}
    a=\text{asdim}(A)=\text{asdim}(\shufn{n}{A})\le \text{asdim}(\shufn{m}{B})=\text{asdim}(B)=b
\end{equation*}
as claimed. 
\end{proof}

\begin{remark}\label{rm:extensionsfornilpotentgroups}
On the other hand, for more general polynomial growth groups $A$ and $B$, we can only conclude that the 
existence of a quasi-isometry between $\shufn{n}{A}$ and $\shufn{m}{B}$ imposes $n=m$, $a=b$ and $\text{asdim}(A)=\text{asdim}(B)$, and the existence of a regular map 
\begin{equation*}
    \shufn{n}{A}\longrightarrow \shufn{m}{B}
\end{equation*}
implies $n\le m$ and $\text{asdim}(A) \le \text{asdim}(B)$. 
\end{remark}

As an immediate consequence of Corollary~\ref{cor:IteratedShufflersPolynomialRegularMap}, we have the following.

\begin{corollary}\label{cor:QIRegularMap}
Let $n,m\ge 0$. Let $A$ and $B$ be infinite virtually abelian finitely generated groups, with growth degrees $a$ and $b$ respectively. Then the following are equivalent:
\begin{enumerate}[label=(\roman*)]
    \item the three equivalent assertions of Corollary~\ref{cor:IteratedShufflersPolynomialQIBiLip} are satisfied;
    \item there exist a regular map from $\shufn{n}{A}$ to $\shufn{m}{B}$, and a regular map from $\shufn{m}{B}$ to $\shufn{n}{A}$.\qed
\end{enumerate}
\end{corollary}

\smallskip

Thus, asymptotic dimension is an obstruction to the existence of a regular map $\shuf{\Z^d}\longrightarrow \shuf{\Z^k}$ when $d>k$. Hence, in the spirit of~\cite[Question~5.4]{BST12}, a natural question arises: can we also rule out the existence of such maps if we increase the asymptotic dimension of the target space, for instance with a polynomial growth factor? It turns out that the answer is positive, and that the isoperimetric profile still gives an obstruction, whereas asymptotic dimension becomes inefficient.

\smallskip

Indeed, thanks to the following lemma, under the assumption that $\prof{H}\succcurlyeq \prof{G}$, we have general estimates on the isoperimetric profile of $G\times H$ in terms of $\prof{G}$ and $\prof{H}$.

\begin{lemma}\label{lem:profileofdirectproducts}
Let $G$ and $H$ be finitely generated amenable groups. If $\prof{H}(n)\succcurlyeq \prof{G}(n)$, then one has 
\begin{equation*}
    \prof{G}(\sqrt{n}) \preccurlyeq \prof{G\times H}(n) \preccurlyeq \prof{G}(n).
\end{equation*}
\end{lemma}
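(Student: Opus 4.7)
The upper bound $\prof{G\times H}(n)\preccurlyeq\prof{G}(n)$ is immediate from Theorem~\ref{th:profilemonotonuousSubgroup}: since $G$ embeds as a finitely generated subgroup of $G\times H$, the monotonicity of the isoperimetric profile under finitely generated subgroups gives the inequality at once.

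For the lower bound, my plan is to build almost-invariant sets in $G\times H$ by taking products of almost-invariant sets in the two factors. Fix finite symmetric generating sets $S_G$ of $G$ and $S_H$ of $H$, and equip $G\times H$ with $S_{G\times H}\defeq(S_G\times\{1_H\})\cup(\{1_G\}\times S_H)$. For any finite subsets $A\subset G$ and $B\subset H$, an elementary check yields the product-boundary inequality
\begin{equation*}
|\partial_{G\times H}(A\times B)|\le|\partial_G A|\cdot|B|+|A|\cdot|\partial_H B|,
\end{equation*}
which in turn gives
\begin{equation*}
\frac{|\partial_{G\times H}(A\times B)|}{|A\times B|}\le\frac{|\partial_G A|}{|A|}+\frac{|\partial_H B|}{|B|}.
\end{equation*}

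Given $n$, I would then choose $A\subset G$ and $B\subset H$ with $|A|,|B|\simeq\sqrt{n}$ and approximately realizing $\prof{G}(\sqrt{n})$ and $\prof{H}(\sqrt{n})$ respectively, so that $|A\times B|\simeq n$. The hypothesis $\prof{H}\succcurlyeq\prof{G}$ immediately implies $\prof{H}(\sqrt{n})\succcurlyeq\prof{G}(\sqrt{n})$ by substituting $\sqrt{n}$ for $n$ in the defining inequality of $\succcurlyeq$. Hence both $\frac{|\partial_G A|}{|A|}$ and $\frac{|\partial_H B|}{|B|}$ are bounded above by a constant multiple of $1/\prof{G}(\sqrt{n})$, and the product $A\times B$ witnesses $\prof{G\times H}(n)\succcurlyeq\prof{G}(\sqrt{n})$.

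No serious obstacle is expected: the only point requiring a little care is the bookkeeping of the multiplicative constants absorbed by $\preccurlyeq$, in particular ensuring that one can choose $|A|$ and $|B|$ so that $|A||B|$ is comparable to $n$ (which is easily done by approximating $\sqrt{n}$ by $\lfloor\sqrt{n}\rfloor$). Note that Assumption~$(\star)$ is not needed here.
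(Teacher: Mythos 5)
Your proof is correct and follows essentially the same route as the paper: the upper bound via subgroup monotonicity (Theorem~\ref{th:profilemonotonuousSubgroup}), and the lower bound via the product-set boundary estimate $|\partial_{G\times H}(A\times B)|\le|\partial_G A|\cdot|B|+|A|\cdot|\partial_H B|$ combined with the hypothesis $\prof{H}\succcurlyeq\prof{G}$. The only cosmetic difference is that the paper takes optimal sets at scale $n$, deduces a bound on $\prof{G\times H}(n^2)$, and then performs the change of variable via Lemma~\ref{lem:UsefulLemmaProfile}, whereas you substitute $\sqrt{n}$ from the start.
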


To prove it, we need the following general fact.

\begin{lemma}\label{lem:UsefulLemmaProfile}
Let $f,g,\varphi\colon \left[1,+\infty\right[\longrightarrow \left[1,+\infty\right[$ be three non-decreasing maps, with $\varphi$ injective and satisfying $\varphi(x)\underset{x\to +\infty}{\to}+\infty$. Assume that there exists a positive constant $D>0$ such that
\begin{equation*}
        g(x)\preccurlyeq f(D\varphi(x)).
\end{equation*}
Then we have
\begin{equation*}
        g(K\varphi^{-1}(x))\preccurlyeq f(x)
\end{equation*}
for some positive constant $K>0$. If furthermore $g$ satisfies Assumption~$(\star)$, then we have
\begin{equation*}
        g(\varphi^{-1}(x))\preccurlyeq f(x).
\end{equation*}
\end{lemma}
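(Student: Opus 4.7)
The plan is to unwind the definition of $\preccurlyeq$, perform a change of variables inverting $\varphi$, and then use Assumption~$(\star)$ to absorb the leftover multiplicative constant in the second part.

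First I would translate the hypothesis $g(x)\preccurlyeq f(D\varphi(x))$ into concrete inequalities: there exist constants $C>0$ and $C'>0$, and some $x_{0}\geq 1$, such that
\begin{equation*}
    g(x) \leq C' f(D\varphi(Cx)) \quad \text{for all } x \geq x_{0}.
\end{equation*}
The natural move is to set $y = \varphi(Cx)$, which is legitimate because $\varphi$ is non-decreasing and injective (hence invertible on its image) and tends to $+\infty$; so as $x$ ranges over $[x_{0},+\infty)$, $y$ ranges over $[\varphi(Cx_{0}),+\infty)$. Substituting $x = C^{-1}\varphi^{-1}(y)$ in the inequality above yields
\begin{equation*}
    g\!\left(\frac{1}{C}\varphi^{-1}(y)\right) \leq C' f(Dy)
\end{equation*}
for every $y$ sufficiently large. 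Setting $K \defeq 1/C > 0$, this is exactly $g(K\varphi^{-1}(y)) \preccurlyeq f(y)$, proving the first assertion.

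For the second assertion, I would exploit Assumption~$(\star)$ on $g$ to remove the constant $K$ in front of $\varphi^{-1}(x)$. Two easy cases arise: if $K \geq 1$, monotonicity of $g$ gives directly $g(\varphi^{-1}(x)) \leq g(K\varphi^{-1}(x)) \preccurlyeq f(x)$. If $K < 1$, writing $\varphi^{-1}(x) = K^{-1}\cdot K\varphi^{-1}(x)$ and invoking $(\star)$ with the constant $K^{-1}$ yields $g(\varphi^{-1}(x)) = O(g(K\varphi^{-1}(x)))$, and combining with the first assertion gives $g(\varphi^{-1}(x)) \preccurlyeq f(x)$ as required.

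There is no real obstacle here: the argument is a bookkeeping change of variables, the only subtlety being to carefully keep track of which multiplicative constants sit on the argument (absorbed by $\preccurlyeq$) and which sit outside. The use of Assumption~$(\star)$ is the standard device, already invoked in Lemma~\ref{lem:profileandequivalents}, for trading a multiplicative constant in the argument against a multiplicative constant outside.
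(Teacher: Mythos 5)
Your proposal follows the same route as the paper: invert the hypothesis by a change of variables in the argument of $\varphi$, then use Assumption~$(\star)$ to trade the constant $K$ sitting inside $g$ for a constant outside; the second half of your argument is identical to the paper's. There is, however, one step that does not hold as written: the claim that as $x$ ranges over $[x_{0},+\infty)$ the variable $y=\varphi(Cx)$ ranges over all of $[\varphi(Cx_{0}),+\infty)$. The map $\varphi$ is only assumed non-decreasing and injective (hence strictly increasing), not continuous, so its image may omit intervals, and these gaps can be multiplicatively large; for a $y$ in such a gap the substitution $x=C^{-1}\varphi^{-1}(y)$ lands at a point where $\varphi(Cx)$ may be much larger than $y$, and the desired bound $g(K\varphi^{-1}(y))\leq C'f(Dy)$ is not obtained from the hypothesis. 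The paper sidesteps this by bracketing: for each large $x$ it chooses an integer $n\geq 1$ with $D\varphi(Cn)\leq x\leq D\varphi(C(n+1))$ and applies the hypothesis at $n$, using monotonicity of $f$ and $g$ on both sides; this is why its constant is $K=\tfrac{1}{2C}$ rather than $\tfrac{1}{C}$. Your argument is repaired in the same spirit by taking $K$ strictly smaller than $\tfrac{1}{C}$: then $CK\varphi^{-1}(y)<\varphi^{-1}(y)$, hence $\varphi\bigl(CK\varphi^{-1}(y)\bigr)\leq y$ for every large $y$, and applying the hypothesis at the point $K\varphi^{-1}(y)$ gives $g(K\varphi^{-1}(y))\leq C'f(Dy)$ directly. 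With that adjustment the proof is complete.
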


\begin{proof}
By assumption, there exists a positive constant $C>0$ such that
\begin{equation*}
    g(x)\le Cf(D\varphi(Cx))
\end{equation*}
for all $x$ large enough. Let $x$ be a real number greater than $D\varphi(C)$, and let $n\geq 1$ be an integer such that $D\varphi(Cn)\le x\le D\varphi(C(n+1))$. Then we have
\begin{equation*}
    Cf(x)\ge Cf(D\varphi(Cn))\ge g(n)\ge g\left(\frac{n+1}{2}\right)\ge g\left (\frac{\varphi^{-1}(\frac{x}{D})}{2C}\right),
\end{equation*}
which can be reformulated as
\begin{equation*}
    g\left(K\varphi^{-1}(y)\right)=O\left(f(Dy)\right),
\end{equation*}
taking $y=\frac{x}{D}$ and $K=\frac{1}{2C}$. This shows the first part of the statement. Additionally, if $g$ satisfies Assumption~$(\star)$, then we have
\begin{equation*}
g\left(\varphi^{-1}\left(y\right)\right)=g\left(\frac{1}{K}\cdot K\varphi^{-1}(y)\right)=O\left(g\left(K\varphi^{-1}(y)\right)\right)=O\left(f(Dy)\right),
\end{equation*}
which concludes the proof.
\end{proof}

\begin{proof}[Proof of Lemma~\ref{lem:profileofdirectproducts}]
The estimate $\prof{G\times H}(n) \preccurlyeq \prof{G}(n)$ is a consequence of the fact that $G$ is a subgroup of $G\times H$ and Theorem~\ref{th:profilemonotonuousSubgroup}. Let us focus on the other inequality. Fix $n\in\N$ and subsets $A_{n}\subset G$, $B_{n}\subset H$ that realise $\prof{G}(n)$ and $\prof{H}(n)$ respectively, i.e. $|A_{n}|, |B_{n}|\le n$ and 
\begin{equation*}
    \prof{G}(n)=\frac{\left|A_{n}\right|}{\left|\partial_{G} A_{n}\right|},\; \prof{H}(n)=\frac{\left|B_{n}\right|}{\left|\partial_{H} B_{n}\right|}.
\end{equation*}
Then $A_{n}\times B_{n}\subset G\times H$ has cardinality $\le n^2$, and its boundary is given by 
\begin{equation*}
    \partial_{G\times H}(A_{n}\times B_{n}) = (\partial_{G}A_{n}\times B_{n})\cup(A_{n}\times \partial_{H}B_{n})
\end{equation*}
whence $\left|\partial_{G\times H}(A_{n}\times B_{n})\right| \le |\partial_{G}A_{n}|\cdot|B_{n}|+|A_{n}|\cdot|\partial_{H}B_{n}|$. Thus one gets 
\begin{equation*}
    \frac{\left|\partial_{G\times H}(A_{n}\times B_{n})\right|}{|A_{n}\times B_{n}|} \le \frac{|\partial_{G}A_{n}|\cdot|B_{n}|+|A_{n}|\cdot|\partial_{H}B_{n}|}{|A_{n}||B_{n}|}=\frac{|\partial_{G}A_{n}|}{|A_{n}|}+\frac{|\partial_{H}B_{n}|}{|B_{n}|}
\end{equation*}
and it follows that 
\begin{equation*}
    \prof{G\times H}(n^2) \ge \frac{|A_{n}\times B_{n}|}{\left|\partial_{G\times H}(A_{n}\times B_{n})\right|}\ge \frac{1}{\frac{|\partial_{G}A_{n}|}{|A_{n}|}+\frac{|\partial_{H}B_{n}|}{|B_{n}|}}=\frac{1}{\frac{1}{\prof{G}(n)}+\frac{1}{\prof{H}(n)}}.
\end{equation*}
Now, using Lemma~\ref{lem:UsefulLemmaProfile}, there exists a positive constant $K>0$ such that
\begin{equation*}
    \prof{G\times H}(x)\succcurlyeq\frac{1}{\frac{1}{\prof{G}\left(K\sqrt{x}\right)}+\frac{1}{\prof{H}\left(K\sqrt{x}\right)}}=\frac{1}{\frac{1}{\prof{G}\left(\sqrt{K^2x}\right)}+\frac{1}{\prof{H}\left(\sqrt{K^2x}\right)}}\succcurlyeq \frac{1}{\frac{1}{\prof{G}(\sqrt{x})}+\frac{1}{\prof{H}(\sqrt{x})}}
\end{equation*}
for all $x$ large enough. By assumption, $\prof{H}\succcurlyeq \prof{G}$, so that 
\begin{equation*}
    \prof{G\times H}\succcurlyeq \frac{1}{\frac{2}{\prof{G}(\sqrt{\cdot})}} \simeq \prof{G}(\sqrt{\cdot})
\end{equation*}
as claimed.
\end{proof}

Thus, if additionally the isoperimetric profile of $G$ satisfies $\prof{G}(\sqrt{\cdot})\simeq \prof{G}(\cdot)$, then $\prof{G\times H}\simeq \prof{G}$. This happens for many groups $G$ that have slow enough profiles, for instance:
\begin{itemize}
    \item Any polycyclic group with exponential growth, and more generally any GES group with exponential growth~\cite[Corollary~5]{Tes13};
    \item $F\wr\Sigma$, or $\shuf{\Sigma}$, where $F$ is finite and $\Sigma$ has polynomial growth;
    \item $\shufn{n}{H}$, where $H$ has profile $\prof{H}(x)\simeq \left (\ln^{\circ k}(x)\right )^{\alpha}$, for some $\alpha>0$ and integer $k\ge 1$.
\end{itemize}

As a concrete example, we have for instance: 

\begin{corollary}
Let $d,k,p\ge 1$ be three integers. There exists a regular map 
\begin{equation*}
    \shuf{\Z^d} \longrightarrow \Z^{p}\times\shuf{\Z^k}
\end{equation*}
if and only if $d\le k$.\qed
\end{corollary}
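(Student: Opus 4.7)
For the forward implication, the strategy is direct: when $d \le k$, $\Z^d$ embeds as a subgroup of $\Z^k$, which induces an embedding $\shuf{\Z^d}\hookrightarrow \shuf{\Z^k}$ of lampshufflers (the action of $\Z^d$ on $\fsym{\Z^d}$ is the restriction of the action of $\Z^k$ on $\fsym{\Z^k}$ since $\Z^d\subset \Z^k$ is invariant under translation by elements of $\Z^d$). Composing with the trivial inclusion $\shuf{\Z^k}\hookrightarrow \Z^p\times\shuf{\Z^k}$ and noting that a subgroup inclusion between finitely generated groups is always $1$-Lipschitz with singleton pre-images, we obtain a regular map.

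For the converse, the plan is to reduce the problem to an inequality between isoperimetric profiles that, thanks to the computations from the previous section, is already known to detect the dimension $d$. First, I would apply Lemma~\ref{lem:profileofdirectproducts} with $G=\shuf{\Z^k}$ and $H=\Z^p$: since Proposition~\ref{prop:profileofshufnofpolynomialgrowthgroups} yields $\prof{\shuf{\Z^k}}(n)\simeq \bigl(\ln(n)/\ln\ln(n)\bigr)^{1/k}$ while $\prof{\Z^p}(n)\simeq n^{1/p}$, one certainly has $\prof{\Z^p}\succcurlyeq \prof{\shuf{\Z^k}}$, so the lemma gives
\begin{equation*}
    \prof{\shuf{\Z^k}}(\sqrt{n}) \preccurlyeq \prof{\Z^p\times\shuf{\Z^k}}(n) \preccurlyeq \prof{\shuf{\Z^k}}(n).
\end{equation*}
Because $\ln(\sqrt{n})/\ln\ln(\sqrt{n})\simeq \ln(n)/\ln\ln(n)$, the lower and upper bounds coincide asymptotically, so $\prof{\Z^p\times\shuf{\Z^k}}(n)\simeq \prof{\shuf{\Z^k}}(n)\simeq \bigl(\ln(n)/\ln\ln(n)\bigr)^{1/k}$. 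In other words, the polynomial-growth factor $\Z^p$ does not affect the profile, which is the key observation.

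Then I would invoke the monotonicity of the isoperimetric profile under regular maps (Theorem~\ref{th:profilemonotonuous}). The existence of a regular map $\shuf{\Z^d}\longrightarrow \Z^p\times\shuf{\Z^k}$ yields $\prof{\Z^p\times\shuf{\Z^k}}\preccurlyeq \prof{\shuf{\Z^d}}$, and combined with the previous step and Proposition~\ref{prop:profileofshufnofpolynomialgrowthgroups} again, this reads
\begin{equation*}
    \left(\frac{\ln(n)}{\ln\ln(n)}\right)^{1/k}\preccurlyeq \left(\frac{\ln(n)}{\ln\ln(n)}\right)^{1/d},
\end{equation*}
which forces $1/k\le 1/d$, i.e. $d\le k$, concluding the argument.

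\textbf{Main obstacle.} There is no serious technical difficulty: all the heavy lifting (the exact asymptotics of $\prof{\shufn{n}{\Z^d}}$, the behaviour of profiles of direct products) has been done in the previous two sections. The only point requiring attention is verifying that $\Z^p$ is invisible at the level of the isoperimetric profile of the product, which is precisely the content of Lemma~\ref{lem:profileofdirectproducts} combined with the fact that the lampshuffler profile is invariant (up to $\simeq$) under the substitution $n\mapsto \sqrt{n}$; this last invariance is exactly what makes isoperimetric profiles strictly finer than volume growth or asymptotic dimension in this setting.
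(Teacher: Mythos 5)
Your proof is correct and follows exactly the argument the paper intends: the backward implication via the subgroup inclusion $\shuf{\Z^d}\le\shuf{\Z^k}\le\Z^p\times\shuf{\Z^k}$, and the forward implication by combining Lemma~\ref{lem:profileofdirectproducts} (to show $\Z^p$ is invisible in the profile of the product, since $\prof{\shuf{\Z^k}}(\sqrt{n})\simeq\prof{\shuf{\Z^k}}(n)$), Proposition~\ref{prop:profileofshufnofpolynomialgrowthgroups}, and the monotonicity Theorem~\ref{th:profilemonotonuous}. The paper leaves the corollary without a written proof precisely because it follows immediately from the discussion preceding it, and your argument reconstructs that discussion faithfully.
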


Finally, we want to compare lampshufflers to lamplighters. We already know from Proposition~\ref{prop:A2} that a wreath product over a subgroup of $H$ coarsely embeds into $\shuf{H}$. In~\cite[Corollary~7.17]{GT24a}, given two groups $H$ and $G$ satisfying some mild assumptions, it is proved that $\shuf{H}$ does not quasi-isometrically or coarsely embed into a lamplighter $E\wr G$. Our computations allow us to prove an iterated version of this result for free abelian groups: there is no regular map 
\begin{equation*}
\shufn{n}{\Z^d}\longrightarrow \Z/2\Z\wr(\Z/2\Z\wr(\dots(\Z/2\Z\wr \Z^d)))
\end{equation*}
where the wreath product is iterated $n$ times. In fact, we have more generally the following.

\begin{corollary}\label{cor:iteratedshufintoiteratedlamplighter}
Let $G$ and $H$ be finitely generated amenable groups. Suppose that there is a regular map 
\begin{equation*}
\shufn{n}{H}\longrightarrow \Z/2\Z\wr(\Z/2\Z\wr(\dots\wr(\Z/2\Z\wr G))),
\end{equation*}
where the wreath product is iterated $n$ times, $n\ge 1$. Then the following holds.
\begin{enumerate}[label=(\roman*)]
      \item\label{item:1shufintolamp} If $H$ has polynomial growth of degree $d\ge 1$, then
      \begin{equation*}
         \prof{G}(x)\preccurlyeq\frac{\prof{H}(x)}{\left(\ln(x)\right)^{\frac{1}{d}}};
      \end{equation*}
      \item\label{item:2shufintolamp} If $\prof{H}$ satisfies Assumption ~$(\star)$ and $\prof{H}\left(\frac{x}{\ln(x)}\right)\simeq\prof{H}(x)$, then
      \begin{equation*}
        \prof{G}(x)\preccurlyeq\prof{H}(x).
      \end{equation*}
\end{enumerate}
\end{corollary}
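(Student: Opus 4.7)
The plan is to combine the monotonicity of the isoperimetric profile under regular maps (Theorem~\ref{th:profilemonotonuous}) with a lower bound on the profile of the iterated lamplighter appearing on the right-hand side and with the computations of $\prof{\shufn{n}{H}}$ carried out in Section~\ref{sec:computationsIsoProf}.

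Denote by $L_{n}(G)$ the iterated lamplighter $\Z/2\Z\wr(\Z/2\Z\wr(\dots\wr(\Z/2\Z\wr G)))$ appearing in the statement. The unconditional direction of Erschler's formula for a wreath product $F\wr K$ with $F$ finite and $K$ finitely generated amenable, namely $\text{F\o l}_{F\wr K}(x)\preccurlyeq |F|^{\text{F\o l}_{K}(x)}$, translates after inversion into the lower bound
\begin{equation*}
    \prof{F\wr K}(y)\succcurlyeq\prof{K}(\ln(y)).
\end{equation*}
Iterating this inequality $n$ times, and using that $\ln^{\circ k}(Cx)\simeq\ln^{\circ k}(x)$ for every integer $k\geq 1$ and every constant $C>0$ to reabsorb the multiplicative constants arising at each step inside the $\succcurlyeq$ relation, one obtains
\begin{equation*}
    \prof{L_{n}(G)}(x)\succcurlyeq\prof{G}(\ln^{\circ n}(x)).
\end{equation*}

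On the other hand, the existence of a regular map $\shufn{n}{H}\longrightarrow L_{n}(G)$ combined with Theorem~\ref{th:profilemonotonuous} yields $\prof{L_{n}(G)}(x)\preccurlyeq\prof{\shufn{n}{H}}(x)$, and hence
\begin{equation*}
    \prof{G}(\ln^{\circ n}(x))\preccurlyeq\prof{\shufn{n}{H}}(x).
\end{equation*}
It then remains to plug in the known asymptotics for $\prof{\shufn{n}{H}}$ and to invert the substitution $y=\ln^{\circ n}(x)$. In case~\ref{item:1shufintolamp}, Proposition~\ref{prop:profileofshufnofpolynomialgrowthgroups} gives $\prof{\shufn{n}{H}}(x)\simeq(\ln^{\circ n}(x)/\ln^{\circ(n+1)}(x))^{1/d}$, and setting $y=\ln^{\circ n}(x)$ so that $\ln(y)=\ln^{\circ(n+1)}(x)$ yields $\prof{G}(y)\preccurlyeq(y/\ln(y))^{1/d}$. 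Since $H$ has polynomial growth of degree $d$ one has $\prof{H}(y)\simeq y^{1/d}$, and the right-hand side rewrites as $\prof{H}(y)/(\ln(y))^{1/d}$, which is the desired bound. In case~\ref{item:2shufintolamp}, Proposition~\ref{prop:profilesofshufn} provides $\prof{\shufn{n}{H}}(x)\simeq\prof{H}(\ln^{\circ n}(x))$, and the same substitution yields $\prof{G}(y)\preccurlyeq\prof{H}(y)$.

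The only point requiring a bit of care is the substitution $y=\ln^{\circ n}(x)$ in the $\preccurlyeq$ relation, which amounts to checking that the multiplicative constants hidden in the definition of $\preccurlyeq$ are not blown up when passing $\ln^{\circ n}$ through them. This follows from the same asymptotic equivalence $\ln^{\circ k}(Cx)\simeq\ln^{\circ k}(x)$ for $k\geq 1$, already used to iterate Erschler's bound, together with the surjectivity of $x\mapsto \ln^{\circ n}(x)$ onto a neighbourhood of infinity.
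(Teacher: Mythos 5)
Your proof is correct and takes essentially the same route as the paper: iterate the unconditional upper-bound direction of Erschler's formula to bound $\prof{L_n(G)}$ from below by roughly $\prof{G}(\ln^{\circ n}(\cdot))$, combine with monotonicity of the profile under regular maps to get $\prof{G}(\ln^{\circ n}(x))\preccurlyeq\prof{\shufn{n}{H}}(x)$, plug in the known asymptotics of $\prof{\shufn{n}{H}}$, and invert via $y=\ln^{\circ n}(x)$. The one imprecision—present in the paper's terse proof as well—is that the iteration really yields $\prof{L_n(G)}(x)\succcurlyeq\prof{G}\bigl(c\,\ln^{\circ n}(x)\bigr)$ for some $c\in(0,1)$ (the multiplicative constant lands \emph{outside} the outermost logarithm, which is not of the form $\ln^{\circ n}(Cx)$), but this constant is harmlessly absorbed after the change of variable, using $\prof{H}(y)\simeq y^{1/d}$ in case \textit{(i)} and Assumption~$(\star)$ on $\prof{H}$ in case \textit{(ii)}.
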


\begin{proof}
In case~\textit{\ref{item:1shufintolamp}}, we get
\begin{equation*}
        \prof{G}(\ln^{\circ n}(x))\preccurlyeq\left(\frac{\ln^{\circ n}(x)}{\ln^{\circ (n+1)}(x)}\right)^{\frac{1}{d}}=\frac{\prof{H}(\ln^{\circ n}(x))}{\left(\ln^{\circ (n+1)}(x)\right)^{\frac{1}{d}}}.
\end{equation*}
Using Assumption~$(\star)$ for the logarithm and for $\prof{H}$, we have
\begin{equation*}
        \prof{G}(\ln^{\circ n}(x))=O\left(\frac{\prof{H}(\ln^{\circ n}(x))}{\left(\ln^{\circ (n+1)}(x)\right)^{\frac{1}{d}}}\right)
\end{equation*}
and the change of variable $x'=\ln^{\circ n}(x)$ gives the result. In case~\textit{\ref{item:2shufintolamp}}, we get rather 
\begin{equation*}
    \prof{G}(\ln^{\circ n}(x))=O\left(\prof{H}(\ln^{\circ n}(x))\right)
\end{equation*}
and we conclude similarly.
\end{proof}

We get the following consequence in the case of polynomial growth groups.

\begin{corollary}\label{cor:iteratedshufintoiteratedlamplighterPolynomial}
Let $d,k,n\ge 1$, and let $G$ and $H$ be finitely generated groups of polynomial growth with growth degrees $k$ and $d$ respectively. If there is a regular map 
\begin{equation*}
\shufn{n}{H}\longrightarrow \Z/2\Z\wr(\Z/2\Z\wr(\dots\wr(\Z/2\Z\wr G)))
\end{equation*}
then $d<k$, where the wreath product is iterated $n$ times. 
\end{corollary}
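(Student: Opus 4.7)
The plan is to invoke the previous corollary (Corollary~\ref{cor:iteratedshufintoiteratedlamplighter}, case~\textit{\ref{item:1shufintolamp}}) and then insert the known asymptotic behaviour of isoperimetric profiles of polynomial growth groups. Since $H$ has polynomial growth of degree $d$, Assumption~$(\star)$ is satisfied for $\prof{H}$ (as $\prof{H}(x)\simeq x^{1/d}$), so the hypothesis of that corollary applies, and we obtain
\begin{equation*}
    \prof{G}(x)\preccurlyeq \frac{\prof{H}(x)}{(\ln(x))^{1/d}}.
\end{equation*}

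Next, I would replace both sides by their explicit asymptotics: $\prof{G}(x)\simeq x^{1/k}$ and $\prof{H}(x)\simeq x^{1/d}$, yielding
\begin{equation*}
    x^{1/k}\preccurlyeq \frac{x^{1/d}}{(\ln(x))^{1/d}}.
\end{equation*}
Unwinding the definition of $\preccurlyeq$, this means there exists a constant $C>0$ such that
\begin{equation*}
    x^{1/k}\le C\cdot \frac{(Cx)^{1/d}}{(\ln(Cx))^{1/d}}
\end{equation*}
for all $x$ large enough.

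It then remains to derive the strict inequality $d<k$ from this. I would argue by contradiction: if $d\geq k$, then $1/d\leq 1/k$, so $x^{1/k-1/d}\geq 1$ for $x\ge 1$, and rearranging gives
\begin{equation*}
    (\ln(Cx))^{1/d}\le C\cdot C^{1/d}\cdot x^{1/d-1/k}\le C^{1+1/d},
\end{equation*}
which is absurd since $(\ln(Cx))^{1/d}\to +\infty$ as $x\to +\infty$. Hence $d<k$, as claimed. There is no serious obstacle here; the only subtle point is to be careful with the multiplicative constants hidden in $\simeq$ and $\preccurlyeq$ when passing from the asymptotic comparison to the strict inequality on the exponents, but this is straightforward once one writes out the definitions.
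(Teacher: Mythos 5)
Your proposal is correct and follows exactly the paper's argument: apply Corollary~\ref{cor:iteratedshufintoiteratedlamplighter}\textit{(i)}, substitute $\prof{G}(x)\simeq x^{1/k}$ and $\prof{H}(x)\simeq x^{1/d}$ to get $x^{1/k}\preccurlyeq (x/\ln(x))^{1/d}$, and conclude $d<k$. The only difference is that you spell out the final comparison of exponents (which the paper dismisses as "immediate"), and your contradiction argument there is sound.
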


Note that this statement cannot be reached with methods from~\cite{GT24a}, even for quasi-isometric or coarse embeddings, since the thick bigon property used in~\cite{GT24a} is not stable under iterations of lampshufflers.

\begin{proof}
By Corollary~\ref{cor:iteratedshufintoiteratedlamplighter}, we have $x^{\frac{1}{k}}\preccurlyeq\left(\frac{x}{\ln(x)}\right)^{\frac{1}{d}}$, which immediately implies $d<k$.
\end{proof}

Note that, if $G$ is a proper subgroup of $H$, then an iteration of Proposition~\ref{prop:A2} ensures that $\shufn{n}{H}$ contains $\Z/2\Z\wr(\Z/2\Z\wr(\dots(\Z/2\Z\wr G)))$ (iterated $n$ times) as a subgroup, and thus we get a regular map 
\begin{equation*}
    \Z/2\Z\wr(\Z/2\Z\wr(\dots\wr(\Z/2\Z\wr G))) \longrightarrow \shufn{n}{H}. 
\end{equation*}
\appendix\section{Lamplighter subgroups in lampshu{f}flers}\label{appendixA}

In the article, the strategy for getting optimal upper bounds on the isoperimetric profile of halo products is to find subgraphs that are quasi-isometric to lamplighter graphs. In this appendix, our aim is to prove that, under additional mild algebraic assumptions on the base group, we can find lamplighter \textit{subgroups} inside lampshufflers.

\smallskip

For other halo products, such as lampjugglers, lampdesigners and lampcloners, we already observed in Section~\ref{sec:halo} that they contain wreath products over the same base group as subgroups. 

\smallskip

Let us recall the following definition: we say that a non-decreasing map $h\colon\R_{+}\longrightarrow \R_{+}$ satisfies Assumption~$(\star)$ if 
\begin{equation*}
    \forall C>0,\; h(Cx)=O(h(x)). 
\end{equation*} 

\smallskip

The motivation for this strategy comes from the following result, due to Silva. In the upcoming result (Proposition~\ref{prop:A2}), we will use the main idea of its proof. 

\begin{proposition}[{\cite[Proposition~2.3]{Sil24}}]\label{prop:A1}
Let $H$ be an infinite non co-Hopfian group. Then, for any finite group $F$, $\shuf{H}$ has a subgroup isomorphic to $F\wr H$. 
\end{proposition}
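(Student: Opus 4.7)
\textbf{Proof plan for Proposition~\ref{prop:A1}.} The plan is to build the lamplighter $F\wr H$ inside $\shuf{H}$ by constructing its lamp group $\bigoplus_H F$ as a subgroup of $\fsym{H}$ whose structure is compatible with the conjugation action of $H$ on $\fsym{H}$. The starting point is that non co-Hopficity gives an injective, non-surjective endomorphism $\varphi \colon H \hookrightarrow H$, whose image $K \defeq \varphi(H)$ is a proper subgroup of $H$ isomorphic to $H$ itself. The copy of $H$ on the right of $F \wr H$ will sit inside $\shuf{H}$ as $\{(\mathrm{id},k) : k\in K\}$, via the isomorphism $\varphi \colon H \to K$.

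Next, I need enough room in $H/K$ to place a copy of $F$ at each lamp position. The index $[H:K]$ may be too small a priori, but iterating $\varphi$ replaces $K$ by $\varphi^n(H)$, whose index equals $[H:K]^n$, and hence can be made at least $|F|$. So, after replacing $\varphi$ by a power, I may assume $[H:K] \geq |F|$ and fix $|F|$ distinct right-coset representatives $t_1,\dots,t_{|F|}$ of $K$ in $H$. By Cayley's theorem, choose an injection $F \hookrightarrow \sym(\{t_1,\ldots,t_{|F|}\})$, $f\longmapsto \pi_f$. For every $k\in K$ and $f\in F$, define $\sigma_{k,f}\in\fsym{H}$ to be the bijection that permutes $\{kt_1,\ldots,kt_{|F|}\}$ via $\pi_f$ and fixes all other elements of $H$.

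The two verifications to carry out are as follows. First, for distinct $k,k'\in K$, the sets $\{kt_i\}$ and $\{k't_j\}$ lie in different cosets $Kt_i$ and $Kt_j$ when $i\neq j$, and are disjoint when $i=j$ by left-cancellation; hence $\mathrm{supp}(\sigma_{k,f})\cap \mathrm{supp}(\sigma_{k',f'})=\emptyset$, so these elements commute. Combined with the homomorphism property $\sigma_{k,f}\sigma_{k,f'}=\sigma_{k,ff'}$, this yields a subgroup of $\fsym{H}$ isomorphic to $\bigoplus_{K}F$. Second, a direct computation using the formula $(k\cdot\sigma)(x)=k\sigma(k^{-1}x)$ shows that
\begin{equation*}
    k\cdot\sigma_{k_0,f}=\sigma_{kk_0,f}
\end{equation*}
for every $k,k_0\in K$ and $f\in F$, which is precisely the shift action defining a wreath product. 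Consequently, the subgroup of $\shuf{H}$ generated by $\{(\sigma_{k,f},1_H) : k\in K, f\in F\}\cup \{(\mathrm{id},k) : k\in K\}$ is isomorphic to the semi-direct product $\bigoplus_{K}F \rtimes K = F\wr K \cong F\wr H$.

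The only potentially delicate point is the index condition $[H:K]\geq |F|$; this is the step which would fail for a single iteration of $\varphi$ if $[H:K]<|F|$, and it is resolved painlessly by iterating $\varphi$. Everything else (disjointness of supports, equivariance under conjugation, and compatibility of the embeddings of $F$ and $K$) is a straightforward bookkeeping exercise using the explicit definition of the $\sigma_{k,f}$'s.
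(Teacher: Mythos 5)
Your proof is correct and takes essentially the same approach as the paper's treatment of this material (Proposition~\ref{prop:A2}, which the paper describes as built on the main idea of Silva's proof of Proposition~\ref{prop:A1}): you build the lamp group inside $\fsym{H}$ out of permutations supported in the blocks $k\{t_1,\ldots,t_{|F|}\}$, $k\in K$, of a coset partition of $H$, check disjointness and $K$-equivariance, and pair this with the copy of $K\cong H$ on the second coordinate. The paper phrases the construction top-down and more generally---any proper subgroup $K\leqslant H$ gives $\fsym{[H:K]}\wr K\hookrightarrow\shuf{H}$---while your bottom-up construction is the same subgroup intersected with a Cayley copy of $F$ inside each block; the iteration you introduce to force $[H:\varphi^n(H)]=[H:K]^n\ge|F|$ is precisely the extra (correct and necessary) step needed to pass from that general statement to one with a prescribed finite $F$.
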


Recall that a group $H$ is \textit{co-Hopfian} if any injective morphism $H\longrightarrow H$ is also surjective. Equivalently, a group is co-Hopfian if it has no proper subgroup isomorphic to itself.

\smallskip

Many amenable groups are known to be non co-Hopfian, including:
\begin{itemize}
    \item $\Z^d$, $d\ge 1$, and more generally any finitely generated abelian group;
    \item Some torsion-free nilpotent groups, such as the Heisenberg group over the integers~\cite{Corn16};
    \item Solvable Baumslag-Solitar groups $\text{BS}(1,n)$, $n\ge 1$~\cite{NP11};
    \item Wreath products $N\wr G$ where at least one of the two groups is not co-Hopfian~\cite[Proposition~6.1]{BFF24};
    \item Houghton's groups $H_{n}$, $n\ge 2$~\cite[Theorem~7.1]{BCMR16};
    \item The Grigorchuk's group~\cite{Lys85}.
\end{itemize}
In the non-amenable side, they also include for instance non-abelian free groups~\cite[section III.22]{dlH00} or right-angled Artin groups~\cite{Cas16}.

\smallskip

Thus, using Theorem~\ref{th:profilemonotonuousSubgroup}, it directly follows that for $H$ an amenable and non co-Hopfian group, one has already
\begin{equation*}
    \profp{\shuf{H}}(x) \preccurlyeq \prof{H}(\ln(x))
\end{equation*}
when $\prof{H}$ satisfies Assumption~($\star$).

\smallskip

In fact, we can derive from Silva's proof the following more general result.

\begin{proposition}\label{prop:A2}
Let $H$ be a group. If $K$ is a proper subgroup of $H$, then $\shuf{H}$ contains a subgroup isomorphic to $\fsym{[H:K]}\wr K$. 
\end{proposition}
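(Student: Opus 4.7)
The plan is to exhibit a partition of $H$ indexed by $K$ such that each piece has cardinality $[H:K]$ and the left regular action of $K$ on itself permutes these pieces. Fix a right transversal $T\subset H$ for $K$, so that $H=\bigsqcup_{t\in T}Kt$, or equivalently, the map $K\times T\longrightarrow H$, $(k,t)\longmapsto kt$, is a bijection. For each $k\in K$, set $A_{k}\defeq kT$. Then $(A_{k})_{k\in K}$ partitions $H$, each piece satisfies $|A_{k}|=|T|=[H:K]$, and for any $k_{0},k\in K$ one has $k_{0}A_{k}=A_{k_{0}k}$.

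The next step is to define $F_{k}\defeq \fsym{A_{k}}$, viewed as a subgroup of $\fsym{H}$ by extending permutations by the identity outside of $A_{k}$. Since the supports $A_{k}$ are pairwise disjoint, the subgroups $F_{k}$ pairwise commute in $\fsym{H}$, so their internal direct sum $\bigoplus_{k\in K}F_{k}$ is a well-defined subgroup of $\fsym{H}$. Furthermore, the bijection $\phi_{k}\colon T\longrightarrow A_{k}$, $t\longmapsto kt$, induces by conjugation an isomorphism $\fsym{T}\overset{\sim}{\longrightarrow}F_{k}$.

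The main step, and the only one requiring a calculation, is to verify that the action $(k_{0}\cdot \sigma)(x)=k_{0}\sigma(k_{0}^{-1}x)$ of $H$ on $\fsym{H}$, restricted to $K$, stabilises $\bigoplus_{k\in K}F_{k}$ and permutes the factors as in the regular wreath product $\fsym{T}\wr K$. The inclusion $k_{0}\cdot F_{k}\subset F_{k_{0}k}$ follows from $\supp(k_{0}\cdot\sigma)=k_{0}\supp(\sigma)$ together with $k_{0}A_{k}=A_{k_{0}k}$, and the reverse inclusion is symmetric. To see that $k_{0}\cdot\colon F_{k}\longrightarrow F_{k_{0}k}$ corresponds, under the identifications with $\fsym{T}$ from the previous paragraph, to the identity of $\fsym{T}$, take $\sigma\in F_{k}$ and write $\sigma(kt)=kt'$ for some $t'\in T$; then
\begin{equation*}
(k_{0}\cdot\sigma)(k_{0}kt)=k_{0}\sigma(kt)=k_{0}kt',
\end{equation*}
so both $\phi_{k}^{-1}\sigma\phi_{k}$ and $\phi_{k_{0}k}^{-1}(k_{0}\cdot\sigma)\phi_{k_{0}k}$ are the permutation $t\longmapsto t'$ of $T$.

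Combining these observations, the subgroup $\bigoplus_{k\in K}F_{k}\rtimes K$ of $\shuf{H}=\fsym{H}\rtimes H$ is isomorphic to $\fsym{T}\wr K\cong \fsym{[H:K]}\wr K$, as announced. The only place where the hypothesis that $K$ is a proper subgroup is used is to ensure that $[H:K]\geq 2$, so that the statement is non-trivial.
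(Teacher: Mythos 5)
Your proof is correct and follows essentially the same route as the paper's: you partition $H$ into the $K$-translates $kT$ of a transversal, take the block-preserving finitely supported permutations together with $K$ in the base, and identify the result with $\fsym{[H:K]}\wr K$; the paper packages the same identification via the map $\sigma\longmapsto (k\mapsto (k^{-1}\cdot\sigma)|_{S})$. Your version makes the verification that left translation by $K$ permutes the factors $\fsym{A_k}$ regularly slightly more explicit, but the argument is the same.
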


\begin{proof}
Denote $[H:K]\defeq m \in \lbrace 2,3,\dots\rbrace\cup\lbrace \infty\rbrace$, and let $S\subset H$ be a set of representatives of $H/K$, so that $|S|=m$ and we have a partition 
\begin{equation*}
    H=\bigsqcup_{k\in K}kS.
\end{equation*}
Consider then 
\begin{equation*}
    G \defeq \big\lbrace (\sigma, h) \in\shuf{H} : h\in K, \;\sigma(k'S)=k'S\; \text{for all $k'\in K$}\big\rbrace.
\end{equation*}
It is not hard to check that $G$ is a subgroup of $\shuf{H}$, and given $\sigma\in\fsym{H}$ satisfying $\sigma(k'S)=k'S$ for all $k'\in K$, we can define a map 
\begin{align*}
    f_{\sigma}\colon
    \begin{array}{cll}    
    K &\longrightarrow &\fsym{m} \\
    k&\longmapsto &(k^{-1}\cdot\sigma)\big|_{S}
    \end{array}.
\end{align*}
Then, a direct computation shows that $f_{k\cdot \sigma}=k\cdot f_{\sigma}$ and $f_{\sigma\circ\tau}=f_{\sigma}f_{\tau}$ for any $k\in K$ and any $\sigma,\tau\in\fsym{H}$ satisfying $\sigma(k'S)=k'S$ for all $k'\in K$. This implies that the map 
\begin{align*}
    \begin{array}{ccl} G &\longrightarrow &\fsym{m}\wr K \\
    (\sigma, h)&\longmapsto &(f_{\sigma},h)
    \end{array}
\end{align*}
is a group isomorphism. This concludes the proof. 
\end{proof}

Therefore, one recovers the upper bound in Theorem~\ref{th:boundsForProfile intro} for lampshufflers.

\begin{corollary}\label{cor:A3}
Let $p\ge 1$. Let $H$ be an amenable finitely generated group with a finitely generated proper subgroup $K$ such that $\profp{H}(x)\simeq \profp{K}(x)$. If $\prof{H}$ satisfies Assumption~$(\star)$, then one has 
\begin{equation*}
    \profp{\shuf{H}}(x) \preccurlyeq \profp{H}(\ln(x)).
\end{equation*}
\end{corollary}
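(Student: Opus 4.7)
The plan is to apply Proposition~\ref{prop:A2} to exhibit a concrete wreath product subgroup of $\shuf{H}$, then invoke Corollary~\ref{cor:upperboundonProf} to bound its $\ell^p$-isoperimetric profile. Since $K$ is a proper subgroup of $H$, the index $[H:K]$ is at least $2$, so $\fsym{[H:K]}$ contains a non-trivial finite subgroup $F$ --- for instance $F=\langle\tau\rangle\cong\Z/2\Z$ generated by a single transposition. Proposition~\ref{prop:A2} then embeds $\fsym{[H:K]}\wr K$ as a subgroup of $\shuf{H}$, and restricting this embedding to $F\wr K$ yields a finitely generated subgroup of $\shuf{H}$. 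Theorem~\ref{th:profilemonotonuousSubgroup} consequently gives
\begin{equation*}
    \profp{\shuf{H}}(x)\preccurlyeq\profp{F\wr K}(x).
\end{equation*}

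Next, I would apply Corollary~\ref{cor:upperboundonProf} to the wreath product $F\wr K$, viewed as a halo product over $K$: it is naturally generated, large-scale commutative (with $D=0$), and has finite blocks since $F$ is finite. Provided $\prof{K}$ satisfies Assumption~$(\star)$, this yields
\begin{equation*}
    \profp{F\wr K}(x)\preccurlyeq\prof{K}(\ln(x)).
\end{equation*}
Finally, the hypothesis $\profp{H}\simeq\profp{K}$, combined with Assumption~$(\star)$ and Lemma~\ref{lem:profileandequivalents}, propagates the asymptotic equivalence through the logarithm so that $\prof{K}(\ln(x))\simeq\profp{H}(\ln(x))$, and the desired bound follows by concatenation.

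The hard part, rather than a genuine obstacle, lies in this last bookkeeping step: one must carefully verify that Assumption~$(\star)$ transfers appropriately between $\prof{H}$, $\prof{K}$ and the corresponding $\ell^p$-profiles under the hypothesis $\profp{H}\simeq\profp{K}$, and track asymptotic equivalences through the logarithm via Lemma~\ref{lem:profileandequivalents}. Once this technicality is handled, the proof is a straightforward chain of Proposition~\ref{prop:A2}, Theorem~\ref{th:profilemonotonuousSubgroup}, and Corollary~\ref{cor:upperboundonProf}, with no further computational content.
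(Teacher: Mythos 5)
Your overall route --- Proposition~\ref{prop:A2} to produce $F\wr K\leqslant\shuf{H}$ with $F$ finite and non-trivial, Theorem~\ref{th:profilemonotonuousSubgroup} to pass to this finitely generated subgroup, and Corollary~\ref{cor:upperboundonProf} (equivalently, Erschler's bound for lamplighters) applied to $F\wr K$ --- is exactly the argument the paper intends: the corollary is stated there without proof, as an immediate consequence of Proposition~\ref{prop:A2} and the earlier results.

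However, your final ``bookkeeping'' step does not close as written, and the obstacle is a direction-of-inequality problem rather than a routine verification. Your chain ends with $\profp{\shuf{H}}(x)\preccurlyeq\prof{K}(\ln(x))$, where the right-hand side is the $\ell^1$-profile of $K$; this is forced, since Corollary~\ref{cor:upperboundonProf} rests on Theorem~\ref{th:lowerboundonFol}, which bounds $\folp{F\wr K}$ from below only in terms of $\text{F\o l}_{K}=\text{F\o l}_{1,K}$. To replace $\prof{K}(\ln(x))$ by $\profp{H}(\ln(x))$ you would need $\prof{K}\preccurlyeq\profp{K}$, but Lemma~\ref{lem:MonotonuousProfile} gives only $\profp{K}\preccurlyeq\prof{K}$; the hypotheses therefore yield $\prof{K}(\ln(x))\succcurlyeq\profp{H}(\ln(x))$, which is the wrong direction, and Lemma~\ref{lem:profileandequivalents} cannot repair this. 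Likewise, Assumption~$(\star)$ is assumed for $\prof{H}$ but is needed for $\prof{K}$, and transferring it requires $\prof{H}\simeq\prof{K}$, which is among the hypotheses only when $p=1$. Both issues vanish for $p=1$; for $p>1$ what your argument actually establishes is $\profp{\shuf{H}}(x)\preccurlyeq\prof{H}(\ln(x))$ under the hypothesis $\prof{H}\simeq\prof{K}$, which is precisely the form of the bound appearing in Corollary~\ref{cor:upperboundonProf} and in the paragraph preceding Proposition~\ref{prop:A1}. The mismatch you are papering over is really an imprecision in the corollary's own statement (its mixing of $\ell^1$- and $\ell^p$-profiles), and an honest write-up should either weaken the conclusion to $\prof{H}(\ln(x))$ or add the hypothesis $\profp{H}\simeq\prof{H}$ (and $\profp{K}\simeq\prof{K}$).
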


The corollary applies of course to any non co-Hopfian group, but beyond, also to any finitely generated group $H$ having at least one finite-index subgroup $K$. One algebraic criterion to ensure the latter is to be \textit{non-perfect}.

\begin{definition}
Let $H$ be a group. We say that $H$ is \textit{perfect} if $[H,H]=H$, where $[H,H]$ is the commutator subgroup of $H$.
\end{definition}

Examples of perfect groups include finite alternating groups $A_{n}$ for $n\ge 5$ and linear groups $\text{SL}(n,K)$ for $n\ge 3$ and non-commutative fields $K$. Among the three Thompson's groups $F\subset T\subset V$, $T$ and $V$ are simple, thus perfect, as well as the commutator subgroup $[F,F]$ of $F$~\cite{CFP96}. 

\smallskip

As mentioned above, we are in fact more interested in groups that are not perfect, due to the next statement.

\begin{lemma}\label{lm:finiteindexsubgroupsinnonperfectgroups}
Let $H$ be a finitely generated group which is not perfect. Then $H$ has a proper finite-index subgroup.
\end{lemma}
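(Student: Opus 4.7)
The plan is to exploit the abelianization. Since $H$ is not perfect, the commutator subgroup $[H,H]$ is a proper normal subgroup of $H$, and the quotient $A \defeq H/[H,H]$ is a non-trivial abelian group. Because $H$ is finitely generated, so is $A$, and the structure theorem for finitely generated abelian groups yields $A \cong \Z^{r}\oplus T$ with $r\ge 0$ and $T$ a finite abelian group.

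Because $A$ is non-trivial, at least one of $r\ge 1$ or $T\neq\lbrace 1\rbrace$ holds. In both cases, $A$ admits a surjective morphism $\pi$ onto a non-trivial finite cyclic group: if $r\ge 1$, project onto one $\Z$-factor and then reduce modulo $2$; if $r=0$, project onto a non-trivial cyclic summand of $T$. Composing with the canonical projection $H\twoheadrightarrow A$, one obtains a surjective morphism $H\twoheadrightarrow \Z/n\Z$ for some $n\ge 2$.

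The kernel of this composition is then a proper subgroup of $H$ (it is proper because the morphism is non-trivial) of finite index (namely $n$). This gives the desired finite-index proper subgroup. The argument is entirely elementary; there is no real obstacle, the only thing to verify is that a non-trivial finitely generated abelian group always surjects onto a finite cyclic group, which is immediate from the structure theorem.
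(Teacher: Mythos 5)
Your proof is correct and follows essentially the same route as the paper's: pass to the abelianization $H/[H,H]$, observe it is a non-trivial finitely generated abelian group and hence has a proper finite-index subgroup, then pull back to $H$. You merely spell out the intermediate step (structure theorem and surjection onto a finite cyclic group) that the paper leaves implicit.
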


\begin{proof}
As $H$ is not perfect, $[H,H]$ is a proper subgroup and the quotient $H/[H,H]$ is a non-trivial abelian finitely generated group. It has therefore a proper finite-index subgroup, and lifting the latter provides a proper finite-index subgroup for $H$, that contains $[H,H]$.  
\end{proof}

On the amenable side, the class of non-perfect groups is huge. It includes for instance
\begin{itemize}
    \item All solvable groups, in particular nilpotent and polycyclic groups;
    \item Lamplighters, lampshufflers and lampjugglers over amenable non-perfect groups;
    \item More generally, any semi-direct product $N\rtimes Q$ where $N$ and $Q$ are amenable and $Q$ is non-perfect, as well as any subgroup of $N\rtimes Q$;
    \item Houghton's groups $H_{n}$, $n\ge 2$.
\end{itemize}

\smallskip 

Henceforth, for an amenable non-perfect group $H$, Corollary~\ref{cor:A3} directly provides an upper bound on the $\ell^p$-isoperimetric profiles of $\shuf{H}$.

\smallskip

We emphasize here that Corollary~\ref{cor:A3} can also cover cases that are not covered by Proposition~\ref{prop:A1}, since they are indeed examples of finitely generated torsion-free nilpotent groups that are co-Hopfian~\cite{Bel03}. 

\smallskip

It is also worth mentioning that there do exist amenable perfect groups. Such groups have been constructed by Juschenko and Monod in~\cite{JM13}, and are even simple. Thus Corollary~\ref{cor:A3} do not apply to them, and on the other hand it seems to be an open problem whether they are co-Hopfian or not; see the list of open questions in~\cite{Corn14}. Note also that we do not know the asymptotic behaviour of the isoperimetric profiles of these groups. 

\begin{remark}\label{rem:Preservation}
As pointed out above, lampshuffler groups over non-perfect groups are themselves non-perfect. This simply follows from the set inclusions 
\begin{equation*}
    [\shuf{H},\shuf{H}]\subset \fsym{H}\times [H,H] \subsetneq \shuf{H}.
\end{equation*}
In fact, it is true more generally that a lampshuffler $\shuf{H}$ over a non-trivial group $H$ is never perfect, regardless of the perfectness of $H$. Indeed, since the action of $H$ on $\fsym{H}$ preserves the parity of the permutations, the commutator subgroup of $\shuf{H}$ is contained in $\mathcal{A}(H)\times [H,H]$, where $\mathcal{A}(H)\subset\fsym{H}$ is the set of even permutations. Since $\mathcal{A}(H)$ is not equal to $\fsym{H}$ if $|H|\ge 3$, and since $[H,H]$ is trivial if $|H|=2$, we thus see that $\shuf{H}$ is not equal to its commutator subgroup. 
\end{remark}

Lastly, regarding lampshufflers, non co-Hopficity can also be used to deduce the isoperimetric profile of iterated lampshufflers, since it is stable under iteration of lampshufflers:

\begin{proposition}\label{prop:A7}
If $H$ is not co-Hopfian, then $\shuf{H}$ is not co-Hopfian. 
\end{proposition}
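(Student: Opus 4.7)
Since $H$ is not co-Hopfian, fix an injective morphism $\varphi\colon H\longrightarrow H$ whose image $K\defeq\varphi(H)$ is a proper subgroup of $H$. My plan is to build, out of $\varphi$, an injective non-surjective endomorphism of $\shuf{H}$ by combining (i) an isomorphism $\shuf{H}\xrightarrow{\sim}\shuf{K}$ induced by $\varphi$ and (ii) a canonical embedding $\shuf{K}\hookrightarrow\shuf{H}$ obtained by extending finitely supported permutations of $K$ by the identity on $H\setminus K$.

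For step (i), consider the map
\begin{equation*}
\Phi\colon\shuf{H}\longrightarrow\shuf{K},\quad (\sigma,h)\longmapsto(\varphi\circ\sigma\circ\varphi^{-1},\varphi(h)),
\end{equation*}
where $\varphi^{-1}$ denotes the inverse of $\varphi$ viewed as a bijection $H\to K$. This is well-defined because $\varphi\sigma\varphi^{-1}$ is a finitely supported permutation of $K$, and a short computation using that $\varphi$ is a group morphism shows that $\Phi$ is compatible with both semidirect product laws (the key identity being $\varphi(h_1\cdot\sigma_2)\varphi^{-1}=\varphi(h_1)\cdot(\varphi\sigma_2\varphi^{-1})$ inside $\fsym{K}$). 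Since $\varphi$ is a bijection onto $K$, $\Phi$ is a group isomorphism.

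For step (ii), extend permutations by the identity outside $K$: for $\tau\in\fsym{K}$, let $\tilde\tau\in\fsym{H}$ agree with $\tau$ on $K$ and with the identity on $H\setminus K$. The map $\tau\longmapsto\tilde\tau$ is an injective group homomorphism $\fsym{K}\hookrightarrow\fsym{H}$. The essential observation is that, since $K$ is a subgroup, $K$ preserves the partition $H=K\sqcup(H\setminus K)$ by left multiplication; hence for $k\in K$ and $\tau\in\fsym{K}$ one checks directly that $k\cdot_H\tilde\tau=\widetilde{k\cdot_K\tau}$, both actions being defined by conjugation. Consequently,
\begin{equation*}
\iota\colon\shuf{K}\longrightarrow\shuf{H},\quad(\tau,k)\longmapsto(\tilde\tau,k)
\end{equation*}
is an injective group homomorphism.

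The composition $\iota\circ\Phi\colon\shuf{H}\longrightarrow\shuf{H}$ is then an injective endomorphism of $\shuf{H}$, with image contained in $\lbrace(\sigma,h)\in\shuf{H}:\supp(\sigma)\subset K,\ h\in K\rbrace$. Picking any $h_0\in H\setminus K$, the element $(\mathrm{id},h_0)\in\shuf{H}$ does not belong to this image, so $\iota\circ\Phi$ is not surjective. Therefore $\shuf{H}$ is not co-Hopfian. No real obstacle arises: the only verification requiring care is the equivariance of the extension-by-identity map and the compatibility of $\Phi$ with the semidirect product structure, both of which are straightforward from the fact that $K\leq H$ and that $\varphi$ is a group homomorphism.
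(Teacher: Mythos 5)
Your proof is correct and takes essentially the same approach as the paper: the paper directly defines the map $(\sigma,g)\longmapsto(\overline{\sigma},\psi(g))$ with $\overline{\sigma}$ equal to "$\psi\sigma\psi^{-1}$ on $\psi(H)$, identity elsewhere," which is exactly your composition $\iota\circ\Phi$ written in a single formula. Your decomposition into the isomorphism $\shuf{H}\xrightarrow{\sim}\shuf{K}$ followed by the extension-by-identity embedding $\shuf{K}\hookrightarrow\shuf{H}$ is a slightly more modular presentation of the same construction, and the equivariance check you single out ($k\cdot_H\tilde\tau=\widetilde{k\cdot_K\tau}$ for $k\in K$) corresponds to the paper's property (ii).
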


\begin{proof}
If $H$ is not co-Hopfian, fix an injective morphism $\psi\colon H\longrightarrow H$ which is not surjective, and define the map 
\begin{align*}
\varphi\colon
\begin{array}{cll}
\shuf{H}&\longrightarrow &\shuf{H} \\
(\sigma, g)&\longmapsto &(\overline{\sigma}, \psi(g))
\end{array}
\end{align*}
where, for any $\sigma\in \fsym{H}$, $\overline{\sigma}\in\fsym{H}$ is defined as 
\begin{align*}
    \overline{\sigma}\colon
    \begin{array}{cll}
    H&\longrightarrow &H \\
    g&\longmapsto &\begin{cases}\psi(\sigma(\psi^{-1}(g))) &\mbox{if $g$ is in the image of $\psi$} \\ g &\mbox{otherwise}\end{cases}
    \end{array}.
\end{align*}
Then one checks directly that the correspondence $\sigma\longmapsto\overline{\sigma}$ is well-defined and that the following two properties hold:
\begin{enumerate}[label=(\roman*)]
    \item $\overline{\sigma\circ \tau}=\overline{\sigma}\circ\overline{\tau}$ for any $\sigma,\tau\in\fsym{H}$;
    \item $\overline{p\cdot\sigma}=\psi(p)\cdot\overline{\sigma}$ for any $p\in H$ and $\sigma\in \fsym{H}$.
\end{enumerate}
These two points imply that $\varphi$ is a morphism, which is injective since $\psi$ is, and which is not surjective since $\psi$ is not.
\end{proof}

Going further, it is also natural to ask whether these algebraic assumptions are preserved under iteration of other halo products. First, it is immediate that a halo product over a non-perfect group is not perfect, since the definition of the product law of $\halo H$ directly gives
\begin{equation*}
    [\halo H,\halo H]\subset L(H)\times [H,H] \subsetneq \halo H,
\end{equation*}
as we pointed out in Remark~\ref{rem:Preservation}, in the special case of lampshufflers. In this remark, we also pointed out that lampshufflers were in fact never perfect, using the parity of the permutations, which is invariant by the action of the base group. Thus, for a general halo group $\halo H$, it is enough to find a non-trivial morphism from $L(H)$ to an abelian group, which is invariant by the action of the base group. For instance, for lampcloners, we can use the determinant of linear maps.

\smallskip

Regarding preservation of non-co-hopficity under iterations of halo products, we can only find proofs in concrete examples. For lampcloners, we can adapt the above proof for lampshufflers. Indeed, let us first note that a group morphism $\psi\colon H\longrightarrow H$ gives rise to a linear automorphism $\tilde{\psi}\colon V_{H}\longrightarrow V_{H}$, by permuting the vector of the canonical basis given by $H$, and if $\psi$ is injective but not bijective, then so is $\tilde{\psi}$. Finally, it remains to
define the map 
\begin{align*}
\varphi\colon
\begin{array}{cll}
\cloner{H}&\longrightarrow &\cloner{H} \\
(\sigma, g)&\longmapsto &(\overline{\sigma}, \psi(g))
\end{array}
\end{align*}
where, for any $\sigma\in \text{FGL}(H)$, $\overline{\sigma}\in \text{FGL}(H)$ is defined as 
\begin{align*}
    \overline{\sigma}\colon
    \begin{array}{cll}
    V_H&\longrightarrow &V_H \\
    v&\longmapsto &\begin{cases}\tilde{\psi}(\sigma(\tilde{\psi}^{-1}(v))) &\mbox{if $v\in V_{H}$ is in the image of $\tilde{\psi}$} \\ v &\mbox{otherwise}\end{cases}
    \end{array},
\end{align*}
and to check that $\varphi$ an injective, but not bijective, group morphism.

\printbibliography

{\bigskip
		\footnotesize
		
		\noindent C.~Correia, \textsc{Université Paris Cité, Institut de Mathématiques de Jussieu-Paris Rive Gauche, 75013 Paris, France}\par\nopagebreak\noindent
		\textit{E-mail address: }\texttt{corentin.correia@imj-prg.fr}}

{\bigskip
		\footnotesize
		
		\noindent V.~Dumoncel, \textsc{Université Paris Cité, Institut de Mathématiques de Jussieu-Paris Rive Gauche, 75013 Paris, France}\par\nopagebreak\noindent
		\textit{E-mail address: }\texttt{vincent.dumoncel@imj-prg.fr}}

\end{titlepage}
\end{document}